\newtheorem{theo}{Theorem}[section]
\newtheorem{prop}[theo]{Proposition}
\newtheorem{defi}{Definition}[section]
\newtheorem{lemme}[theo]{Lemma}
\newtheorem{corol}[theo]{Corollary}
\theoremstyle{remark}
\newtheorem{remark}[theo]{Remark}
\newcommand{\be}{\begin{equation*}}
\newcommand{\ee}{\end{equation*}}
\newcommand{\ben}{\begin{equation}}
\newcommand{\een}{\end{equation}}
\newcommand{\bal}{\begin{aligned}}
\newcommand{\eal}{\end{aligned}}
\newcommand{\pui}{\frac{n-2}{2}}
\newcommand{\ue }{u_\ve }
\newcommand{\xe}{x_\ve}
\newcommand{\he}{h_\ve}
\newcommand{\me}{\mu_\ve}
\newcommand{\R}{\mathbb{R}}
\newcommand{\rn}{\mathbb{R}^n}
\newcommand{\rr}{\mathbb{R}}
\newcommand{\sph}{\mathbb{S}}
\newcommand{\ve}{\varepsilon}
\newcommand{\vp}{\varphi}
\newcommand{\pe}{p_{\varepsilon}}
\def \crit{2^\star}
\numberwithin{equation}{section}
\title[]{One-bubble nodal blow-up for asymptotically critical stationary Schr\"odinger-type equations}
\author{Bruno Premoselli}
\address{Bruno Premoselli, Universit\'e Libre de Bruxelles, Service d'analyse, CP 218, Boulevard du Triomphe, B-1050 Bruxelles, Belgique.}
\email{bruno.premoselli@ulb.be}
\author{Fr\'ed\'eric Robert}
\address{Fr\'ed\'eric Robert, Universit\'e de Lorraine, CNRS, IECL, F-54000 Nancy, France}
\email{frederic.robert@univ-lorraine.fr}
\thanks{The first author was supported by the FNRS CdR grant J.0135.19, the Fonds Th\'elam and an ARC Avanc\'e 2020 grant.} 
\date{April 24th, 2024}
\begin{document}

\begin{abstract}
We investigate in this work families $(u_\ve)_{\ve >0}$ of sign-changing blowing-up solutions of asymptotically critical stationary nonlinear Schr\"odinger  equations of the following type:
$$\Delta_g u_\ve + h_\ve u_\ve = |u_{\ve}|^{p_\ve-2} u_\ve $$
in a closed manifold $(M,g)$, where $h_\ve$ converges in $C^1(M)$. Assuming that $(u_\ve)_{\ve >0}$ blows-up as \emph{a single sign-changing bubble}, we obtain necessary conditions for blow-up that constrain the localisation of blow-up points and exhibit a strong interaction between $h$, the geometry of $(M,g)$ and the bubble itself. These conditions are new and are a consequence of the sign-changing nature of $u_\ve$. \end{abstract}

\maketitle

\section{Introduction}

\subsection{Statement of the main results}

Let $(M^n,g), n \ge 3$ be a smooth, connected and closed manifold, where closed means compact without boundary. We study in this paper sign-changing solutions $u\in C^2(M)$ of the   equation
\ben \label{Intro:eq0}
\Delta_g u  + h u  = |u |^{p -2} u\hbox{ in }M
\een
where $\Delta_g = - \text{div}_g(\nabla \cdot)$ is the Laplace-Beltrami operator,  $h\in C^1(M)$ and $2<p\leq \crit$ with $\crit  = \frac{2n}{n-2}$. When $h\equiv \frac{n-2}{4(n-1)} S_g$, where $S_g$ is the scalar curvature of $(M,g)$, $p= \crit$ and $u>0$, \eqref{Intro:eq0} is the celebrated Yamabe equation. We let $H^1(M)$ be the completion of $C^\infty(M)$ for $u\mapsto \Vert u\Vert_{H^1}:=\Vert u\Vert_2+\Vert\nabla u\Vert_2$. It has been known since the seminal work of Struwe \cite{Struwe}  that families of solutions to \eqref{Intro:eq0} that are uniformly bounded in $H^1(M)$ may develop concentration phenomena in the form of ``bubbles" (see also Druet-Hebey-Robert \cite{DruetHebeyRobert} for the case of a compact Riemannian manifold). These ``bubbles'' correspond to a loss of compactness of the solutions, and understanding the conditions in which they may appear and their mutual interactions has been the subject of many works in the last decades. 

\smallskip\noindent Despite the abundance of contributions for positive solutions of \eqref{Intro:eq0}, the literature for sign-changing solutions is less developed. In the present work we take a step in this direction and study sign-changing solutions of \eqref{Intro:eq0} that blow-up as \emph{a single sign-changing bubble}. In the sequel, we fix $r_g\in (0, i_g(M))$ where $i_g(M)>0$ is the injectivity radius of $(M,g)$. Throughout this paper $\chi \in C^\infty_c(\R^n)$ will denote a cutoff function such that $\chi(t)=1$ if $|t|\leq r_g/2$ and $\chi(t)=0$ if $|t|\geq r_g$. 
We say that a family $(B_\ve)_{\ve >0}$ of functions in $H^1(M)$ is a family of bubbles if there exists a non-zero solution $V$ to
\ben \label{yamabe:intro}
\Delta_{\xi} V = |V|^{\crit -2} V  \text{ in } \R^n , \quad  V\in  D^{1,2}(\rn),
\een
where $D^{1,2}(\rn)$ is the completion of $C^\infty_c(\rn)$ for the norm $u \mapsto \Vert\nabla u \Vert_2$,
and families of points $(x_\ve)_{\ve>0}$ in $M$ and of positive real numbers $(\mu_\ve)_{\ve >0}$ such that 
$$B_\ve(x) -  \chi(d_g(\cdot,\xe ))\me^{-\frac{n-2}{2}}V\left(\frac{\exp_{x_\ve}^{-1}(\cdot)}{\me}\right)  \to 0 \text{ in } H^1(M) $$
as $\ve \to 0$. Here $\exp_{x_\ve}$ is the exponential chart at $x_\ve$ and $\xi$ is the Euclidean metric, so that $\Delta_\xi = - \sum_{i=1}^n \partial_i^2$. We will say that such a family $(B_\ve)_{\ve >0}$ is centered at $x_\ve$ and of radius $\mu_\ve$ and modeled on $V$, and $V$ itself will sometimes be referred to as the bubble. A more precise definition is given in Definition \ref{def:bubble} and properties of such families are investigated in Section \ref{sec:weyl} below. All the solutions considered in this paper are allowed to change sign unless mentioned otherwise. 

\medskip\noindent  Our main results establish a necessary condition for the existence of blowing-up solutions of \eqref{Intro:eq0} that blow-up with a single sign-changing bubble. We introduce one last notation as follows: if $B = (B_\ve)_{\ve >0}$ is a family of bubbles centered at $x_\ve$, of radius $\mu_\ve$ and modeled on $V$,  
we define 
$$ \begin{aligned}
\hbox{Weyl}_g\otimes B&:= \frac{4n}{3(n-2)^2}  \left( \int_{\R^n} |V|^{\crit }dx \right)^{-1}\times \\
 &\hbox{Weyl}_g(x_0)_{i\alpha j\beta}    \int_{\rn}x^\alpha x^\beta \partial_{ij}^2 V \left(\frac{n-2}{2}V+x^l\partial_l V\right)\, dx
\end{aligned} $$
where $x_0 = \lim_{\ve \to 0}  x_\ve$. We first state a result when $n \ge 5$.

 \begin{theo} \label{maintheo}
Let $(M^n,g), n \ge 5$ be a smooth, connected and closed manifold, let $h, (h_\ve)_{0 < \ve \le 1}\in C^1(M)$ such that $\lim_{\ve\to 0}h_\ve=h$ in $C^1(M)$ and $\Delta_g + h$ is coercive. Let $(p_\ve)_{0< \ve \le 1}\in (2, \crit]$ be such that $\lim_{\ve \to 0} p_\ve = \crit $. Let  $(u_\ve)_{0< \ve \le 1}$ be a family of solutions of 
\ben \label{Intro:eq1}
\Delta_g u_\ve + h_\ve u_\ve = |u_{\ve}|^{p_\ve-2} u_\ve \quad \text{ in } M.
\een
We assume that $(u_\ve)_{0< \ve \le 1}$ satisfies 
\begin{equation}\label{one:bubble}
u_\ve =cB_\ve+o(1)\hbox{ in }H^1(M)
\end{equation}
for some $c>0$, where $B=(B_\ve)_{\ve>0}$ is a bubble centered at $(\xe)_\ve\in M$ and with radius $(\me)_\ve\in (0,+\infty)$ and modeled on $V\in D^{1,2}(\rn)$ as in Definition \ref{def:bubble} below. 

\medskip

\noindent Then $c=1$, $\lim_{\ve\to 0}\me^{\crit-p_\ve}=1$, the following limit exists:
$$ \Lambda := \lim_{\ve\to0} \frac{2^*-p_\ve}{\mu_\ve^2} \ge 0,$$
 and $h$ satisfies: 
\begin{equation}\label{condnecessaire:5}
h(x_0) - \frac{n-2}{4(n-1)} S_g(x_0) = \frac{(n-2)^2}{4n}  \frac{   \int_{\R^n} |V|^{\crit }dx}{\int_{\R^n} V^2dx } \Big(\Lambda - \hbox{Weyl}_g\otimes B \Big),
\end{equation}
where we have let $x_0:=\lim_{\ve\to 0}\xe$. 
 \end{theo}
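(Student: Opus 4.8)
The plan is to run a pointwise blow-up analysis of the family $(u_\ve)$ near $\xe$ at the scale $\me$, and then extract the necessary condition \eqref{condnecessaire:5} from a Pohozaev-type identity on small balls. First I would use \eqref{one:bubble} together with the classification of the standard bubble (equation \eqref{yamabe:intro}) and elliptic regularity to upgrade the $H^1$-convergence to a $C^1_{loc}$ convergence of the rescaled functions $\tilde u_\ve(x) := \me^{\frac{n-2}{2}} u_\ve(\exp_{\xe}(\me x))$ to $cV$ on compact subsets of $\rn$. Testing \eqref{Intro:eq1} against $u_\ve$ and comparing energies (using that $\Vert B_\ve\Vert_{H^1}^2\to \Vert \nabla V\Vert_2^2$ and $\Vert B_\ve\Vert_{\crit}^{\crit}\to \Vert V\Vert_{\crit}^{\crit}$) forces $c^{\crit}=c^2$, hence $c=1$; the relation $\lim_\ve \me^{\crit - \pe}=1$ comes from matching the exponents in the rescaled equation, since $\me^{\frac{n-2}{2}(\pe-\crit)} = \me^{-\frac{n-2}{2}(\crit-\pe)}$ must converge to keep the nonlinearity $\me^{\frac{n-2}{2}\pe} |u_\ve|^{\pe-2}u_\ve$ rescaled consistently with $|V|^{\crit-2}V$.

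Next I would obtain the sharp pointwise bound $|u_\ve(x)| \le C\left(\frac{\me}{\me^2 + d_g(x,\xe)^2}\right)^{\frac{n-2}{2}}$ on $M$ (the standard one-bubble estimate; this is presumably established in the section on properties of bubbles referenced in the excerpt, or follows from a Green's representation argument bootstrapped from the $C^1_{loc}$ convergence). With this in hand, the core of the proof is a Pohozaev identity: multiply \eqref{Intro:eq1} by $x^k\partial_k u_\ve + \frac{n-2}{2}u_\ve$ (in geodesic normal coordinates at $\xe$, with the Euclidean vector field) and integrate over a geodesic ball $B_{\delta}(\xe)$ of fixed small radius $\delta$. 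The bulk terms produce, after expanding the metric $g$ in normal coordinates up to the order involving the Weyl tensor (this is where $S_g(x_0)$ and $\hbox{Weyl}_g(x_0)$ enter; one needs the expansion $\sqrt{|g|} = 1 - \frac{1}{6}\mathrm{Ric}_{ij}x^ix^j + \dots$ and the analogous expansion of $g^{ij}$ whose second-order term carries the curvature): a term proportional to $\int_M h_\ve u_\ve^2$, a term proportional to $\int_M (\crit - \pe)|u_\ve|^{\pe}$, a geometric term that after rescaling converges to $\hbox{Weyl}_g\otimes B$ against the measure $\int V^2\,dx$ (using the cross term $\int x^\alpha x^\beta \partial_{ij}^2 V(\frac{n-2}{2}V + x^l\partial_l V)$ exactly as it appears in the definition of $\hbox{Weyl}_g\otimes B$), and a boundary term on $\partial B_\delta(\xe)$ which, by the pointwise bound, is $O(\me^{n-2}) = o(\me^2)$ and hence negligible after division by $\me^2$.

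The key bookkeeping step is the rescaling: each integral over $M$ localizes near $\xe$, and substituting $x = \exp_{\xe}(\me y)$ turns $\int_M h_\ve u_\ve^2 \,dv_g \sim \me^2 h(x_0)\int_{\rn} V^2\,dy$ (the leading-order term; the scalar-curvature correction comes from the next term in $h_\ve(\exp_{\xe}(\me y))$ combined with the volume element, or more precisely from treating the conformal Laplacian and absorbing $\frac{n-2}{4(n-1)}S_g$ into the linear potential), while $\int_M (\crit-\pe)|u_\ve|^{\pe}\,dv_g \sim (\crit - \pe)\int_{\rn}|V|^{\crit}\,dy$, which after division by $\me^2$ produces $\Lambda \int_{\rn}|V|^{\crit}\,dy$ and forces $\Lambda \ge 0$ to exist as a limit (nonnegativity because $\pe \le \crit$). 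Collecting all terms, dividing by $\me^2$, and letting $\ve\to0$ yields precisely \eqref{condnecessaire:5} after rearranging the constants $\frac{(n-2)^2}{4n}$ and the ratio $\int|V|^{\crit}/\int V^2$.

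The main obstacle I anticipate is twofold. First, controlling the error terms in the Pohozaev identity at the correct order: one must show that \emph{all} contributions that are not $h(x_0)$, $S_g(x_0)$, $\Lambda$, or $\hbox{Weyl}_g\otimes B$ are $o(\me^2)$ after integration, which requires the sharp pointwise bound \emph{and} its gradient version, plus a careful Taylor expansion of the metric to the order at which the Weyl tensor (rather than just Ricci or scalar curvature) is the first non-vanishing obstruction — this is the precise point where the sign-changing nature of $V$ matters, since for the positive Aubin–Talenti bubble the analogous curvature integral vanishes by symmetry but for a general sign-changing $V$ it need not. Second, justifying that the cross term $\int_{\rn} x^\alpha x^\beta \partial_{ij}^2 V(\frac{n-2}{2}V + x^l\partial_l V)\,dx$ is the exact limit of the rescaled geometric term requires integrability of $V$ and its derivatives against the weights $|x|^2$, which holds for a single bubble $V \in D^{1,2}(\rn)$ solving \eqref{yamabe:intro} by the decay $|V(x)| = O(|x|^{2-n})$ at infinity (so $|x|^2|V|\,|\nabla^2 V| = O(|x|^{2-2n+2 \cdot \text{something}})$ — one checks the exponent closes, i.e. $n > 4$, which is exactly why this theorem is stated for $n \ge 5$). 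Handling these two points rigorously, rather than the formal Pohozaev computation itself, is where the real work lies.
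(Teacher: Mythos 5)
Your overall strategy — rescaled blow-up analysis, sharp pointwise estimates, then a Pohozaev identity on a small ball — matches the paper's. But there is a genuine gap in your argument for $c=1$ and for $\lim_{\ve\to 0}\me^{\crit-p_\ve}=1$, and it affects the logical flow of the whole proof.

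You claim that testing the equation against $u_\ve$ forces $c^{\crit}=c^2$. Trace the computation: testing gives $\|\nabla u_\ve\|_2^2+\int h_\ve u_\ve^2=\|u_\ve\|_{p_\ve}^{p_\ve}$, and the left side tends to $c^2\|\nabla V\|_2^2$. For the right side, however, the rescaling $y=\exp_{x_\ve}^{-1}(x)/\me$ yields a factor $\me^{\,n-\frac{n-2}{2}p_\ve}=\me^{\frac{n-2}{2}(\crit-p_\ve)}$, so $\|u_\ve\|_{p_\ve}^{p_\ve}\to a\,c^{\crit}\|V\|_{\crit}^{\crit}$, where $a:=\lim_\ve\me^{\frac{n-2}{2}(\crit-p_\ve)}\in(0,1]$. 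Since $\|\nabla V\|_2^2=\|V\|_{\crit}^{\crit}$, the energy balance gives $c^2=a\,c^{\crit}$, i.e.\ $c=a^{-1/(\crit-2)}$, which need not equal $1$. The only way to conclude $c=1$ is to first prove $a=1$, i.e.\ $\lim_\ve\me^{\crit-p_\ve}=1$; but your justification of that limit — ``matching the exponents in the rescaled equation, since $\me^{\frac{n-2}{2}(p_\ve-\crit)}$ must converge'' — only tells you that $a$ exists and lies in $(0,1]$, not that $a=1$. The two claims are thus mutually dependent and neither is established. What is actually needed is a quantitative rate: in the paper this is the intermediate Pohozaev estimate $\crit-p_\ve=O(\me^2)$ (for $n\ge 5$), which gives $\me^{\crit-p_\ve}\to 1$ immediately and hence $c=1$. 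That preliminary Pohozaev step is a prerequisite for the existence of $\Lambda=\lim(\crit-p_\ve)/\me^2$ as well, so it cannot be skipped or folded into the final identity without making the argument circular. You need to insert this coarse rate estimate before you can speak of $\Lambda$ or assert $c=1$.

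Secondary, non-fatal difference: you work in geodesic normal coordinates and plan to track the full Riemann-tensor term $\frac13 R_{ikjl}x^kx^l\partial^2_{ij}$ plus the $-\frac16 R_{ij}x^ix^j$ correction in $\sqrt{|g|}$, combining the scalar-curvature pieces with $h$ by hand. The paper instead passes to conformal normal coordinates (Lee--Parker) and the conformally transformed unknown $v_\ve=(u_\ve/\Lambda_{x_\ve})\circ\exp_{x_\ve}^{g_{x_\ve}}$: this forces $\mathrm{Ric}_{g_{x_\ve}}(x_\ve)=0$, $S_{g_{x_\ve}}(x_\ve)=0$ and $\sqrt{|g_{x_\ve}|}=1+O(|x|^N)$, so the curvature contribution at the relevant order is \emph{exactly} the Weyl tensor, and the conformal invariance of the conformal Laplacian delivers the combination $h-\frac{n-2}{4(n-1)}S_g$ directly inside the modified potential $\hat h_\ve$. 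Your route is viable but requires verifying several additional cancellations among Ricci and scalar curvature terms before the Weyl term survives as the only curvature contribution; for sign-changing $V$ these cancellations are not automatic and have to be checked term by term. Your analysis of the decay requirement ($n\ge 5$ to integrate $x^\alpha x^\beta\partial^2_{ij}V(\tfrac{n-2}{2}V+x^l\partial_lV)$) is correct, and you also correctly identify that the sharp gradient estimate is needed alongside the pointwise bound.
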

The new term $\hbox{Weyl}_g \otimes B$ appearing in \eqref{condnecessaire:5} is well-defined when $n \ge 5$ and we prove in Section \ref{sec:weyl} that it does not depend on the representative chosen for the family $(B_\ve)_{\ve >0}$ appearing in \eqref{one:bubble} (see Proposition \ref{prop:unique:bubble} below). 

\medskip\noindent In dimensions $3$ and $4$ we prove the following analogue of Theorem \ref{maintheo}: 

 \begin{theo} \label{maintheo2}
Let $(M^n,g), n =3,4$ be a smooth, connected and closed manifold, let $h, (h_\ve)_{0 < \ve \le 1},(p_\ve)_{0< \ve \le 1}\in (2, \crit]$ be as in the statement of Theorem \ref{maintheo}. Let  $(u_\ve)_{0< \ve \le 1}$ be a family of solutions of \eqref{Intro:eq1} satisfying again \eqref{one:bubble} for some $c>0$, where $B=(B_\ve)_{\ve>0}$ is a bubble centered at $(\xe)_\ve\in M$ and with radius $(\me)_\ve\in (0,+\infty)$ and modeled on $V\in D^{1,2}(\rn)$ as in Definition \ref{def:bubble} below. 

\medskip \noindent Then $c=1$, $\lim_{\ve\to 0}\me^{\crit-p_\ve}=1$, the following limits exist:
$$\Lambda : = \lim_{\ve \to 0} \left \{\begin{aligned} & \frac{6-p_\ve}{\mu_\ve} & n=3 \\ & \frac{4 - p_\ve}{\me^2 \ln \frac{1}{\me}}  & n=4 \end{aligned} \right \} \ge 0 , $$
and $h$ satisfies
\begin{equation}\label{condnecessaire:4}
 \frac{\left(\int_{\R^4}|V|^{2}V\, dx\right)^2}{\omega_3  \int_{\R^4}|V|^{4}\, dx} \left(  h(x_0) - \frac{1}{6} S_g(x_0) \right) = \Lambda \hbox{ when }n=4
\end{equation}
\begin{equation}\label{condnecessaire:3}
 -6 \frac{\left( \int_{\R^3}|V|^{4}V\, dx\right)^2}{ \int_{\R^3}|V|^{6}\, dx} m_{h}(x_0) = \Lambda \hbox{ when }n=3,
\end{equation}
where as before we have let $x_0:=\lim_{\ve\to 0}\xe$. In \eqref{condnecessaire:4} $\omega_3$ is the area of the $3$-sphere and in  \eqref{condnecessaire:3} $m_h(x_0)\in\rr$ is the mass of the Green's function of $\Delta_g +h$ at $x_0\in M$ (see Definition \ref{def:mass} below). 
 \end{theo}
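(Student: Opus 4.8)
The plan is to derive every conclusion from a single Pohozaev-type identity for $u_\ve$ on a small geodesic ball $B_\delta(\xe)$, the required sharpness being supplied by the pointwise estimates for one-bubble families established in the earlier sections. First I would run the standard $H^1$-blow-up analysis: from \eqref{one:bubble} and elliptic regularity applied to the rescaled functions $\tilde u_\ve(y):=\me^{\pui}u_\ve(\exp_{\xe}(\me y))$, which solve $\Delta_{g_\ve}\tilde u_\ve+\me^2 h_\ve\tilde u_\ve=\me^{\pui(\crit-p_\ve)}|\tilde u_\ve|^{p_\ve-2}\tilde u_\ve$ with $g_\ve:=(\exp_{\xe}^{*}g)(\me\,\cdot)\to\xi$, one obtains $\me\to0$, $\xe\to x_0$ up to a subsequence, and $\tilde u_\ve\to cV$ in $C^2_{\mathrm{loc}}(\rn)$, with $V$ the profile of \eqref{one:bubble}. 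Note that $\me^{\crit-p_\ve}$ stays in $(0,1]$, a subsequential limit $0$ being impossible since it would force $cV\equiv0$; and once we prove below that $\me^{\crit-p_\ve}\to1$, passing to the limit in the rescaled equation yields $\Delta(cV)=|cV|^{\crit-2}(cV)$, which compared with $\Delta V=|V|^{\crit-2}V$ forces $c^{\crit-2}=1$, i.e. $c=1$. Everything thus reduces to the Pohozaev identity and, in particular, to controlling the rate of convergence $\crit-p_\ve\to0$.

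Testing \eqref{Intro:eq1} against $\chi(d_g(\cdot,\xe))\big(x^k\partial_k u_\ve+\pui u_\ve\big)$, read in the chart $\exp_{\xe}$, and integrating by parts over $B_\delta(\xe)$ gives a Pohozaev identity of the schematic form
\[
\Big(\frac{n}{p_\ve}-\pui\Big)\int_{B_\delta(\xe)}|u_\ve|^{p_\ve}\,dv_g=\int_{B_\delta(\xe)}\Big(h_\ve+\tfrac12\langle x,\nabla h_\ve\rangle\Big)u_\ve^2\,dv_g+\mathcal E_\ve(\delta)+\mathcal B_\ve(\delta),
\]
where $\mathcal B_\ve(\delta)$ is a boundary integral over $\partial B_\delta(\xe)$ in $u_\ve$ and $\nabla u_\ve$, and $\mathcal E_\ve(\delta)=\int_{B_\delta(\xe)}\big(x^k\partial_k u_\ve+\pui u_\ve\big)(\Delta_\xi-\Delta_g)u_\ve\,dx$ is the metric-error term; the expansion $dv_g=(1-\tfrac16\hbox{Ric}(x_0)_{kl}x^kx^l+O(|x|^3))\,dx$ and the cutoff $\chi$ generate further controlled errors. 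Using the pointwise bounds and $\tilde u_\ve\to cV$, one expands each term after rescaling $x=\me y$: $\frac{n}{p_\ve}-\pui=\frac{(n-2)^2}{4n}(\crit-p_\ve)(1+o(1))$ and $\int_{B_\delta}|u_\ve|^{p_\ve}\to c^{2}\int_{\rn}|V|^{\crit}$; the $h_\ve u_\ve^2$-term together with the $\hbox{Ric}$- and scalar-curvature pieces coming from $\mathcal E_\ve(\delta)$ and from $dv_g$ combine — after the Taylor expansion $g_{ij}=\delta_{ij}-\tfrac13 R_{ikjl}(x_0)x^kx^l+O(|x|^3)$ and the orthogonal decomposition of $R_{ikjl}$ into its Weyl part and its trace terms — into a multiple of $c^2\me^{2}\big(h(x_0)-\frac{n-2}{4(n-1)}S_g(x_0)\big)\int_{\rn}V^2$ when $n\ge5$ (and of $c^2\me^{2}\ln\frac1\me$ times the analogous constant when $n=4$), while the pure Weyl part of $\mathcal E_\ve(\delta)$ yields precisely a multiple of $c^2\me^{2}\,\hbox{Weyl}_g\otimes B$; finally $\mathcal B_\ve(\delta)=O(c^2\me^{n-2})$.

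For $n\ge5$ all surviving contributions are $O(\me^2)+O(\crit-p_\ve)$ while $\mathcal B_\ve(\delta)=o(\me^2)$; were $\crit-p_\ve\gg\me^2$, dividing the identity by $\crit-p_\ve$ would force $\int_{\rn}|V|^{\crit}=0$, absurd, so $\crit-p_\ve=O(\me^2)$, the limit $\Lambda=\lim\frac{\crit-p_\ve}{\me^2}\ge0$ exists, and $(\crit-p_\ve)\ln\me=\frac{\crit-p_\ve}{\me^2}\,\me^2\ln\me\to0$, whence $\me^{\crit-p_\ve}\to1$ and, by the first paragraph, $c=1$; substituting $c=1$ in the expanded identity gives \eqref{condnecessaire:5}. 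For $n=4$ the dominant scale is $\me^2\ln\frac1\me$ — because of the borderline divergence of $\int_{B_{\delta/\me}}V^2$, the trace-freeness and first Bianchi identity of the Weyl tensor killing the would-be $\ln$-enhanced part of its contribution — and the same scheme produces \eqref{condnecessaire:4} with $\Lambda=\lim\frac{4-p_\ve}{\me^2\ln\frac1\me}$. For $n=3$ the dominant scale is $\me$: here one first upgrades the estimates, away from $\xe$, to $u_\ve=c\,\me^{1/2}\big(\int_{\rn}|V|^{\crit-2}V\big)G_{x_0}+o(\me^{1/2})$, where $G_{x_0}$ is the Green's function of $\Delta_g+h$ at $x_0$ and its expansion $G_{x_0}(x)=\frac{1}{4\pi}d_g(x,x_0)^{-1}+m_h(x_0)+o(1)$ defines $m_h(x_0)$; inserting this into $\mathcal B_\ve(\delta)$, checking that the $\delta$-dependent parts of $\mathcal B_\ve(\delta)$ and of $\int_{B_\delta}h_\ve u_\ve^2$ cancel (as they do in the Pohozaev identity for $G_{x_0}$ itself), and reading off the residual mass term yields \eqref{condnecessaire:3}, the rate bound $6-p_\ve=O(\me)$ following as before.

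The main obstacle is twofold. First, and most importantly, one needs the sharp pointwise control of sign-changing $u_\ve$ by the bubble profile — and, in dimension $3$, by $\me^{1/2}G_{x_0}$ on the complement of a shrinking ball around $\xe$ — with an error small enough to justify each term-by-term expansion above; this is the genuinely new analytical input in the sign-changing setting, precisely where the failure of a maximum principle for $|u_\ve|$ must be circumvented, and I would invoke the estimates of the earlier sections rather than redo them. Second, the bookkeeping of $\mathcal E_\ve(\delta)$: isolating, among the many $R_{ikjl}(x_0)$-terms produced by $(\Delta_\xi-\Delta_g)u_\ve$, exactly the combination that assembles into $\hbox{Weyl}_g\otimes B$ with the normalisation of the statement, and verifying — through trace-freeness and the first Bianchi identity — that the leading $|y|^{4-2n}$ asymptotic part of the relevant integrand cancels, so that this term is genuinely of order $\me^2$ and affects the condition only when $n\ge5$, is the delicate computational core. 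The existence of the full (not merely subsequential) limit $\Lambda$, and the fact that $\xe$ itself converges, then follow once the identity is in place, its right-hand side being forced to a fixed value.
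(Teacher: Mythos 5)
Your sketch is sound at the level of strategy and would, with the advertised bookkeeping, reproduce the theorem; but it takes a genuinely different route from the paper at the central computational juncture. The paper's sharp Pohozaev identity (Section~\ref{Sec5}) is written not in raw Riemannian normal coordinates for $g$ but in Lee--Parker \emph{conformal} normal coordinates: it conformally rescales to $g_{x_\ve}=\Lambda_{x_\ve}^{4/(n-2)}g$ and works with $v_\ve=(u_\ve/\Lambda_{x_\ve})\circ\exp^{g_{x_\ve}}_{x_\ve}$. In these coordinates $\mathrm{Ric}_{g_\ve}(x_\ve)=0$, $S_{g_\ve}(x_\ve)=0$ and $\sqrt{|g_\ve|}=1+O(|x|^N)$, so the curvature error $A_\ve(v_\ve)$ is \emph{purely Weyl} at leading order, and the zeroth-order coefficient of the transformed equation is directly $\hat h_\ve\propto h_\ve-\frac{n-2}{4(n-1)}S_g$, which is exactly the combination that appears in \eqref{condnecessaire:4}. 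You propose instead to work with $\exp_{x_\ve}$ for $g$, decompose $R_{ikjl}$ into Weyl + Ricci + scalar pieces, and reassemble the trace pieces (together with the contributions from $\sqrt{|g|}$) into $h-\frac{n-2}{4(n-1)}S_g$. That reassembly is correct in principle---it is the pre-Lee--Parker way the computation was originally organized---but verifying that the Ricci and scalar contributions conspire to produce precisely the conformal-Laplacian coefficient $\frac{n-2}{4(n-1)}$ is a nontrivial computation that the paper simply bypasses, and it has to be done at the delicate logarithmic order in $n=4$. The conformal-normal route buys a dramatically shorter bookkeeping at the cost of introducing the Lee--Parker machinery. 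The rest of your outline (a rough Pohozaev identity to pin down the rate $\crit-p_\ve$ and deduce $c=1$; the two-scale analysis isolating the $\mu_\ve^2\ln\frac1{\mu_\ve}$ order in $n=4$; passing to $v_\ve/\mu_\ve^{1/2}\to\lambda\hat G$ in $n=3$ and sending $\delta\to 0$ to read off the mass from the boundary integral) matches the paper closely, though in $n=3$ the paper does not exhibit a cancellation between $\mathcal B_\ve(\delta)$ and $\int_{B_\delta}h_\ve u_\ve^2$: it shows the whole $\tilde A_\ve$-integral is $O(\delta\mu_\ve)$ and the boundary term converges to $-\frac12 m_h(x_0)+\ve_\delta$, then lets $\delta\to0$.

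One small but worth-flagging correction: the cancellation that kills the would-be logarithmically enhanced Weyl contribution in $n=4$ is not a trace-freeness or first-Bianchi phenomenon. After the integration by parts in \eqref{weyl:dim4:1} the dominant integrand is proportional to $\hbox{Weyl}_g(x_0)_{ipqj}\,x^p x^q x^i x^j/|x|^8$, and this vanishes simply because $\hbox{Weyl}_g(x_0)_{ipqj}$ is antisymmetric in $(i,p)$ while $x^i x^p$ is symmetric. Citing the right symmetry matters if a reader tries to verify the estimate.
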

A few comments on Theorems \ref{maintheo} and \ref{maintheo2} are in order. Assumption \eqref{one:bubble} implies that $(u_\ve)_{\ve >0}$ blows-up as a single, possibly sign-changing bubble. In other words, \eqref{one:bubble} is a Struwe decomposition with a single bubbling profile. No assumption is made on the energy of $V$ (except that it is finite) and $V$ is not assumed to be non-degenerate in the sense of Duyckaerts-Kenig-Merle \cite{DuyckaertsKenigMerle}. Assumption \eqref{one:bubble} should be regarded, insofar as sign-changing solutions are considered, as the analogue of the classical notion of isolated and simple blow-up point for positive solutions introduced in Schoen \cite{SchoenPreprint}. 

\medskip\noindent Theorems \ref{maintheo} and \ref{maintheo2} are not new for \emph{positive} solutions. It has been known since Obata \cite{Obata} and Struwe \cite{Struwe} that when $(u_\ve)_{\ve>0}$ is a family of  \emph{positive} solutions of \eqref{Intro:eq1} the bubble $V$ is positive and radial and given by \eqref{B0} below. In this case, and when $n \ge 5$, the additional term $\hbox{Weyl}_g \otimes B$ in \eqref{condnecessaire:5} vanishes by the symmetries of $\hbox{Weyl}_g$ (see Proposition \ref{prop:WB} below), and condition \eqref{condnecessaire:5} becomes 
 $$ h(x_0) - \frac{n-2}{4(n-1)} S_g(x_0)= C(n)\Lambda \ge 0, \quad \hbox{ with }\Lambda:=\lim_{\ve\to 0}\frac{\crit -p_\ve}{\mu_\ve^2}$$
for some dimensional constant $C(n)>0$. When $n=3,4$, \eqref{condnecessaire:4} and \eqref{condnecessaire:3} show that 
\begin{equation*}
\begin{aligned}
 h(x_0) - \frac{1}{6} S_g(x_0)  = C(4) \Lambda \hbox{ when }n=4 \quad \text{ and } \quad 
 m_{h}(x_0)  = - C(3)\Lambda \hbox{ when }n=3,
\end{aligned} 
\end{equation*}
where $\Lambda$ is as in the statement of the Theorem. When $p_\ve = \crit $ for all $\ve$, and for positive finite-energy solutions, these conditions have been known since the work of Li-Zhu \cite{LiZhu} and Druet \cites{DruetJDG, DruetYlowdim}. 

\smallskip\noindent If $(u_\ve)_{\ve>0}$ is a blowing-up family of \emph{sign-changing} solutions of \eqref{Intro:eq1} with a single bubbling profile as in \eqref{one:bubble}, however, the bubble $V$ is sign-changing in general and  \eqref{condnecessaire:5}, \eqref{condnecessaire:4} and \eqref{condnecessaire:3} display new phenomena. When $n \ge 5$ the additional term $\hbox{Weyl}_g \otimes B$ cannot be expected to vanish \emph{a priori}. We construct indeed in Section \ref{sec:weyl} examples of manifolds $(M,g)$ and of solutions $V$ of \eqref{yamabe:intro} such that $\hbox{Weyl}_g \otimes B \neq 0$ (see Proposition \ref{prop:weyl:negative} below). The term $\hbox{Weyl}_g \otimes B$ is thus a striking new feature of sign-changing blow-up that does not appear for positive solutions. When $n \ge 5$, condition \eqref{condnecessaire:5} is entirely new and highlights a strong interaction between $h$, the geometry of $(M,g)$ and the limiting bubble $V$ itself. When $n=3,4$, \eqref{condnecessaire:4} and  \eqref{condnecessaire:3} involve $\int_{\R^n} |V|^{2^*-2}V dx$. But if $V$ is a sign-changing solution of \eqref{yamabe:intro} for $n \ge 3$, the latter integral may vanish if $V$ decays strongly at infinity (see \eqref{deflambda} below). When this is the case conditions \eqref{condnecessaire:4} and \eqref{condnecessaire:3} indicate that, in dimensions $3$ and $4$, the interaction between $B_\ve$ and the geometry of $(M,g)$ takes place at a higher-order than it did for positive solutions.

\subsection{Consequences of Theorem \ref{maintheo} on the blow-up picture of \eqref{Intro:eq1}}

\noindent Theorems \ref{maintheo} and \ref{maintheo2} shed new light on the \emph{stability} of the set of sign-changing solutions of \eqref{Intro:eq1}. We say that equation \eqref{Intro:eq0}, with $p = 2^*$, is stable if for every family $(h_{\ve})_{\ve >0}$ converging to $h$, every energy-bounded family of solutions $(u_\ve)_{\ve >0}$ of \eqref{Intro:eq1} converges in $C^2(M)$, up to a subsequence. Druet \cite{DruetJDG} proved that, for \emph{positive} solutions, stability holds provided $h \neq \frac{n-2}{4(n-1)} S_g$ everywhere in $M$ (with a caveat in dimensions $3$ and  $6$, see \cite{DruetJDG}). This result was generalised to the case of sign-changing solutions, when $n \ge 7$ and when $(M,g)$ is locally conformally flat, in Premoselli-V\'etois \cite{PremoselliVetois2}. When $(M,g)$ is not locally conformally flat, and at least when $n \ge 5$, however, \eqref{condnecessaire:5} shows that necessary conditions for a one bubble blow-up involve $h$, the geometry of $(M,g)$ and the possible limiting bubble $V$ itself. The set of sign-changing solutions of \eqref{yamabe:intro} is still poorly understood and no classification result is known, but examples of large-energy sequences of solutions have been constructed by Del Pino-Musso-Pacard-Pistoia \cite{DelPinoMussoPacardPistoia1}, Ding \cite{ding},  Medina-Musso \cite{MedinaMusso}, Medina-Musso-Wei \cite{MedinaMussoWei}. The wealth of solutions of \eqref{yamabe:intro} is the main obstacle to understanding the term $\hbox{Weyl}_g \otimes B$ in \eqref{condnecessaire:5}. It seems therefore highly unlikely, even in the single-bubble case, that stability for sign-changing solutions of \eqref{Intro:eq1} can be enforced solely by a global assumption on $h$, as was the case for positive solutions. 

\medskip\noindent Theorems \ref{maintheo} and \ref{maintheo2}, however, allow us to rule out limiting bubbles $V$ that may appear in the blow-up in some cases. We start with an example in dimensions $3$ and $4$. If $V$ is a solution of \eqref{yamabe:intro} it satisfies 
$$ V(x) = \frac{\lambda(V)}{|x|^{n-2}} +O \left(  |x|^{1-n} \right)   \quad \text{ as } |x| \to + \infty, $$
where $(n-2) \omega_{n-1} \lambda(V) = \int_{\R^n} |V|^{\crit -2}V dx$ (see \eqref{eq:expansion} and \eqref{deflambda} below). Bubbles decaying at infinity as $|x|^{2-n}$ satisfy $\lambda(V)>0$: this is the case for the positive bubbles given by \eqref{B0} and for the bubbles of \cite{DelPinoMussoPacardPistoia1}. An immediate corollary of Theorem \ref{maintheo2} is as follows:

\begin{corol} \label{corol:mass:3}
Let $(M^n,g)$, $n=3,4$, be a smooth, closed, connected Riemannian manifold and $h, (h_\ve)_{0 < \ve \le 1}\in C^1(M)$ be such that $\lim_{\ve\to 0}h_\ve=h$ in $C^1(M)$ and $\Delta_g + h$ is coercive. Let $(p_\ve)_{0< \ve \le 1}\in (2, \crit]$ be such that $\lim_{\ve \to 0} p_\ve = \crit $. Let  $(u_\ve)_{0< \ve \le 1}$ be a family of solutions of \eqref{Intro:eq1} satisfying \eqref{one:bubble}, where $B_\ve$ is a family of bubbles centered at $x_\ve$, of radius $\me$ and modeled on $V$. Assume that
\begin{itemize}
\item the mass of $\Delta_g +h$ is positive at every point of $M$ if $n=3$
\item $h < \frac{1}{6} S_g$ everywhere in $M$ if $n=4$.
\end{itemize} 
Then $\int_{\R^n} |V|^{2^*-2}V dx = 0$. In particular, $V(x) = O(|x|^{1-n})$ as $|x| \to + \infty$ and $V$ is neither positive nor one of the bubbles of \cite{DelPinoMussoPacardPistoia1}.
\end{corol}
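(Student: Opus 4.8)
The plan is simply to read the corollary off Theorem~\ref{maintheo2} by playing the sign of the non-negative constant $\Lambda$ against the assumed sign of the geometric quantity appearing in \eqref{condnecessaire:4}, resp. \eqref{condnecessaire:3}. Consider first $n=4$: since $(u_\ve)_\ve$ satisfies \eqref{one:bubble}, Theorem~\ref{maintheo2} applies, so \eqref{condnecessaire:4} holds for some $\Lambda\geq0$. The coefficient $\bigl(\int_{\R^4}|V|^{2}V\,dx\bigr)^2\bigl(\omega_3\int_{\R^4}|V|^{4}\,dx\bigr)^{-1}$ multiplying $h(x_0)-\frac16 S_g(x_0)$ is non-negative, being a square divided by $\omega_3\int_{\R^4}|V|^4\,dx>0$ (recall $V\not\equiv0$). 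Under the hypothesis $h<\frac16 S_g$ on $M$ one has $h(x_0)-\frac16 S_g(x_0)<0$, so the left-hand side of \eqref{condnecessaire:4} is $\leq0$ while the right-hand side $\Lambda$ is $\geq0$; hence both sides vanish, which forces $\int_{\R^4}|V|^{2}V\,dx=0$. Since $\crit=4$ when $n=4$, this is exactly $\int_{\R^4}|V|^{\crit-2}V\,dx=0$. The case $n=3$ is handled identically with \eqref{condnecessaire:3}: its left-hand side $-6\bigl(\int_{\R^3}|V|^{4}V\,dx\bigr)^2\bigl(\int_{\R^3}|V|^{6}\,dx\bigr)^{-1}m_h(x_0)$ is $\leq0$ whenever $m_h(x_0)>0$, so comparing with $\Lambda\geq0$ yields $\int_{\R^3}|V|^{4}V\,dx=0$, i.e. $\int_{\R^3}|V|^{\crit-2}V\,dx=0$ since $\crit=6$ when $n=3$.

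To conclude, I would invoke the asymptotics of solutions of \eqref{yamabe:intro} recalled before the corollary: $V(x)=\lambda(V)|x|^{2-n}+O(|x|^{1-n})$ as $|x|\to+\infty$, with $(n-2)\omega_{n-1}\lambda(V)=\int_{\R^n}|V|^{\crit-2}V\,dx$. Having just shown $\int_{\R^n}|V|^{\crit-2}V\,dx=0$, we get $\lambda(V)=0$ and therefore $V(x)=O(|x|^{1-n})$ as $|x|\to+\infty$. Finally, the standard positive bubbles \eqref{B0} decay exactly like $|x|^{2-n}$ and hence satisfy $\lambda(V)>0$, and, as recalled above, so do the sign-changing bubbles constructed in \cite{DelPinoMussoPacardPistoia1}; since $\lambda(V)=0$, the limiting bubble $V$ can be none of these, which gives the last assertion.

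There is no real obstacle here, the entire content being carried by Theorem~\ref{maintheo2}. The only points that deserve a line of justification are the non-negativity of the prefactors in \eqref{condnecessaire:4} and \eqref{condnecessaire:3} (so that the \emph{strictness} of the hypotheses $h<\frac16 S_g$ and $m_h>0$ is genuinely used), the elementary identifications $|V|^{\crit-2}V=|V|^{2}V$ in dimension $4$ and $|V|^{\crit-2}V=|V|^{4}V$ in dimension $3$, and the fact — already noted in the introduction — that the bubbles of \cite{DelPinoMussoPacardPistoia1} have $\lambda(V)>0$.
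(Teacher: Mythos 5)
Your proposal is correct and is exactly the argument the authors have in mind: the paper presents this corollary as an ``immediate corollary'' of Theorem~\ref{maintheo2} with no written proof, and the sign-comparison between $\Lambda\ge0$ and the non-positive left-hand sides of \eqref{condnecessaire:4} and \eqref{condnecessaire:3}, followed by the identification $\lambda(V)=0$ via \eqref{deflambda} and \eqref{eq:expansion}, is precisely what the corollary's phrasing invites. One small point you might make more explicit is that concluding $\int_{\R^n}|V|^{\crit-2}V\,dx=0$ from the vanishing of the left-hand side requires the \emph{strict} inequalities $h(x_0)<\tfrac16 S_g(x_0)$ (resp.\ $m_h(x_0)>0$) so that the geometric factor cannot vanish — you do note this, so the proof stands as written.
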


In dimensions $n \ge 5$ \eqref{condnecessaire:5} does not provide any information on the decay of $V$ at infinity, but we can still rule out the existence of symmetric bubbles in some cases. We recall that Ding \cite{ding} constructed a family of solutions of \eqref{yamabe:intro}, possessing $O(p) \times O(n-p)$ symmetry for $2 \le p \le n-1$. We show that Ding solutions cannot appear as bubbles in a single-bubble blow-up on symmetric manifolds:

\begin{corol} \label{corol:ding:5}
Let $(M^n,g) = (\mathbb{S}^p \times \mathbb{S}^q, g_p \times g_q)$, where $g_p$ and $g_q$ are the round metrics on $\mathbb{S}^p$ and $\mathbb{S}^q$, $p,q \ge2$ and $n=p+q \ge 5$. Let $h, (h_\ve)_{0 < \ve \le 1}\in C^1(M)$ be such that $\lim_{\ve\to 0}h_\ve=h$ in $C^1(M)$ and $\Delta_g + h$ is coercive. Let $(p_\ve)_{0< \ve \le 1}\in (2, \crit]$ be such that $\lim_{\ve \to 0} p_\ve = \crit $. Let  $(u_\ve)_{0< \ve \le 1}$ be a family of solutions of \eqref{Intro:eq1} satisfying \eqref{one:bubble}, where $B_\ve$ is a family of bubbles centered at $x_\ve$, of radius $\me$, modeled on $V$. Assume that $h \le \frac{n-2}{4(n-1)}S_g$ everywhere. Then
$$ \hbox{Weyl}_g \otimes B \ge 0. $$
As a consequence, $V$ cannot be a Ding solution. 
\end{corol}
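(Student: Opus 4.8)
The plan is to combine the sign information coming from Theorem \ref{maintheo} with the explicit symmetric structure of the manifold $\mathbb{S}^p\times\mathbb{S}^q$ to pin down the sign of $\hbox{Weyl}_g\otimes B$, and then to contradict it for Ding solutions. First I would apply Theorem \ref{maintheo}: since $h\le \frac{n-2}{4(n-1)}S_g$ everywhere, the left-hand side of \eqref{condnecessaire:5} is nonpositive at $x_0$, while the prefactor $\frac{(n-2)^2}{4n}\frac{\int_{\R^n}|V|^{\crit}dx}{\int_{\R^n}V^2dx}$ is strictly positive; hence $\Lambda - \hbox{Weyl}_g\otimes B\le 0$, i.e. $\hbox{Weyl}_g\otimes B\ge \Lambda\ge 0$. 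This already gives the first assertion, independently of which bubble $V$ is. The point is that the positivity of $\hbox{Weyl}_g\otimes B$ is forced, with no room left for a negative contribution.

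Next I would compute $\hbox{Weyl}_g(x_0)$ explicitly on $\mathbb{S}^p\times\mathbb{S}^q$. The Weyl tensor of a product of Einstein manifolds is constant (the manifold is homogeneous) and can be written down in block form relative to the splitting $T_{x_0}M = T\mathbb{S}^p\oplus T\mathbb{S}^q$: schematically it is a fixed linear combination of the curvature tensors of the two round factors and of the ``cross'' terms, with coefficients depending only on $p,q$. The key qualitative feature I would extract is the sign pattern of $\hbox{Weyl}_g(x_0)_{i\alpha j\beta}$ when contracted against tensors built from the $O(p)\times O(q)$-symmetric (or, more generally, $O(p)\times O(n-p)$-symmetric) structure, because this is exactly the symmetry type of a Ding solution. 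The definition of $\hbox{Weyl}_g\otimes B$ contracts $\hbox{Weyl}_g(x_0)_{i\alpha j\beta}$ with $\int_{\rn}x^\alpha x^\beta\partial_{ij}^2 V\left(\pui V + x^l\partial_l V\right)dx$; for a $V$ invariant under $O(p)\times O(n-p)$, this integral tensor is itself built only out of the two projections onto the invariant subspaces $\R^p$ and $\R^{n-p}$, so the whole contraction reduces to a single number that is a product of (i) a dimensional/curvature factor from the Weyl tensor and (ii) a purely Euclidean integral depending on $V$. I would then argue that, because the Weyl tensor of the round-sphere product is traceless, the two contributions to $\hbox{Weyl}_g\otimes B$ coming from the $\R^p$-block and the $\R^{n-p}$-block have opposite signs, and a direct (but elementary) computation of the scalar Euclidean integral for a Ding-type profile shows the resulting number is \emph{strictly negative} — contradicting $\hbox{Weyl}_g\otimes B\ge 0$.

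In more detail, the steps in order are: (1) invoke Theorem \ref{maintheo} to get $\hbox{Weyl}_g\otimes B\ge\Lambda\ge0$ under the hypothesis on $h$; (2) write $\hbox{Weyl}_g(x_0)$ for $\mathbb{S}^p\times\mathbb{S}^q$ in the block form adapted to the product splitting and record the coefficients as functions of $p,q$; (3) use the $O(p)\times O(n-p)$ invariance of a Ding solution $V$ to reduce the tensor $\int x^\alpha x^\beta \partial^2_{ij}V(\pui V + x^l\partial_l V)\,dx$ to its block-diagonal form with only two distinct components, and likewise observe that $\int_{\rn}|V|^{\crit}dx$ is just a normalizing constant; (4) perform the contraction and, using tracelessness of $\hbox{Weyl}_g$ to eliminate the ambiguous additive constants, express $\hbox{Weyl}_g\otimes B$ as (dimensional constant)$\times$(difference of two explicit Euclidean integrals of $V$); (5) evaluate the sign of this difference for a Ding profile and check it is negative, contradicting step (1). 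I expect step (3)–(4) to be the main obstacle: one must be careful that the contraction of a traceless tensor with $\int x^\alpha x^\beta\partial^2_{ij}V(\cdots)dx$ is genuinely insensitive to the spherical symmetrization ambiguities, i.e. that the parts of the integral tensor proportional to $\delta^{\alpha\beta}$, $\delta^{ij}$, or $\delta^{i\alpha}\delta^{j\beta}$ drop out against $\hbox{Weyl}_g$, leaving a well-defined quantity whose sign can be read off from the Ding symmetry alone. Integration by parts in the Euclidean integral (moving derivatives off $V$ using the conformal field $\pui V + x^l\partial_l V$, which is the infinitesimal generator of the scaling symmetry of \eqref{yamabe:intro}) should turn the relevant component into something manifestly signed for a Ding solution, closing the argument.
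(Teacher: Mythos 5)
Your overall approach is correct and follows the same route as the paper: step~(1) (deduce $\hbox{Weyl}_g\otimes B\ge 0$ from \eqref{condnecessaire:5}) is exactly the paper's first sentence, and steps~(2)--(5) re-derive the content of Proposition~\ref{prop:weyl:negative} and Corollary~\ref{corol:weyl:negatif}, which the paper simply cites. However, there are two genuine imprecisions in your description of the sign mechanism that would cause trouble if you tried to carry the computation out as stated.

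First, the claim that ``the two contributions coming from the $\R^p$-block and the $\R^{n-p}$-block have opposite signs'' is not what happens. Writing the Weyl tensor of $\sph^p\times\sph^q$ in block form, the contraction of the $(ijkl)$-block with $\int x^p x^q \partial_i V\partial_j V\,dx$ gives $C_1\int\bigl(|X|^2|\nabla_1 V|^2-(X,\nabla_1 V)^2\bigr)$, which \emph{vanishes identically} when $V$ is $O(p)\times O(q)$-invariant (since then $\nabla_1 V$ is parallel to $X$, Cauchy--Schwarz is an equality). The $(\alpha\beta\gamma\delta)$-block vanishes for the analogous reason. There is no cancellation of opposite signs between them; both terms are zero. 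Attributing this to tracelessness alone is also not quite right: it is the Kulkarni--Nomizu/antisymmetrization structure of each block, combined with the $O(p)\times O(q)$ invariance of $V$, that makes them vanish.

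Second, your step~(5) proposes to ``evaluate the sign of this difference for a Ding profile'' by a direct computation. This step would be very hard as stated, because Ding solutions are not explicit. The point you are missing is that after the diagonal blocks drop out, the only surviving term is the $(i\alpha j\beta)$ cross-block, and it contracts to
\[
-C_3\int_{\rn}\bigl(|X|\,\partial_2 V - |Y|\,\partial_1 V\bigr)^2\,dX\,dY,
\]
with $C_3>0$, which is $\le 0$ \emph{automatically} because it is (minus) a perfect square. It vanishes iff $V$ is radial. Thus no evaluation specific to Ding profiles is needed; the strict negativity holds for \emph{any} $O(p)\times O(q)$-invariant solution of \eqref{yamabe:intro} that is not $O(n)$-invariant, and a Ding solution of large enough energy satisfies the latter since radial solutions are unique up to scaling by Obata's theorem. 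This is exactly what Proposition~\ref{prop:weyl:negative} and Corollary~\ref{corol:weyl:negatif} establish, and the final conclusion then follows as in your step~(1).
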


\begin{proof}
Since $h \le \frac{n-2}{4(n-1)}S_g$ everywhere, \eqref{condnecessaire:5} shows that $ \hbox{Weyl}_g \otimes B \ge 0$. But as shown in Corollary \ref{corol:weyl:negatif} below, $\hbox{Weyl}_g \otimes B <0$ if $V$ is a Ding solution. 
\end{proof}
Corollaries \ref{corol:mass:3} (when $n=3$) and \ref{corol:ding:5} both apply to one-bubble sign-changing blow-up for the sign-changing Yamabe equation:
$$ \Delta_g u + \frac{n-2}{4(n-1)}S_g u = |u|^{2^*-2}u. $$
Solutions of this equation naturally appear as minimisers for the second conformal eigenvalue (see Ammann-Humbert \cite{AmmannHumbert}), and their stability properties shed new light on the set of possible minimisers (see Premoselli-V\'etois \cite{PremoselliVetois3}). Other stability results for sign-changing solutions are in Premoselli-V\'etois \cites{PremoselliVetois, PremoselliVetois2}. Examples of sign-changing families of solutions of \eqref{Intro:eq1} have been constructed in Bonheure-Casteras-Premoselli \cite{BonheureCasterasPremoselli}, Micheletti-Pistoia-V\'etois \cite{MichelettiPistoiaVetois}, Pistoia-V\'etois \cite{PistoiaVetois} and Robert-V\'etois \cites{RobertVetois3, RobertVetois4}. Deng-Musso-Wei \cite{DengMussoWei} have considered an innovative  construction of a family of sign-changing solutions $(u_\ve)_{\ve >0}$ to the \eqref{Intro:eq1} with $h_\ve\equiv h$ and $p_\ve:=\crit-\ve$ that blows-up like a single bubble as in \eqref{one:bubble}. We discuss similar constructions in Section \ref{sec:final}.

\medskip\noindent The structure of the paper is as follows. In Section \ref{Sec2} we introduce a few properties of sign-changing solutions of \eqref{yamabe:intro} and highlight the main differences with respect to the positive case. Section \ref{sec:weyl} contains the formal definition and properties of the families of bubbles that we investigate here. We also investigate the term $\hbox{Weyl}_g \otimes B$ there and construct examples of manifolds and bubbles where $\hbox{Weyl}_g \otimes B \neq 0$. Section \ref{Sec3} is the core of the analysis of this paper and we prove there that relation \eqref{one:bubble} can be improved into optimal pointwise bounds on $u_\ve$. This is the content of Proposition \ref{prop:C0} and is the main ingredient in the proof of Theorems \ref{maintheo} and \ref{maintheo2}. Finally, Theorems \ref{maintheo} and \ref{maintheo2} are proven in Section \ref{Sec5}. 

\medskip

\textbf{Acknowledgements:} the authors would like to thank P.-D. Thizy and J. V\'etois for many fruitful discussions in the early version of this work.

 \section{Finite-energy nodal solutions of the Yamabe equation in $\R^n$} \label{Sec2}
We let $\Sigma$ be the set of non-zero finite-energy solutions of \eqref{yamabe:intro}:
\ben \label{defSigma}
\Sigma = \big \{ V \in D^{1,2}(\R^n) \backslash \{ 0 \}, \quad \Delta_{\xi} V =  |V|^{\crit -2} V \big \}. 
\een
Regularity theory for critical equations (see eg Trudinger \cite{Trudinger}) together with standard elliptic theory, shows that there exists $\alpha_n>0$ such that $V\in C^{3,\alpha_n}_{loc}(\R^n)$ for all $V\in \Sigma$.  If $V \in \Sigma$ is positive it is equal by Obata \cite{Obata}, up to translations and rescalings, to
\begin{equation}\label{B0}
B_0^+(x)=\left(1+ \frac{|x|^2}{n(n-2)}\right)^{-\frac{n-2}{2}}, \quad x \in \R^n.
\end{equation}
By Caffarelli-Gidas-Spruck \cite{CaGiSp} this result remains true for all positive solutions of \eqref{yamabe:intro}, without the assumption that they belong to $D^{1,2}(\R^n)$. Note also that, up to dilation, $B_0^+$ is the only non-zero \emph{radial} solution of \eqref{yamabe:intro} which belongs to $D^{1,2}(\R^n)$ by a simple application of Pohozaev's identity. 

\medskip\noindent 
If $V \in D^{1,2}(\R^n)$ we define its Kelvin transform as
\ben \label{eq:Kelvin}
V^*(x)= \frac{1}{|x|^{n-2}} V \left(  \frac{x}{|x|^2}\right)   \quad \text{ for a.e. } x \in \R^n\backslash \{0\}.
\een
It is well-known (see for instance Duyckaerts-Kenig-Merle \cite[Proposition 3.1]{DuyckaertsKenigMerle}) that $V \mapsto V^*$ defines an isometry of $D^{1,2}(\R^n)$ and of $L^{\crit }(\R^n)$ and that 
$$\Delta_{\xi} V^*(x) = \frac{1}{|x|^{n+2}} \Delta_{\xi} V\left( \frac{x}{|x|^2} \right)$$
in $\R^n \backslash \{0\}$, provided $V$ is of class $C^2$. As a consequence we have $V^* \in \Sigma$ whenever $V \in \Sigma$ (see again \cite{DuyckaertsKenigMerle}) and the regularity theory for \eqref{yamabe:intro} shows that $V^* \in C^{3}(\R^n)$ when $V \in \Sigma$. The following result provides a precise description of the behavior at infinity of the elements in $\Sigma$:

\begin{lemme} \label{lemme:expansion}
Let $V \in \Sigma$. There exist $\lambda(V)\in \R$ and $\alpha(V) \in \R^n$ such that the following asymptotic expansion holds as $|x|\to + \infty$ and can be differentiated:
\ben \label{eq:expansion}
 \begin{aligned}
 V(x) & = \frac{\lambda(V)}{|x|^{n-2}} + \frac{\langle \alpha(V), x\rangle}{|x|^n} + O \left(  \frac{1}{|x|^n}\right)   .
  \end{aligned} 
 \een
 \end{lemme}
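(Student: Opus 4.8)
The plan is to transfer the problem to a neighbourhood of the origin by means of the Kelvin transform and then Taylor-expand. First I would record what the excerpt already provides: since $V\in\Sigma$, we have $V^{*}\in\Sigma$ and, crucially, $V^{*}\in C^{3}(\R^{n})$ — in particular $V^{*}$, $\nabla V^{*}$ and $D^{2}V^{*}$ are bounded on $\overline{B_{1/2}(0)}$. Because $x\mapsto x/|x|^{2}$ is an involution and the Kelvin transform squares to the identity, one has the exact pointwise identity $V(x)=(V^{*})^{*}(x)=|x|^{2-n}V^{*}\!\left(x/|x|^{2}\right)$ for $x\neq 0$. This reduces everything to the behaviour of $V^{*}$ near $0$.

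Next I would Taylor-expand $V^{*}$ at the origin: writing $V^{*}(y)=V^{*}(0)+\langle\nabla V^{*}(0),y\rangle+R(y)$, the $C^{2}$ regularity (a fortiori $C^{3}$) gives $R(y)=O(|y|^{2})$ and $\nabla R(y)=O(|y|)$ for $|y|\le 1/2$. Substituting $y=x/|x|^{2}$ for $|x|\ge 2$ and multiplying by $|x|^{2-n}$ yields
\[ V(x)=\frac{V^{*}(0)}{|x|^{n-2}}+\frac{\langle\nabla V^{*}(0),x\rangle}{|x|^{n}}+|x|^{2-n}R\!\left(x/|x|^{2}\right), \]
and $|x|^{2-n}R(x/|x|^{2})=|x|^{2-n}O(|x|^{-2})=O(|x|^{-n})$. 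Setting $\lambda(V):=V^{*}(0)\in\R$ and $\alpha(V):=\nabla V^{*}(0)\in\R^{n}$ gives exactly \eqref{eq:expansion}.

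For the differentiated statement I would apply the chain rule directly to $V(x)=|x|^{2-n}V^{*}(\Phi(x))$ with $\Phi(x):=x/|x|^{2}$. Since $\partial_{k}\Phi_{j}(x)=\delta_{jk}|x|^{-2}-2x_{j}x_{k}|x|^{-4}=O(|x|^{-2})$, we get
\[ \partial_{k}V(x)=(2-n)\frac{x_{k}}{|x|^{n}}V^{*}(\Phi(x))+|x|^{2-n}\sum_{j}(\partial_{j}V^{*})(\Phi(x))\,\partial_{k}\Phi_{j}(x). \]
Using $V^{*}(\Phi(x))=\lambda(V)+O(|x|^{-1})$ and the boundedness of $\nabla V^{*}$ near $0$, the first term equals $(2-n)\lambda(V)x_{k}/|x|^{n}+O(|x|^{-n})$ and the second is $O(|x|^{-n})$; as $\partial_{k}\!\left(\langle\alpha(V),x\rangle/|x|^{n}\right)=O(|x|^{-n})$ too, this is precisely the derivative of \eqref{eq:expansion}. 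If higher derivatives are needed they follow the same way from the $C^{3}$ bound on $V^{*}$.

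The point that really carries the argument is the regularity $V^{*}\in C^{3}(\R^{n})$ \emph{up to the origin}: a priori $V^{*}$ only solves \eqref{yamabe:intro} on $\R^{n}\setminus\{0\}$, and it is the removability of the point singularity — which holds because $V^{*}\in D^{1,2}(\R^{n})$, so the singular set has zero capacity, combined with the elliptic regularity theory for \eqref{yamabe:intro} — that makes the Taylor expansion at $0$ legitimate. Since this fact is already established in the excerpt, in the write-up it would be invoked rather than reproved, and everything else is the elementary bookkeeping through the inversion described above.
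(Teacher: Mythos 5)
Your argument is the paper's argument: pass to the Kelvin transform $V^{*}\in C^{3}(\R^{n})$, Taylor-expand at the origin, set $\lambda(V)=V^{*}(0)$ and $\alpha(V)=\nabla V^{*}(0)$, and transfer back via $V(x)=|x|^{2-n}V^{*}(x/|x|^{2})$ and the chain rule. The chain rule identity you write is exactly \eqref{chainrulekelvin} in the paper, so the two proofs coincide.

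One small bookkeeping point on the differentiated statement: as explained in the paper immediately after \eqref{eq:expansion}, ``can be differentiated'' means precisely \eqref{eq:expansionder}, whose remainder is $O\bigl(|x|^{-(n+1)}\bigr)$ and which retains the $\alpha(V)$ contribution explicitly; your version only uses $V^{*}(y)=\lambda(V)+O(|y|)$ and the boundedness of $\nabla V^{*}$, hence only yields $\partial_{k}V(x)=-(n-2)\lambda(V)x_{k}/|x|^{n}+O(|x|^{-n})$, which is strictly weaker. To recover \eqref{eq:expansionder} you should substitute the full two-term expansions $V^{*}(y)=\lambda(V)+\langle\alpha(V),y\rangle+O(|y|^{2})$ and $\partial_{j}V^{*}(y)=\alpha(V)_{j}+O(|y|)$ into the chain rule formula you already wrote; both are available from the $C^{3}$ regularity you invoked, so this is a one-line fix rather than a new idea.
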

When we say that expansion \eqref{eq:expansion} can be differentiated we mean that the following holds as $|x| \to + \infty$: for any $1 \le i \le n$,
\ben \label{eq:expansionder} \begin{aligned}
\partial_i V(x) & = - (n-2)\lambda(V)\frac{x_i}{|x|^{n}} + \frac{\alpha(V)_i |x|^2 - n \langle \alpha(V), x\rangle x_i}{|x|^{n+2}} \\
 &+ O \left(  \frac{1}{|x|^{n+1}}\right)   .\\
 \end{aligned}
 \een

\begin{proof}
Let $V^*$ be, as before, the Kelvin transform of $V$. It is of class $C^3$ in $\R^n$ and we can consider its Taylor expansion at $0$: there exist $\lambda(V) \in \R$ and $\alpha(V) \in \R^n$ such that, as $y \to 0 $, and for $1 \le i \le n$,
\ben \label{eq:expansion1}
\begin{aligned} 
V^*(y) & = \lambda(V) + \langle \alpha(V), y\rangle + O(|y|^2) \\
\partial_i V^*(y) & = \alpha(V)_i + O(|y|)
\end{aligned} 
\een
hold. Since for any $x \neq 0$ we have $ V(x) = \frac{1}{|x|^{n-2}} V^* \left(  \frac{x}{|x|^2} \right)  $, \eqref{eq:expansion} follows from \eqref{eq:expansion1}. To prove \eqref{eq:expansionder} we write that by the chain rule, for $x \neq 0$,
\begin{equation} \label{chainrulekelvin} \partial_i V(x) = - (n-2) \frac{x_i}{|x|^{n}} V^* \left(  \frac{x}{|x|^2} \right)    + \sum_{j=1}^n \frac{1}{|x|^{n-2}} \left(  \frac{\delta_{ij}}{|x|^2} -2 \frac{x_ix_j}{|x|^4} \right)   \partial_j V^* \left(  \frac{x}{|x|^2} \right) 
\end{equation}
holds, and we conclude using again \eqref{eq:expansion1}. 
\end{proof}
As a consequence of \eqref{eq:expansion} and \eqref{eq:expansionder}, there exists $C = C(n,V)$ such that 
\ben \label{eq:decayV}
|V(x)| + (1+|x|)|\nabla V(x)| + (1+|x|)^2|\nabla^2 V(x)| \le C (1 + |x|)^{2-n}
\een
for all $x \in \R^n$. This can also be shown by the conformal invariance of \eqref{yamabe:intro}, see e.g. Premoselli \cite[Lemma 2.2]{Premoselli13}. As shown in Premoselli \cite[Lemma 2.1]{Premoselli13},  we have
\ben \label{deflambda}
\lambda(V) = \frac{1}{(n-2) \omega_{n-1} }\int_{\R^n} |V|^{\crit -2}V dx \quad \hbox{ for all } V \in \Sigma.
\een
If $B_0^+$ is as in \eqref{B0} we have $\lambda(B_0^+) = (n(n-2))^{\frac{n-2}{2}}$. The solutions constructed in \cite{DelPinoMussoPacardPistoia1} also satisfy $\lambda(V) \neq 0$. As proven in Lemma \ref{lemme:expansion}, $\lambda(V)= V^*(0)$. As a consequence, examples where $\lambda(V) = 0$ are easily obtained by Kelvin-transforming sign-changing solutions of $\Sigma$ at a point where they vanish. 

\begin{remark} \label{rem:decay}
An accurate inspection of the proof of Lemma \ref{lemme:expansion} shows that we can state a slightly more precise result. Let $V \in \Sigma$.
\begin{itemize}
\item Assume that $n=3,4$. Then there exists $k \in \mathbb{N}$ and a non-zero homogeneous polynomial $P$ of degree $k$ such that 
\be 
 V(x) = \frac{P(x)}{|x|^{n-2+2k}} + O \left(  \frac{1}{|x|^{n-1+k}}\right)   \quad \text{ as } |x| \to + \infty .
 \ee

\item Assume that $n =5$. Then there exists $k \in \{0, \dots, 4\}$ and a homogeneous polynomial $P$ of degree $k$ such that 
\be 
 V(x) = \frac{P(x)}{|x|^{3+2k}} + \left \{ \begin{aligned} & O \left( \frac{1}{|x|^{4+k}}\right)  & \text{ if } k \le 3 \\
 & O \left( \frac{1}{|x|^{7 + \frac13}}\right) & \text{ if } k = 4  
 \end{aligned} \right.  \quad \text{ as } |x| \to + \infty .
 \ee

\item Assume finally that $n \ge 6$. Let $0 <\alpha < 1$ if $n = 6$ and $0 < \alpha \le \frac{4}{n-2}$ if $n \ge 7$. Then there exists $k \in \{0, \dots, 3\}$ and a homogeneous polynomial $P$ of degree $k$ such that $V$ satisfies 
\be 
 V(x) = \frac{P(x)}{|x|^{n-2+2k}} + O \left(  \frac{1}{|x|^{n-2+k+\alpha}}\right)   \quad \text{ as } |x| \to + \infty.
 \ee

 \end{itemize}
All these expansions can be differentiated. In dimensions $n=3,4$, $P$ is the first non-zero homogeneous polynomial in the Taylor expansion of $V^*$ at $0$, which always exists by classical finite continuation results (see e.g. Aronszajn \cite{Aronszajn}) since $V \in D^{1,2}(\R^n) \backslash \{0\}$. In dimensions $n \ge 5$, $P$ may vanish. Examples of solutions in any dimension $n\ge 3$ for which the degree of $P$ is larger or equal than $2$ are still unknown and the question of their existence was raised in Duyckaerts-Kenig-Merle\cite{DuyckaertsKenigMerle}.
\end{remark}

\section{Riemannian preliminaries}\label{sec:weyl}

\subsection{Sign-changing families of bubbles in $M$}
We define the notion of Riemannian bubble that we will consider in this work:

\begin{defi}\label{def:bubble} Let $(\xe )_{\ve>0}\in M$ and $(\me)_{\ve>0}\in \rr_{>0}$ be families of points in $M$ and positive real numbers, with $\mu_{\ve} \to  0$ as $\ve \to 0$. We let $x_0:=\lim_{\ve\to 0}\xe $. Let $V\in \Sigma$ be fixed and let $B = (B_\ve)_{\ve >0}$ be a family of functions in $H^1(M)$. We say that $B$ is a \emph{family of bubbles centered at $x_\ve$ with radius $\mu_\ve$ and modeled on $V$ } if there exists a family of charts $(\varphi_\ve)_{\ve\geq 0}$ such that $\varphi_\ve: B_g(x_0, r_g) \to \R^n$ is normal at $\xe$ for all $\ve>0$, $\lim_{\ve\to 0}\varphi_\ve=\varphi_0$ in $C^k$ for all $k$ and  such that 
\begin{equation}\label{exp:bubble:0}
B_\ve - \chi(d_g(\cdot,\xe ))\me^{-\frac{n-2}{2}}V\left(\frac{\varphi_\ve(\cdot)-\varphi_\ve(\xe)}{\me}\right) \to 0 \quad \text{ in } H^1(M) 
\end{equation}
as $\ve \to 0$. 
\end{defi}
Here $\Sigma$ is as in \eqref{defSigma}. As the following simple result shows, Definition \ref{def:bubble} is independent of the choice of the local chart up to an isometry. In the sequel, we let $O(n)$ be the group of isometries of the Euclidean space $\rn$:  
\begin{prop}\label{prop:unique:bubble} 
Let $(\xe )_{\ve>0}\in M$ and $(\me)_{\ve>0}\in \rr_{>0}$ be families of points in $M$ and positive real numbers, with $\mu_{\ve} \to  0$ as $\ve \to 0$, and let $B=(B_\ve)_\ve$ and $\bar B=(\bar B_\ve)_\ve$ be families of bubbles centered at $x_\ve$ with radius $\mu_\ve$ and respectively modeled on $V, \bar V \in \Sigma$ as in Definition \eqref{def:bubble}.  Then 
$$B_\ve -\bar B_\ve \to 0 \text{ in } H^1(M)  \text{ as } \ve \to 0 
\quad \iff \quad  
\bar{V}=V\circ O$$
for some orthogonal mapping $O\in O(n)$. 
\end{prop}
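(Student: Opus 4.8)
The plan is to prove both implications of the equivalence by reducing everything, via the $H^1$-convergence in Definition \ref{def:bubble}, to a statement about the rescaled profiles in a fixed Euclidean ball.

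\medskip

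\noindent\textbf{Setup.} Write $\psi_\ve(\cdot):=\frac{\varphi_\ve(\cdot)-\varphi_\ve(\xe)}{\me}$ for the chart producing $B_\ve$ and $\bar\psi_\ve$ for the chart producing $\bar B_\ve$; both are normal at $\xe$ with $\lim_\ve\varphi_\ve=\varphi_0$, $\lim_\ve\bar\varphi_\ve=\bar\varphi_0$ in $C^k$. The map $O_\ve:=\bar\psi_\ve\circ\psi_\ve^{-1}$, defined on larger and larger balls of $\rn$ as $\ve\to0$, is a smooth diffeomorphism; since $\varphi_\ve,\bar\varphi_\ve$ are both normal at $\xe$, the differential $d(\bar\varphi_\ve\circ\varphi_\ve^{-1})$ at $\varphi_\ve(\xe)$ sends the identity metric to the identity metric, hence is an element $S_\ve\in O(n)$, and $O_\ve(y)=S_\ve y + o(|y|)$ as $y\to0$ uniformly in $\ve$, with $O_\ve\to O_0:=\bar\varphi_0\circ\varphi_0^{-1}$ (rescaled) in $C^k_{loc}$. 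Because $\me\to0$, on any fixed ball $B_R(0)$ the image $O_\ve(B_R(0))$ shrinks back toward a ball and $O_\ve$ converges in $C^k(B_R(0))$ to the linear map $O:=S_0\in O(n)$ where $S_0=\lim_\ve S_\ve$ (after passing to a subsequence; $O(n)$ is compact). This is the key point: rescaling at a vanishing scale linearises the transition chart.

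\medskip

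\noindent\textbf{The reverse implication.} Suppose $\bar V=V\circ O$ for some $O\in O(n)$. Then $\bar\psi_\ve(\xe)=0$ and, changing variables in the $H^1$ norm (which on $B_g(x_0,r_g)$ is, up to bounded constants depending only on the $C^1$ norms of the charts, comparable to the Euclidean $H^1$ norm of the pushed-forward functions on a Euclidean ball), one computes
\[
\me^{-\pui}V(\psi_\ve(\cdot)) - \me^{-\pui}\bar V(\bar\psi_\ve(\cdot)) = \me^{-\pui}\Big( V(\psi_\ve(\cdot)) - V\big(O\,O_\ve^{-1}\psi_\ve(\cdot)\big)\Big),
\]
wait — more cleanly: set $y=\psi_\ve(x)$, so $\bar\psi_\ve(x)=O_\ve(y)$, and the squared $H^1(M)$ difference of the two bubble models is controlled by $\int_{B_R}\big(|\nabla(V(y)-\bar V(O_\ve(y)))|^2+\me^2|V(y)-\bar V(O_\ve(y))|^2\big)dy$ plus a tail term $\int_{|y|\gtrsim r_g/\me}(|\nabla V|^2+|\nabla \bar V|^2)$ which vanishes by $V,\bar V\in D^{1,2}(\rn)$. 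On the fixed ball, $\bar V(O_\ve(y))=V(O\,O_\ve(y))\to V(O\cdot O^{-1}y)=V(y)$? — not quite; one must note $O_\ve\to O$ in $C^1_{loc}$ so $\bar V(O_\ve(\cdot))\to\bar V(O(\cdot))=V(O\circ O(\cdot))$, and one needs this to equal $V$. Here is where the statement must be read correctly: the conclusion is $\bar V = V\circ O$ with $O$ the limiting linear transition map itself; with that identification $\bar V(O_\ve(y))\to V(O(O_\ve(y)))$ and $O(O_\ve(y))\to O(O(y))$, and one uses $\bar V(\bar\psi_\ve(x))=\bar V(O_\ve\psi_\ve(x))$ together with $\bar V=V\circ O_0$... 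The honest structure is: the difference $B_\ve-\bar B_\ve\to0$ is equivalent, after the change of variables and discarding vanishing tails, to $\bar V(O_\ve(\cdot))\to V$ in $D^{1,2}(\rn)$, and since $O_\ve\to O$ in $C^1_{loc}$ and $O_\ve$ are uniformly bi-Lipschitz, $\bar V(O_\ve(\cdot))\to \bar V(O(\cdot))$ strongly in $D^{1,2}$ (dominated convergence on compacts plus uniform tail smallness from the $D^{1,2}$ membership of $\bar V$). Hence $B_\ve-\bar B_\ve\to0$ in $H^1(M)$ $\iff$ $\bar V\circ O=V$, i.e. $\bar V=V\circ O^{-1}$, and $O^{-1}\in O(n)$ too.

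\medskip

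\noindent\textbf{The forward implication.} Conversely, assume $B_\ve-\bar B_\ve\to0$ in $H^1(M)$. Subtracting \eqref{exp:bubble:0} for $B$ and for $\bar B$, we get
\[
\chi(d_g(\cdot,\xe))\me^{-\pui}\Big(V(\psi_\ve(\cdot))-\bar V(\bar\psi_\ve(\cdot))\Big)\to 0 \quad\text{in }H^1(M).
\]
Localising on $B_g(x_0,r_g)$ and changing variables $y=\psi_\ve(x)$ as above, and using that the metric coefficients in the normal chart converge to $\delta_{ij}$ in $C^k_{loc}$ after rescaling (so the Euclidean $H^1$ norm on $B_R(0)$ and the pulled-back $H^1(M)$ norm are uniformly equivalent), we obtain $V - \bar V\circ O_\ve \to 0$ in $H^1_{loc}(\rn)$, hence (the $D^{1,2}$-tails being uniformly small) in $D^{1,2}(\rn)$. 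Passing to a subsequence along which $O_\ve\to O$ in $C^k_{loc}$ with $O\in O(n)$ (compactness of $O(n)$ for the linear part, and the $o(|y|)$-error disappearing on the shrinking balls $O_\ve(B_R)$ at scale $\me\to0$), we get $\bar V\circ O_\ve\to \bar V\circ O$ in $D^{1,2}$ by the same uniform bi-Lipschitz/dominated-convergence argument as before. Therefore $V=\bar V\circ O$, i.e. $\bar V = V\circ O^{-1}$ with $O^{-1}\in O(n)$, which is the claim. Since the limit $\bar V\circ O$ is independent of the subsequence, no subsequence extraction is needed in the final statement.

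\medskip

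\noindent\textbf{Main obstacle.} The delicate point is justifying that the transition map $O_\ve$, read at the blow-up scale $\me$, converges to a \emph{global linear isometry} of $\rn$ on every fixed ball: one must combine (i) the fact that $\varphi_\ve,\bar\varphi_\ve$ are both geodesic normal coordinates at $\xe$, which forces the differential of the transition at the center to lie in $O(n)$ and the chart to agree with its linearisation to second order, with (ii) the scaling $\me\to0$, which sends the nonlinear remainder $O(|y|^2/\me)\cdot\me = O(\me|y|^2)$... — more precisely the rescaled remainder is $O(\me |y|^2)\to 0$ on $B_R(0)$. Handling this uniformly, together with controlling the $D^{1,2}$-tails so that local convergence upgrades to global $D^{1,2}$ convergence (which is needed to identify the limit as exactly $V\circ O$ rather than merely a weak limit), is the technical heart of the argument; everything else is a change of variables.
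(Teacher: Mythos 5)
Your proof of the forward implication (the one the paper actually writes out) is correct and follows essentially the same route as the paper: compose the two normal charts, observe that at scale $\mu_\ve$ the transition map becomes linear in the limit with differential in $O(n)$, change variables and discard the $D^{1,2}$-tails, and read off $\bar V = V\circ O$ with $O$ the limit of the linearized transitions. The only cosmetic difference is that you work directly with the rescaled transition diffeomorphism $O_\ve=\bar\psi_\ve\circ\psi_\ve^{-1}$, whereas the paper isolates the linear part $O_\ve=D\varphi_\ve(\xe)\circ D\bar\varphi_\ve(\xe)^{-1}$ and absorbs the quadratic remainder into \eqref{eq:normalcharts2}; both give the same thing because the remainder is $O(\mu_\ve|y|^2)$ on fixed balls, as you correctly use in the ``Main obstacle'' paragraph (though the intermediate claim ``$O_\ve(y)=S_\ve y+o(|y|)$ uniformly in $\ve$'' is slightly off; what you actually need and indeed use is the estimate $O(\mu_\ve|y|^2)$, uniform on $|y|\leq R$).

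On the reverse implication, your discussion is tangled but your instinct is right and worth spelling out: read literally, ``$\bar V = V\circ O$ for \emph{some} $O\in O(n)$'' does \emph{not} by itself force $B_\ve-\bar B_\ve\to 0$, since the two families are tied to their own (possibly misaligned) charts, and $V$ need not be invariant under the resulting relative rotation. What your change of variables actually yields is the precise equivalence $B_\ve-\bar B_\ve\to 0 \iff \bar V\circ O = V$ where $O$ is the \emph{specific} $C^1_{loc}$-limit of the rescaled chart transitions, not an arbitrary element of $O(n)$. The paper sidesteps this by only proving $\Rightarrow$ (which is all that is used: it guarantees that $\hbox{Weyl}_g\otimes B$ is well-defined on equivalence classes). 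You ended up proving the refined two-sided statement, which is fine, but you should state clearly up front that the $O$ in the $\Leftarrow$ direction is the transition-limit, rather than opening with an arbitrary $O\in O(n)$ and then walking it back mid-argument.
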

Families of bubbles $(B_\ve)_{\ve >0}$ and $(\bar B_{\ve})_{\ve >0}$ centered at $x_\ve$ with radius $\mu_\ve$ and satisfying $B_\ve -\bar B_\ve \to 0$  in $H^1(M)$ as $\ve \to 0$ will be called \emph{equivalent} families. 

\begin{proof}
Let $\varphi_\ve, \bar{\varphi}_\ve : B_g(x_0, r_g) \to \R^n$ be normal coordinate charts for $g$ at $\xe$ associated to $B$ and $\bar B$ as in definition \ref{def:bubble}. If $x \in \R^n$ we have
\begin{equation} \label{eq:normalcharts}
\varphi_\ve \circ \bar{\varphi}_\ve^{-1}( \bar{\varphi}_\ve(\xe)+x) = \varphi_\ve(\xe)+O_\ve x + O(|x|^2) \quad \text{ as } x \to 0, 
 \end{equation}
where we have let $O_\ve = D\varphi_\ve (\xe) \circ \big(D\bar{\varphi}_\ve(\xe) \big)^{-1}$. Since $ \varphi_\ve$ and $\bar{\varphi}_\ve$ are normal charts at $\xe$, $O_\ve \in O(n)$. Straightforward computations using \eqref{eq:decayV} and \eqref{eq:normalcharts} show that
\begin{equation} \label{eq:normalcharts2}
 \chi(d_g(\cdot,\xe )) \Bigg[ \me^{-\frac{n-2}{2}}V\left(\frac{\varphi_\ve(\cdot)-\varphi_\ve(\xe)}{\me}\right)  -
\me^{-\frac{n-2}{2}}V\left(\frac{O_\ve \left(\bar{\varphi}_\ve(\cdot)-\bar{\varphi}_\ve(\xe)\right) }{\me}\right)\Bigg]   \to 0 
\end{equation}
in $H^1(M)$ as $\ve \to 0$. Since $B_\ve - \bar B_\ve \to 0$ in $H^1(M)$ by assumption we obtain
$$ \chi(d_g(\cdot,\xe )) \Bigg[  
\me^{-\frac{n-2}{2}}V\left(\frac{O_\ve \left(\bar{\varphi}_\ve(\cdot)-\bar{\varphi}_\ve(\xe)\right) }{\me}\right) - \me^{-\frac{n-2}{2}}\bar V\left(\frac{ \bar{\varphi}_\ve(\cdot)-\bar{\varphi}_\ve(\xe)  }{\me}\right)  -\Bigg]   \to 0 $$
in $H^1(M)$ as $\ve \to 0$. A simple scaling argument then shows that $V \circ O = \bar V$ where $O:=\lim_{\ve\to 0}O_\ve$. 
\end{proof}
Equations \eqref{eq:normalcharts} and \eqref{eq:normalcharts2} show that, up to composing $V$ with an euclidean isometry, we can always assume that $\vp_{\ve}$ is an exponential chart at $x_\ve$ in Definition \ref{def:bubble}. Choose, for any $p \in B_g(x_0, r_g)$, an identification of $T_p M$ with $\R^n$. This defines an exponential map at $p$, that we will denote by $\exp_p: \R^n \to M$ in all of this paper. Mimicking the arguments in \eqref{eq:normalcharts} and \eqref{eq:normalcharts2} we similarly have, for $V \in \Sigma$,  
\begin{equation*}\chi(d_g(\cdot,\xe )) \Bigg[ \me^{-\frac{n-2}{2}}V\left(\frac{\varphi_\ve(\cdot)-\varphi_\ve(\xe)}{\me}\right)  - \me^{-\frac{n-2}{2}}\big( V \circ O \big) \left(\frac{\exp_{x_\ve}^{-1}(\cdot)}{\me}\right)\Bigg]   \to 0 
\end{equation*}
for some $O \in O(n)$. We have thus shown that $B = (B_{\ve})_{\ve >0}$ is a \emph{family of bubbles centered at $x_\ve$ with radius $\mu_\ve$ and modeled on $V\in \Sigma$} if and only if 
\begin{equation}\label{exp:bubble}
B_\ve - \chi(d_g(\cdot,\xe ))\me^{-\frac{n-2}{2}}\big( V \circ O \big)\left(\frac{\exp_{x_\ve}^{-1}(\cdot)}{\me}\right) \to 0 \quad \text{ in } H^1(M) 
\end{equation}
as $\ve \to 0$, for some $O \in O(n)$. Any two exponential charts at $x_\ve$ are related by an isometry of $\R^n$. A consequence of  \eqref{exp:bubble} is that Definition \ref{def:bubble} is independent of the choice of the exponential chart at $x_\ve$ up to composing $V$ with an isometry. The classical example of a family of bubbles is the so-called family of standard bubbles:
\begin{equation}\label{std:bubble}
\begin{aligned} 
B_\ve(x)& :=\chi(d_g(x,\xe ))B_{\ve}^+(x), \hbox{ where } \\
B_\ve^+(x)& :=\left(\frac{\me}{\me^2+\frac{d_g(x,\xe )^2}{n(n-2)}}\right)^{\frac{n-2}{2}} = \me^{-\frac{n-2}{2}}B_0^+\left(\frac{\exp_{x_\ve}^{-1}(x)}{\me}\right),
\end{aligned}
\end{equation}
where $B_0^+$ is given by \eqref{B0}. This example is peculiar: since $B_0^+$ is radially symmetrical (as a consequence of positivity), any exponential chart at $x_\ve$ yields the same family of bubbles, and the function $B_\ve^+$ thus obtained is well-defined on the whole of $M$. Naively composing with an exponential chart does not, however, allow to globally pull-back a sign-changing element $V \in \Sigma$ in $M$ without losing regularity at the cut-locus, hence the need for Definition \ref{def:bubble}.

\medskip

  The following proposition is straightforward:
\begin{prop}\label{lim:bubble} Let $(B_\ve)_{\ve>0}$ be a family of bubble as in Definition \ref{def:bubble}. Then 
\begin{itemize}
\item There exists $C>0$ such that $\Vert B_\ve\Vert_{H^1}\leq C$ for all $\ve >0$
\item $\lim_{\ve\to 0}\Vert B_\ve\Vert_{L^2}=0$
\item $B_\ve \rightharpoonup 0$ weakly in $H^1(M)$ as $\ve\to 0$ 
\item $\lim_{\ve\to 0}\Vert \nabla B_\ve\Vert_2^2=\int_{\rn}|\nabla V|^2\, dx$.
\end{itemize}
\end{prop}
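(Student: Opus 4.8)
The statement to prove is Proposition \ref{lim:bubble}, which collects four elementary consequences of Definition \ref{def:bubble}. The plan is to write everything in terms of the model function $W_\ve := \chi(d_g(\cdot,\xe))\me^{-\frac{n-2}{2}}V\left(\frac{\exp_{x_\ve}^{-1}(\cdot)}{\me}\right)$ (composing $V$ with an isometry if needed, using \eqref{exp:bubble}), so that $B_\ve = W_\ve + o(1)$ in $H^1(M)$. Since the four claims are stable under $H^1$-perturbations that go to zero — the first three are obviously so, and the last because $\Vert\nabla B_\ve\Vert_2 \to \Vert\nabla W_\ve\Vert_2$ whenever $B_\ve - W_\ve \to 0$ in $H^1$ — it suffices to establish each assertion for $W_\ve$ itself. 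This reduces the proposition to a change-of-variables computation in a fixed normal chart together with the decay estimate \eqref{eq:decayV}.

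The main computation is as follows. Work in the exponential chart, writing $g_\ve$ for the pulled-back metric, which satisfies $g_\ve(y) \to \xi$ in $C^k_{loc}$ with uniform bounds as $\ve \to 0$ (equivalently, $\sqrt{|g_\ve(\me z)|} \to 1$ locally uniformly). Performing the substitution $y = \me z$ in the integrals defining $\Vert W_\ve\Vert_2^2$, $\Vert\nabla W_\ve\Vert_2^2$ and $\int W_\ve^2$, the factors $\me^{-(n-2)}$ and $\me^n$ (resp. $\me^{-(n-2)} \me^{-2}$ and $\me^n$ for the gradient) combine to give, schematically,
\begin{equation*}
\Vert W_\ve\Vert_{L^2}^2 = \me^2 \int_{\R^n} \chi(\me z)^2 V(z)^2 \sqrt{|g_\ve(\me z)|}\, dz + o(1), \qquad
\Vert\nabla W_\ve\Vert_{L^2}^2 = \int_{\R^n}\chi(\me z)^2 |\nabla V(z)|_{g_\ve(\me z)}^2\sqrt{|g_\ve(\me z)|}\,dz + o(1),
\end{equation*}
where the $o(1)$ absorbs the terms coming from derivatives hitting the cutoff $\chi(\me\,\cdot)$ — these are supported on $|z| \gtrsim r_g/(2\me) \to \infty$ and are controlled by \eqref{eq:decayV}, which gives $\int_{|z|\ge R} (|V|^2 + |\nabla V|^2) \to 0$ as $R \to \infty$ since $2(n-2) > n$. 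The decay estimate \eqref{eq:decayV} also shows $V \in L^2(\R^n)$ and $\nabla V \in L^2(\R^n)$, so $\Vert W_\ve\Vert_{L^2}^2 = O(\me^2) \to 0$, giving the second bullet; and by dominated convergence (using $g_\ve(\me z) \to \xi$ pointwise, $\chi(\me z) \to 1$ pointwise, and an $L^1$ domination of the integrand via \eqref{eq:decayV} plus the uniform metric bounds) one gets $\Vert\nabla W_\ve\Vert_{L^2}^2 \to \int_{\R^n}|\nabla V|^2\,dx$, which is the fourth bullet. The first bullet (uniform $H^1$ bound) is then immediate since $\Vert W_\ve\Vert_{H^1}^2 = \Vert W_\ve\Vert_{L^2}^2 + \Vert\nabla W_\ve\Vert_{L^2}^2$ is bounded. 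For the third bullet, boundedness in $H^1(M)$ gives a weakly convergent subsequence; its weak limit must be $0$ because $W_\ve \to 0$ strongly in $L^2(M)$ (second bullet), hence a fortiori $W_\ve \rightharpoonup 0$ in $L^2$, and the weak $H^1$-limit coincides with the weak $L^2$-limit; since the limit is the same ($0$) along every subsequence, the full family converges weakly.

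There is no serious obstacle here; the only point requiring a little care is the bookkeeping near the cut-locus, i.e. checking that the contributions of $\nabla[\chi(d_g(\cdot,\xe))]$ and of the region where $\chi(\me z) \ne 1$ genuinely vanish in the limit. This is exactly where the fast decay $|V(x)| + |x||\nabla V(x)| = O(|x|^{2-n})$ from \eqref{eq:decayV} is used, together with the fact that $r_g$ is fixed and $\me \to 0$ so these regions escape to infinity in the rescaled variable. I would dispatch this with the single tail estimate $\int_{|z| \ge R}(1+|z|)^{2(2-n)}(1+|z|)^{?}\,dz \to 0$ and the uniform control on $g_\ve$ and $\chi$, and otherwise invoke dominated convergence.
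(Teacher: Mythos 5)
Your reduction to the model profile $W_\ve$ and the change-of-variables scheme is exactly the intended "straightforward" argument (the paper gives none), and the stability of the four bullets under $H^1$-small perturbations is correctly handled. However, there is a genuine gap in dimensions $n=3,4$: you claim that \eqref{eq:decayV} implies $V\in L^2(\R^n)$, and you use this twice, once to conclude $\Vert W_\ve\Vert_{L^2}^2=O(\me^2)$ and once for the tail bound "$\int_{|z|\ge R}(|V|^2+|\nabla V|^2)\to 0$". But $V(x)=O(|x|^{2-n})$ only gives $\int_1^\infty r^{2(2-n)}r^{n-1}\,dr=\int_1^\infty r^{3-n}\,dr$, which converges iff $n\ge 5$; for $n=4$ it diverges logarithmically and for $n=3$ it diverges linearly. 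So for $n\le 4$ one has $V\notin L^2(\R^n)$, and the inequality $2(n-2)>n$ you invoke simply fails.

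The conclusion of the proposition is nevertheless true in all dimensions $n\ge 3$, and the fix is contained in your own formula: after the substitution you have
$\Vert W_\ve\Vert_{L^2}^2=\me^2\int_{B(0,r_g/\me)}\chi(\me z)^2 V(z)^2\sqrt{|g_\ve(\me z)|}\,dz + o(1)$,
and the point is that the remaining integral need not be $O(1)$ — it is $O(1)$ for $n\ge5$, $O(\log\frac{1}{\me})$ for $n=4$, and $O(\me^{-1})$ for $n=3$ by \eqref{eq:decayV}; in every case it is $o(\me^{-2})$, so $\Vert W_\ve\Vert_{L^2}^2\to 0$. Similarly the cutoff-derivative contribution to $\Vert\nabla W_\ve\Vert_{L^2}^2$, supported where $r_g/(2\me)\le|z|\le r_g/\me$, should be bounded directly: after rescaling it is $\lesssim \me^2\int_{r_g/(2\me)}^{r_g/\me} r^{3-n}\,dr=O(\me^{n-2})$ for $n\ne 4$ and $O(\me^2)$ for $n=4$, which is $o(1)$ without ever asserting $V\in L^2$. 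With these corrections the rest of your argument (uniform $H^1$ bound, weak convergence via Rellich and uniqueness of subsequential limits, and dominated convergence for $\Vert\nabla W_\ve\Vert_2^2\to\Vert\nabla V\Vert_2^2$ using $\nabla V\in L^2$, which \emph{is} valid in all dimensions since $V\in D^{1,2}$) goes through unchanged.
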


\subsection{The product $\hbox{Weyl}_g\otimes B$}

We now investigate the properties of the term $ \hbox{Weyl}_g\otimes B$ appearing in the Statement of Theorem \ref{maintheo}. 
\begin{defi}\label{def:tensor:B} Assume that $n \ge 5$. Let $B=(B_\ve)_{\ve>0}$ be a family of bubbles centered at $x_\ve$ with radius $\mu_\ve$ and modeled on $V \in \Sigma$ as in Definition \ref{def:bubble}. 
We define 
\begin{equation} \label{eq:tensor:B}
\begin{aligned}
\hbox{Weyl}_g\otimes B&:= \frac{4n}{3(n-2)^2}  \left( \int_{\R^n} |V|^{\crit }dx \right)^{-1}\times \\
 &\hbox{Weyl}_g(x_0)_{i\alpha j\beta}    \int_{\rn}x^\alpha x^\beta \partial_{ij}^2 V \left(\frac{n-2}{2}V+x^l\partial_l V\right)\, dx
\end{aligned}
\end{equation}
where $x_0 = \lim_{\ve \to 0} x_\ve$ and where the coordinates of the Weyl tensor are taken with respect to the chart $\exp_{x_0}$. 
\end{defi}

It is intended in \eqref{eq:tensor:B} that repeated indices are summed over. That the second integral in \eqref{eq:tensor:B} is finite follows from \eqref{eq:decayV} since $n \ge 5$. Note that the term $\hbox{Weyl}_g \otimes B$ depends on $x_0$ since the family $B = (B_{\ve})_\ve >0$ does. We recall the symmetries of the Weyl tensor of $g$ at any point $x \in M$:
$$ \begin{aligned}
& \hbox{Weyl}_g(x)_{ijk\ell} = - \hbox{Weyl}_g(x)_{jik\ell} = - \hbox{Weyl}_g(x)_{ij \ell k} = \hbox{Weyl}_g(x)_{k\ell ij} \\
& \hbox{Weyl}_g(x)_{iji\ell} = 0 \text{ and }  \hbox{Weyl}_g(x)_{ijk\ell} + \hbox{Weyl}_g(x)_{jki\ell} + \hbox{Weyl}_g(x)_{kij\ell} = 0 \\
\end{aligned} $$
for any $i,j,k,\ell \in \{1, \dots, n\}$, where repeated indices are summed over. As a consequence of these symmetries, simple integration by parts (see for instance Hebey-Vaugon \cite{hvmz} or Lemma 3.2 in Mesmar-Robert \cite{mesmarrobert}) show that 
\begin{equation} \label{eq:tensor:B2}
\begin{aligned} 
\hbox{Weyl}_g\otimes B  = \frac{4n}{3(n-2)^2}  &\left( \int_{\R^n} |V|^{\crit }dx \right)^{-1} \times \\
& \hbox{Weyl}_g(x_0)_{i\alpha j\beta} \int_{\rn} x^\alpha x^\beta \partial_i V \partial_j V dx. \\
\end{aligned}
\end{equation}
Again the integral on the right is finite when $n \ge 5$ by \eqref{eq:decayV}. A consequence of \eqref{eq:tensor:B2} and of the tensorial nature of $\hbox{Weyl}_g(x_0)$ is that $\hbox{Weyl}_g\otimes B$ does not change if $V$ is replaced by $V \circ O$ for some $O \in O(n)$, provided the coordinates of $\hbox{Weyl}_g(x_0)$ are now computed in the chart $O \circ \exp_{x_0}^{-1}$. Hence $\hbox{Weyl}_g\otimes B$ does not depend on the choice of the exponential chart at $x_0$ and only depends on the equivalence class of families of bubbles at $x_0$ in the sense of Proposition \ref{prop:unique:bubble}.

We describe two simple cases where $\hbox{Weyl}_g \otimes B$ vanishes globally:

\begin{prop}\label{prop:WB} Assume that $n \ge 5$ and let $B$ be a family of bubbles centered at $(\xe )_\ve\in M$, with radius $(\me)_\ve\in \rr_{>0}$ and modeled on $V \in \Sigma$, and $x_0 = \lim_{\ve \to 0} x_\ve$. Then $\hbox{Weyl}_g\otimes B=0$ in the following (non-exhaustive) situations:
\begin{itemize}
\item $\hbox{Weyl}_g(x_0)=0$ 
\item $V$ is radial (and hence $V = B_0^+$ given by \eqref{B0}). 
\end{itemize}
\end{prop}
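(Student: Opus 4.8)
The plan is to work from the expression \eqref{eq:tensor:B2}, which rewrites $\hbox{Weyl}_g \otimes B$ as a dimensional constant times $\hbox{Weyl}_g(x_0)_{i\alpha j\beta} \int_{\rn} x^\alpha x^\beta \partial_i V \partial_j V\, dx$, since this is the form of the product where the symmetries are easiest to exploit. The first bullet is immediate: if $\hbox{Weyl}_g(x_0)=0$, then every coefficient in the sum vanishes, so $\hbox{Weyl}_g \otimes B=0$ regardless of $V$; I would dispose of this in one sentence.

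The substance is the second bullet. First I would recall that by Obata \cite{Obata} (as noted after \eqref{B0}) a radial element of $\Sigma$ is, up to dilation, equal to $B_0^+$; since $\hbox{Weyl}_g \otimes B$ is dilation-invariant — the integral $\int x^\alpha x^\beta \partial_i V \partial_j V\, dx$ scales by the same power under $V(\cdot) \mapsto \mu^{-(n-2)/2} V(\cdot/\mu)$ as does $\int |V|^{\crit}$, as one checks by a change of variables — it suffices to treat $V = B_0^+$, or more simply any radial $V$. For radial $V$ write $\partial_i V = V'(r)\, x_i/r$ with $r = |x|$, so that $\partial_i V \partial_j V = (V'(r))^2 x_i x_j / r^2$. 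Then the tensor to contract against $\hbox{Weyl}_g(x_0)_{i\alpha j\beta}$ is $T^{i\alpha j\beta} := \int_{\rn} (V'(r))^2 r^{-2}\, x_i x_\alpha x_j x_\beta\, dx$. The key point is that this is an isotropic $4$-tensor: by the rotational invariance of the integrand's radial weight, $T^{i\alpha j\beta}$ is invariant under the diagonal $O(n)$-action, hence is a linear combination of the three products of Kronecker deltas, $T^{i\alpha j\beta} = c\,(\delta^{i\alpha}\delta^{j\beta} + \delta^{ij}\delta^{\alpha\beta} + \delta^{i\beta}\delta^{\alpha j})$ for some scalar $c = c(n,V)$ (indeed $c = \frac{1}{n(n+2)}\int (V'(r))^2 r^2\, dx$ by taking a trace, though the precise value is not needed). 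Contracting each of the three delta-pairs against $\hbox{Weyl}_g(x_0)_{i\alpha j\beta}$ gives zero: the pair $\delta^{i\alpha}\delta^{j\beta}$ produces $\hbox{Weyl}_g(x_0)_{iijj}=0$ by antisymmetry in the first two indices; likewise $\delta^{i\beta}\delta^{\alpha j}$ gives $\hbox{Weyl}_g(x_0)_{i\alpha\alpha i}=0$ by antisymmetry in the last two; and $\delta^{ij}\delta^{\alpha\beta}$ gives $\hbox{Weyl}_g(x_0)_{i\alpha i\beta}$ summed over $i$, which is a trace of the Weyl tensor and hence $0$ (this is exactly the listed identity $\hbox{Weyl}_g(x)_{iji\ell}=0$, after relabeling). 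Therefore the full contraction vanishes and $\hbox{Weyl}_g \otimes B = 0$.

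The only step requiring a little care — and the one I expect to be the main obstacle to write cleanly — is the isotropy claim for $T^{i\alpha j\beta}$: one must justify that a fourth-moment integral of a radial density is an isotropic tensor and therefore lies in the $3$-dimensional span of symmetrized Kronecker products. This is standard (it follows from averaging over $O(n)$ and Schur-type arguments, or simply from the classical formula for $\int_{\sph^{n-1}} \omega_i\omega_\alpha\omega_j\omega_\beta\, d\sigma$), so I would either cite it or include a two-line averaging argument, and then the rest is bookkeeping with the Weyl symmetries already recorded before the statement. An alternative, slightly slicker route avoids isotropy altogether: since $V$ is radial the normal chart may be chosen arbitrarily, so for each fixed pair of indices one rotates coordinates so that the relevant directions are standard basis vectors and reduces the contraction to a sum of Weyl traces; but the isotropic-tensor argument is cleaner to present, so that is the one I would carry out.
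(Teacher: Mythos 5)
Your proof is correct and rests on the same ingredients as the paper's one-line argument (the rewriting \eqref{eq:tensor:B2}, radiality of $V$, and the symmetries of the Weyl tensor), but you take a longer road than necessary. The isotropic-tensor decomposition is overkill here: the moment tensor $T^{i\alpha j\beta}=\int (V'(r))^2 r^{-2} x_i x_\alpha x_j x_\beta\,dx$ is totally \emph{symmetric} in all four indices (it is a fourth moment of $x$), in particular symmetric under $i\leftrightarrow\alpha$, while $\hbox{Weyl}_g(x_0)_{i\alpha j\beta}$ is \emph{antisymmetric} under $i\leftrightarrow\alpha$; the contraction of a tensor symmetric in a pair of indices against one antisymmetric in that pair vanishes identically. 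That is all that is needed, and it is what the paper means by invoking the antisymmetry of the Weyl tensor. Your detour through the decomposition $T^{i\alpha j\beta}=c(\delta^{i\alpha}\delta^{j\beta}+\delta^{ij}\delta^{\alpha\beta}+\delta^{i\beta}\delta^{\alpha j})$ is valid, but note one small slip in the verification: for the term $\delta^{i\beta}\delta^{\alpha j}\hbox{Weyl}_g(x_0)_{i\alpha j\beta}=\sum_{i,\alpha}\hbox{Weyl}_g(x_0)_{i\alpha\alpha i}$, antisymmetry in the last two indices alone does not make this vanish for fixed $i,\alpha$; one first writes $\hbox{Weyl}_g(x_0)_{i\alpha\alpha i}=-\hbox{Weyl}_g(x_0)_{i\alpha i\alpha}$ and then invokes the trace-free identity $\hbox{Weyl}_g(x_0)_{iji\ell}=0$. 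The symmetric/antisymmetric contraction argument avoids this bookkeeping entirely.
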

\begin{proof} By \eqref{eq:tensor:B} the first point is trivial. If $V = B_0^+$ is radial, the second point follows from the radiality of $B_0^+$, from the equivalent expression \eqref{eq:tensor:B2} and from the total antisymmetry of the Weyl tensor. 
\end{proof}

The next two results shows that $\hbox{Weyl}_g \otimes B$ cannot be expected to vanish in general:

\begin{prop}\label{prop:weyl:negative} Let $(\sph^p,g_p)$ and $(\sph^q, g_q)$ be the unit spheres of dimensions $p,q\geq 2$ endowed with their round metrics. Assume that $n=p+q\geq 5$ and that $V\in D^{1,2}(\rn)\backslash \{0\}$ solves \eqref{yamabe:intro} and is invariant under the action of $O(p)\times O(q)$ but not under the action of $O(n)$. Let $M=\sph^p\times \sph^q$ endowed with the metric $g=g_p\otimes g_q$. Let $B=(B_\ve)_{\ve >0}$ be a family of bubbles in $(M,g)$ modeled on $V$ as in Definition \ref{def:bubble}, centered at $(\xe )_{\ve>0}\in M$ and with radius $(\me)_{\ve>0}\in \rr_{>0}$ and $x_0 = \lim_{\ve \to 0} x_\ve$. Then $\hbox{Weyl}_g\otimes B<0$. 
\end{prop}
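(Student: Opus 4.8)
The plan is to reduce everything to the computation of the Weyl tensor of a product of round spheres and to exploit the $O(p)\times O(q)$–invariance of $V$. First I would compute $\hbox{Weyl}_g(x_0)$ for $g = g_p\otimes g_q$. Since each $(\sph^p,g_p)$ has constant sectional curvature, its curvature tensor is a pure "constant-curvature" term; the curvature of the product is the direct sum. Passing to the Weyl tensor (subtracting the appropriate Schouten-tensor terms, which for a product of Einstein pieces are explicit), one gets that $\hbox{Weyl}_g(x_0)$ is, up to a universal positive constant depending only on $p,q$, the "double form" built from the difference of the two identity tensors on the two factors: schematically $\hbox{Weyl}_g(x_0)_{i\alpha j\beta}$ is a fixed linear combination of $\delta_{ij}\delta_{\alpha\beta}$-type terms restricted to the $p$-block, the $q$-block, and the mixed blocks, and a short computation shows all the traces vanish as they must. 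I will want to track the sign of the overall constant: for a product of two positively curved Einstein factors of dimensions $\ge 2$ the Weyl tensor is non-zero and has a definite structure, and this is where the hypothesis $p,q\ge 2$ and $n\ge 5$ is used.

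Next I would plug this into the trace-free expression \eqref{eq:tensor:B2}, namely
$$
\hbox{Weyl}_g\otimes B = \frac{4n}{3(n-2)^2}\Big(\int_{\R^n}|V|^{\crit}dx\Big)^{-1}\,\hbox{Weyl}_g(x_0)_{i\alpha j\beta}\int_{\rn}x^\alpha x^\beta\,\partial_iV\,\partial_jV\,dx.
$$
Set $T^{ij} := \int_{\rn} x^i x^j\,|\nabla V|^2\,dx$ — wait, more precisely I need the tensor $Q^{i j, \alpha\beta} := \int_{\rn} x^\alpha x^\beta\,\partial_iV\,\partial_jV\,dx$, but the $O(p)\times O(q)$–invariance of $V$ forces $Q$ to be block-diagonal and invariant under $O(p)\times O(q)$, so it is built from a small number of scalars: the "$pp$-block" integral $\int x^ax^b|\nabla V|^2$-type quantities reduce, on each irreducible block, to multiples of the corresponding Kronecker delta. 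Concretely, writing $x=(x',x'')\in\R^p\times\R^q$ and using invariance, one finds constants $A',A''>0$ (the squared $L^2$-weighted norms of the radial and spherical parts of $\nabla V$ over each factor) and similar mixed constants, and the contraction $\hbox{Weyl}_g(x_0)_{i\alpha j\beta}Q^{ij,\alpha\beta}$ collapses to an explicit linear combination of these, with coefficients coming from the Weyl tensor of the product. Carrying out this contraction, the constant-curvature structure of $\hbox{Weyl}_g(x_0)$ should make the answer proportional to a quantity like $\big(\tfrac{1}{p}\int_{\R^p\text{-directions}}-\tfrac{1}{q}\int_{\R^q\text{-directions}}\big)$ of suitable positive integrals, and the point of the hypothesis "$V$ not $O(n)$-invariant" is exactly that this difference cannot vanish: if it did, combined with $O(p)\times O(q)$–invariance, $V$ would have to be fully radial, contradicting the assumption (here one uses the characterization that $B_0^+$ is the unique radial solution in $\Sigma$, and an $O(n)$–invariant element of $\Sigma$ is radial). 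So the sign is strict, and I would check that it is negative — this requires orienting the sign of the universal constant in the Weyl tensor of $\sph^p\times\sph^q$ correctly, which is the one genuinely delicate bookkeeping point.

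The main obstacle I expect is this last sign determination: assembling the explicit coefficients from (i) the Weyl tensor of a product of spheres, (ii) the reduction of the weighted gradient integrals under $O(p)\times O(q)$, and (iii) an integration-by-parts/Pohozaev-type identity to relate the various weighted integrals of $|\nabla V|^2$ and of $V\partial^2 V$ to each other, so that the net coefficient multiplying the (strictly positive, strictly non-radial) "anisotropy" quantity comes out with an unambiguous negative sign. The strict non-vanishing is the soft part (it follows from the uniqueness of the radial solution), but pinning the sign to be $<0$ rather than just $\neq 0$ is where the real work lies, and I would do it by a direct, careful computation in coordinates adapted to the $\R^p\times\R^q$ splitting, double-checking against the degenerate case $\hbox{Weyl}_g\equiv 0$ (which occurs when one factor is a circle, excluded here by $p,q\ge2$) as a sanity check.
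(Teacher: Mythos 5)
Your high-level strategy matches the paper's: compute $\hbox{Weyl}_g$ for the product of round spheres in block form, substitute into the gradient form \eqref{eq:tensor:B2}, exploit the $O(p)\times O(q)$-invariance to reduce the weighted gradient integrals, and use the uniqueness of the radial solution to upgrade $\le 0$ to $<0$. However, you misjudge where the difficulty lies, and the tool you propose for the part you flag as ``the real work'' (Pohozaev-type identities to pin down the sign) is not what is needed.

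The key simplification you do not anticipate is the following. Writing $x=(X,Y)\in\R^p\times\R^q$, the $O(p)\times O(q)$-invariance forces $V(X,Y)=V(|X|,|Y|)$, so $\nabla_1 V$ is \emph{purely radial} in $X$ and $\nabla_2 V$ is \emph{purely radial} in $Y$ --- there is no ``spherical part'' of the gradient on either factor (contrary to your parenthetical remark about constants $A',A''$ for radial and spherical components). Consequently, when one contracts the pure-$\sph^p$ block $C_1(g_{ik}g_{jl}-g_{il}g_{jk})$ and pure-$\sph^q$ block $C_2(g_{\alpha\gamma}g_{\beta\delta}-g_{\alpha\delta}g_{\beta\gamma})$ of the Weyl tensor against $\int x^\alpha x^\beta\,\partial_i V\,\partial_j V\,dx$, these contributions \emph{vanish identically}: they reduce to $\int\big(|X|^2|\nabla_1 V|^2-(X\cdot\nabla_1 V)^2\big)$ and its $Y$-analogue, which are zero pointwise because $\nabla_1 V\parallel X$. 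Only the mixed block $(\hbox{Weyl}_g)_{i\alpha j\beta}=-C_3\,g_{ij}g_{\alpha\beta}$ with $C_3=\frac{2(p-1)(q-1)}{(n-1)(n-2)}>0$ survives, and its contribution collapses to
$$
-C_3\int_{\rn}\big(|X|\,\partial_2 V-|Y|\,\partial_1 V\big)^2\,dX\,dY\ \le\ 0,
$$
a perfect square with a manifestly correct sign. No Pohozaev or integration-by-parts identity is required beyond the one already encoded in passing from \eqref{eq:tensor:B} to \eqref{eq:tensor:B2}; the sign determination is purely algebraic. The strict inequality is then obtained as you suggest: vanishing of the square forces $r_2\partial_1 V=r_1\partial_2 V$, hence $V$ radial, contradicting the non-$O(n)$-invariance hypothesis. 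So your plan would eventually land in the right place, but the ``genuinely delicate bookkeeping'' you anticipate dissolves once you notice the two identical vanishings and the perfect-square structure; searching for a Pohozaev-type relation here would be a detour.
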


\begin{proof} We let $M:=\sph^p\times \sph^q$ where $p,q\geq 2$ and $p+q=n\geq 5$ and $g:=g_p\otimes g_q$ be the product metric of the round metrics $g_p$ and $g_q$ respectively on $\sph^p$ and $\sph^q$. We follow the geometric notations of Besse \cite{besse} and Hebey \cite{hebey.diderot}. The Riemann tensors for each metric are 
 $$\hbox{Rm}_{g_p}=\frac{1}{2}g_p\odot g_p\hbox{ and } \hbox{Rm}_{g_q}=\frac{1}{2}g_q\odot g_q,$$
 where $\odot$ is the Kulkarni-Nomizu product, given in coordinates by $(T\odot S)_{ijkl}=T_{ik}S_{jl}- T_{il}S_{jk}+T_{jl}S_{ik}-T_{jk}S_{il}$.  Standard results on product manifolds give that, after the necessary projections, $\hbox{Rm}_g = \frac{1}{2}g_p\odot g_p +\frac{1}{2}g_q\odot g_q$, $\hbox{Ric}_g = (p-1) g_p + (q-1) g_q$ and $S_g = p(p-1) + q(q-1)$. The Weyl tensor of $g$ is defined as
 $$\hbox{Weyl}_g=\hbox{Rm}_g-\frac{1}{n-2}\hbox{Ric}_g\odot g+\frac{S_g}{2(n-1)(n-2)}g\odot g.$$
If $x_0 = (x_0^p, x_0^q) \in \sph^p \times \sph^q$ we compute the coordinates of $\hbox{Weyl}_{g}(x_0)$ in the chart obtained from the exponential chart on $\sph^p$ at $x_0^p$ and the exponential chart on $\sph^q$ at $x_0^q$: this yields a normal chart on $\sph^p\times\sph^q$. The Latin letters $i,j,k...$ refer to coordinates in $\sph^p$, the Greek letter $\alpha,\beta,...$ refer to $\sph^q$. The previous observations show that, at any point around $x_0$ we have
 \begin{equation*}
 \begin{aligned}
 \big(\hbox{Weyl}_g\big)_{ijkl}&= C_1(g_{ik}g_{jl}-g_{il}g_{jk}) \\
 \big(\hbox{Weyl}_g\big)_{\alpha\beta\gamma\delta}& = C_2(g_{\alpha\gamma}g_{\beta\delta}-g_{\alpha\delta}g_{\beta\gamma}) \\
 \big(\hbox{Weyl}_g\big)_{i\alpha j\beta}&= -C_3 g_{ij}g_{\alpha\beta} \\
 \big(\hbox{Weyl}_g\big)_{ij\alpha\beta}&=0,
  \end{aligned}
  \end{equation*}
  with $C_1 =  \frac{ 2q(q-1)}{(n-1)(n-2) }, C_2 = \frac{ 2p(p-1)}{(n-1)(n-2) }$ and $C_3 = \frac{ 2(p-1)(q-1)}{(n-1)(n-2)}$. The mixed terms $\big(\hbox{Weyl}_g\big)_{ijk\alpha}$ and $\big(\hbox{Weyl}_g\big)_{i\alpha\beta\gamma}$ are null, and the other terms are obtained via the symmetries of the Weyl tensor. We denote the coordinates in $\R^n$ by $(X,Y)$, where $X \in \R^p$ and $Y \in \R^q$. Let $V \in \Sigma$ be $O(p) \times O(q)$ invariant and let $C_{n,V} =  \frac{4n}{3(n-2)^2}  \left( \int_{\R^n} |V|^{\crit }dx \right)^{-1}$. Using \eqref{eq:tensor:B2} and the explicit expression of $\hbox{Weyl}_g(x_0)$ we have
$$ \begin{aligned}
C_{n,V}^{-1}\hbox{Weyl}_g& \otimes B=C_1\int_{\rn}\left[|X|^2|\nabla_1 V|^2-(X,\nabla_1 V)^2\right]\, dXdY\\
&+C_2\int_{\rn}\left[|Y|^2|\nabla_2 V|^2-(Y,\nabla_2 V)^2\right]\, dXdY\\
&-C_3\int_{\rn}\Big[ |X|^2 |\nabla_2 V|^2 + |Y|^2 |\nabla_1 V|^2 - 2 (X, \nabla_1 V)(Y, \nabla_2 V) \Big]\, dX\, dY
\end{aligned} $$
where $\nabla_1 V $ and $\nabla_2 V$ denote respectively the gradient of $V$ with respect to $X \in \R^p$ and $Y \in \R^q$. 
The $O(p) \times O(q)$ invariance of $V$ implies that $V(X,Y)=V(|X|,|Y|)$, and as a consequence we have $\nabla_1 V(X,Y) = \partial_1V(X,Y) \frac{X}{|X|}$ and $\nabla_2 V(X,Y) = \partial_2V(X,Y) \frac{Y}{|Y|}$ where $\partial_1V$ and $\partial_2V$ are the partial derivatives of $(r_1,r_2)\mapsto V(r_1,r_2)$. We obtain in the end 
$$ \begin{aligned}
C_{n,V}^{-1}\hbox{Weyl}_g\otimes B&=-C_3\int_{\rn}\Big[ |X|^2 (\partial_2 V)^2+|Y|^2 (\partial_1 V)^2-2|X| |Y| \partial_1V \partial_2V\Big]\, dX\, dY\\
&=-C_3\int_{\rn}\Big( |X| \partial_2 V - |Y| \partial_1 V\Big)^2\, dX\, dY\leq 0.
\end{aligned}$$
If $\hbox{Weyl}_g\otimes B=0$, then $r_2 \partial_1V - r_1 \partial_2V$ for all   $r_1 = |X| >0$ and $r_2 =|Y| >0$. This shows that $V = V(r)$, for $r^2 = r_1^2+r_2^2$ when $r_1, r_2 >0$, and this equality extends continuously to $\rn \backslash \{0\}$. We then get that $V$ is radially symmetrical, contradicting our assumption. Therefore $\hbox{Weyl}_g\otimes B<0$.
  \end{proof}
  
  \begin{corol} \label{corol:weyl:negatif}
  There are manifolds $(M,g)$ of dimension $n\geq 5$ and families of bubbles $B=(B_\ve)_{\ve >0}$ as in Definition \ref{def:bubble} such that $\hbox{Weyl}_g\otimes B<0$.
\end{corol}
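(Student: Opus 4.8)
The plan is to derive this corollary immediately from Proposition \ref{prop:weyl:negative} by exhibiting one concrete manifold and one concrete bubble satisfying its hypotheses. First I would fix $p=q=3$, so that $n=p+q=6\geq 5$, and take $M=\sph^3\times\sph^3$ with the product metric $g=g_3\otimes g_3$ of the round metrics; since $p,q\geq 2$ and $n\geq 5$, the geometric setup of Proposition \ref{prop:weyl:negative} applies. (Any other choice with $p,q\geq 2$ and $p+q\geq 5$ would work equally well; the dimension $6$ is merely the smallest such.) It then remains to produce an element $V\in\Sigma$ that is invariant under $O(p)\times O(q)$ but not under the full group $O(n)$, i.e.\ a genuinely non-radial $O(3)\times O(3)$-symmetric finite-energy solution of $\Delta_\xi V=|V|^{\crit-2}V$ on $\R^6$.

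The existence of such $V$ is exactly the content of Ding's construction \cite{ding}: for every $2\leq p\leq n-1$ there is a sign-changing solution of \eqref{yamabe:intro} in $D^{1,2}(\R^n)$ invariant under $O(p)\times O(n-p)$, obtained by reducing the PDE along the orbits to an ODE on a cylinder-type domain and finding a non-constant-in-angle critical point of the associated energy; the resulting solution is not radial, for otherwise the reduced one-dimensional problem would force it to coincide (up to dilation) with $B_0^+$, which is not $O(p)\times O(n-p)$-non-radial. Thus taking $V$ to be a Ding solution on $\R^6$ with $p=q=3$, all hypotheses of Proposition \ref{prop:weyl:negative} are met, so any family of bubbles $B=(B_\ve)_{\ve>0}$ modeled on $V$ as in Definition \ref{def:bubble} — say the explicit one built via \eqref{exp:bubble} from exponential charts at a fixed point $x_0\in M$ — satisfies $\hbox{Weyl}_g\otimes B<0$, which is the claim.

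The only genuine content beyond citing Proposition \ref{prop:weyl:negative} is verifying that $\Sigma$ contains a non-radial $O(p)\times O(q)$-invariant element, and the only subtlety there is ruling out that Ding's solution collapses to the radial bubble $B_0^+$. This is not an obstacle: $B_0^+$ is up to dilation the unique radial element of $\Sigma$ (Pohozaev), it is \emph{positive}, whereas Ding's solutions change sign by construction, so they cannot equal $B_0^+$ and in particular cannot be radial. Hence the step I would flag as requiring care — confirming non-radiality of the symmetric solution — is dispatched immediately, and the corollary follows. (Alternatively, one may invoke Del Pino--Musso--Pacard--Pistoia \cite{DelPinoMussoPacardPistoia1} or Medina--Musso \cite{MedinaMusso} for the same purpose, but Ding's family is the most economical since its symmetry group is precisely $O(p)\times O(q)$.)
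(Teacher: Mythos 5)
Your proof is correct and follows the same overall strategy as the paper: take $M = \sph^p\times\sph^q$ with $p,q\ge 2$, $p+q\ge 5$, choose $V$ to be a Ding solution with $O(p)\times O(q)$-invariance, and invoke Proposition~\ref{prop:weyl:negative}. The one place you argue differently from the paper is in ruling out that the Ding solution is radial. You argue: Ding solutions are sign-changing, while the unique radial element of $\Sigma$ up to dilation (and sign) is $B_0^+$, which is single-signed; hence Ding's $V$ is not radial. The paper instead appeals to the fact that Ding's construction yields solutions of arbitrarily large energy, while all radial elements of $\Sigma$ (rescalings of $B_0^+$) have the same fixed Dirichlet energy, so a high-energy Ding solution cannot be radial. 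Both arguments are valid. Your version is slightly more economical, but it leans on the precise statement of Ding's theorem explicitly asserting sign-change; the paper's energy argument is a touch more robust in that it needs only the energy estimate from Ding's construction, not its nodal character. Two small remarks: (i) when you say $B_0^+$ is the unique radial element of $\Sigma$ up to dilation, you should note this is also up to sign, so that one has to exclude $V = -B_0^+$ as well (trivially, since $-B_0^+$ is also single-signed); (ii) Ding actually works on $\sph^n$ with $O(p)\times O(q+1)$-invariance, and the paper spells out the stereographic projection to obtain the $O(p)\times O(q)$-invariant solution on $\R^n$ with the same decay and energy bounds; you implicitly assume this step. Neither affects correctness.
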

\begin{proof} We choose $p,q\geq 2$ such that $n:=p+q\geq 5$. As before we denote by $g_n$ the round metric in $\mathbb{S}^n$. It follows from the result of Ding \cite{ding} that for all $L>0$, there exists $\tilde V \in C^2(\mathbb{S}^n)$ that solves $\Delta_{g_n} \tilde V + \frac{n(n-2)}{4} \tilde V  = | \tilde V|^{2^*-2} \tilde V$,  satisfies $\int_{\mathbb{S}^n}| \tilde V|^{2^*}\, dv_{g_n}\geq L$ and is $O(p) \times O(q+1)$-invariant, that is $\tilde V(x,y)  = \tilde V(|x|,|y|)$ for $(x,y) \in \R^p\times\R^{q+1}=\R^{n+1}$. Let $\pi_N: \mathbb{S}^n \backslash \{N\} \to \R^n$ be the stereographic projection from the North pole. Let, for $x \in \R^n$,
$$ V(x) := \left(\frac{2}{1 + |x|^2}\right)^{\frac{n-2}{2}} \tilde V\big( \pi_N^{-1}(x) \big). $$
Then $V\in C^2(\rn)\cap D^{1,2}(\rn)$ solves $\Delta_\xi V=|V|^{\crit-2}V$ in $\R^n$, satisfies $\int_{\R^n} |V|^{2^*} \,dx \ge L$ and using the coordinate expression of the stereographic projection it is easily seen that it is $O(p)\times O(q)$-invariant, that is $V(x', x'') = V(|x'|, |x''|)$ for $(x',x'') \in \R^p \times \R^q = \R^n$. Up to taking $L$ large enough, we get that $V$ is not $O(n)$-invariant since by Obata \cite{Obata} finite-energy radial solutions of \eqref{yamabe:intro} are translations and rescalings of $B_0^+$ given by \eqref{B0} and all have fixed minimal Dirichlet energy. Let now $M:=\sph^p\times \sph^q$ and $g:=g_p\otimes g_q$ be the product metric of the round metrics on $\sph^p$ and $\sph^q$. Let $B=(B_\ve)_{\ve >0}$ be any family of bubbles in $(M,g)$ modeled on $V$ as in Definition \ref{def:bubble}, centered at $(\xe )_\ve\in M$ and with radius $(\me)_\ve\in \rr_{>0}$ and $x_0 = \lim_{\ve \to 0} x_\ve$, where $x_0 \in M$ is any point. For $L$ large enough Proposition \ref{prop:weyl:negative} applies and shows that $\hbox{Weyl}_g\otimes B< 0$. 
\end{proof}

  \subsection{The mass of the Green's function in 3D}

We conclude this section by recalling the definition of the mass when $n = 3$, following Li-Zhu \cite{LiZhu}:
\begin{defi}\label{def:mass} Let $(M,g)$ be a smooth, closed, connected $3-$dimensional Riemannian manifold. Let $h\in C^1(M)$ be such that $\Delta_g+h$ is coercive. Let $G_h$ be the Green's function for $\Delta_g+h$. Then for all $x_0\in M$, there exists $m_h(x_0)\in\mathbb{R}$, denoted as the \emph{mass}, such that
$$G_h(x,x_0)=\frac{1}{4\pi d_g(x,x_0)}+m_h(x_0)+o(1)\hbox{ as }x\to x_0.$$
\end{defi}
When $n=3$, $\Delta_g +h$ is coercive and $x_0 \in M$, It follows from the construction of the Green's function (see Aubin \cite{Aubin}, Robert \cite{RobDirichlet}) that there exists $\beta_{x_0}\in C^0(M)\cap H_{2 }^p(M)$ for all $p<3$ such that
\begin{equation} \label{DL:foncgreen}
G_h(x,x_0)=\frac{\chi(x)}{4\pi d_g(x,x_0)}+\beta_{x_0}(x)\hbox{ for all }x\in M \backslash \{x_0\},
\end{equation}
where $\chi \in C^\infty(M)$ is a cutoff function with $\chi(x)=1$ in $B_g(x_0, \frac{r_g}{2})$ and $\chi(x)=0$ in $M \backslash B_g(x_0, r_g)$. We have $(\Delta_g+h)\beta_{x_0}=-(\Delta_g+h)( (4\pi)^{-1}\chi d_g(\cdot,x_0)^{-1})=O(d_g(\cdot,x_0)^{-1})$ and standard elliptic theory then shows that 
\begin{equation} \label{est:C1:masse}
|\beta_{x_0}(x)|\leq C\, , \, |\nabla \beta_{x_0}(x)|\leq C\big (1+|\ln d_g(x_0,x)| \big) 
\hbox{ for all }x\in B_g\left(x_0, \frac{r_g}{2}\right)\backslash\{0\}.
\end{equation}
It now follows from Definition \ref{def:mass} that
$$m_{h}(x_0):=\beta_{x_0}(x_0).$$

\section{A priori pointwise estimates for families of bubbles} \label{Sec3}

\subsection{Statement of the results}

In this section we will assume, as in the introduction, that $(\ue)_{0<\ve\le1}$ is a family of possibly sign-changing solutions of \eqref{Intro:eq1} satisfying \eqref{one:bubble}: 
 \be u_\ve = c B_\ve + o(1) \quad \text{ in } H^1(M), 
\ee 
for some $c >0$, where $B_\ve$ is a family of bubbles as in Definition \ref{def:bubble}, centered at $x_\ve$ with radius $\mu_\ve$. Here $(x_\ve)_{0<\ve \le 1}$ and $(\mu_\ve)_{0 < \ve \le 1}$ are families of points in $M$ and of positive numbers with $\mu_\ve \to 0$ as $\ve \to 0$. We let $V\in \Sigma$, where $\Sigma$ is as in \eqref{defSigma}, be the function that appears in \eqref{exp:bubble}.  Under this assumption $(u_\ve)_{0 < \ve \le 1}$ blows-up with only one possibly sign-changing bubble and a zero weak limit.  Throughout this paper we will adopt the following conventions: for nonnegative families $(a_\ve)_{0<\ve\le 1}$ and $(b_\ve)_{0<\ve\le 1}$ of functions in $M$ (or real numbers), we will write $a_\ve \lesssim b_\ve $, or equivalently $a_\ve = O(b_\ve)$, when there exists a positive constant $C$ such that 
$$ a_\ve(x) \le C b_\ve(x) \quad \text{ for all } 0 < \ve \le 1 \text{ and any } x \in M.$$ 
Since $\Delta_g + h_\ve$ is coercive we let, for any $0 < \ve \le 1$, $G_{h_\ve}$ be the positive Green's function of $\Delta_g +h_\ve$ (see  Robert \cite{RobDirichlet}). Since $h_\ve \to h$ in $C^1(M)$ as $\ve \to 0$ and $\Delta_g +h$ is coercive there exists $C>0$ such that 
\ben \label{greenfonc}
\Big| d_g(x,y)^{n-2}G_{h_\ve}(x,y)- \frac{1}{(n-2) \omega_{n-1} }  \Big| \le C d_g(x,y)
\een holds true for any $(x,y) \in M\times M \backslash D(M)$ and for any $0 < \ve \le 1$, where $D(M) = \{ (x,x), x \in M \}$ and where $\omega_{n-1}$ is the area of the standard sphere $\mathbb{S}^{n-1}$. Define, for $ x\in M$, 
\[ F_\ve(x_\ve, x) =  (n-2)\omega_{n-1} d_g(x_\ve, x)^{n-2} G_{h_\ve}(x_\ve, x). \]
We define, for any $x \in M$ and any $0 < \ve \le 1$:
\ben \label{Ve2}
\bal
 B^{\mu_\ve,x_\ve}(x) & =  \chi \left(  d_{g}(x_\ve, x) \right)  F_\ve(x_\ve, x) \mu_\ve^{-\pui}V \left(  \frac{1}{\mu_\ve} \exp_{x_\ve}^{-1}(x) \right)    \\
 & + \left( 1 -  \chi \left(  d_{g}(x_\ve, x) \right)   \right)   (n-2)\omega_{n-1} \lambda(V) \mu_\ve^{\pui} G_{h_\ve}(x_\ve, x), \\
\eal \een
where $\lambda(V)$ is as in \eqref{deflambda} and for $\chi \in C^\infty_c(\R)$ with $\chi \equiv 1 $ in $[0,\frac{i_g(M)}{4}]$ and $\chi \equiv 0$ in $[\frac{i_g(M)}{2}, + \infty)$, and where $i_g(M)$ is the injectivity radius of $(M,g)$. By \eqref{greenfonc} and properties of the Green's function (see \cite{RobDirichlet}) we have 
$$ F_\ve(x_\ve, x) = 1 + O\big( d_g(x_\ve, x) \big) \quad \text{ and } \quad |\nabla_x F_\ve(x_\ve, x )| = O(1)  \quad \text{ for } x \in M.$$
It is therefore easily seen with \eqref{exp:bubble} that 
$$ B^{\mu_\ve,x_\ve} = B_\ve + o(1) \quad \text{ in } H^1(M), $$
so that $(B^{\mu_\ve, x_\ve})_{0 < \ve \le 1}$ is a family of bubbles which is equivalent to $(B_\ve)_{0< \ve \le 1}$ in the sense of Proposition \ref{prop:unique:bubble}. The original assumption \eqref{one:bubble} implies that $u_\ve$ still satisfies
\ben \label{Sec2:2}
 u_\ve = c B^{\mu_\ve,x_\ve} + o(1) \quad \text{ in } H^1(M)
\een
for some $c>0$. We will thus, in the subsequent analysis, work with the family $(B^{\mu_\ve, x_\ve})_{0 < \ve \le 1}$. If $(y_\ve)_{\ve}$ is a family of points in $M$ satisfying $\mu_\ve << d_g(x_\ve, y_\ve) \le \frac{i_g(M)}{4}$ we also have, by \eqref{eq:expansion}, \eqref{greenfonc} and \eqref{Ve2},
\begin{equation}\label{Ve3} \bal 
B^{\mu_\ve, x_\ve}(y_\ve) &= F_\ve(x_\ve, y_\ve) \mu_\ve^{-\pui}V \left(  \frac{1}{\mu_\ve} \exp_{x_\ve}^{-1}(y_\ve) \right)   \\
& = (n-2)\omega_{n-1} \mu_\ve^{\pui}G_{h_\ve}(x_\ve, y_\ve) \left(  \lambda(V) + O \left(  \frac{\mu_\ve}{d_g(x_\ve, y_\ve)}\right)   \right)  . 
\eal 
\end{equation}
The function $ B^{\mu_\ve,x_\ve}$ thus interpolates the (rescaled) pull-back of $V$ in $M$ to its first-order expansion at infinity. 

\medskip 

In this section we prove that \eqref{Sec2:2} still holds true globally pointwise in $M$, up to an error term that is of the order of the positive standard bubble $B_\ve^+$ given by \eqref{std:bubble}. The main result of this section is the following:

\begin{prop} \label{prop:C0}
Let $(M^n,g), n \ge 3$ be a smooth, connected and closed manifold, let $(h_\ve)_{0 < \ve \le 1}$ be a family of $C^1$ functions in $M$ converging in $C^1(M)$ towards $h$ and let $(p_\ve)_{0 < \ve \le 1}$ be a family of real numbers satisfying $2 < p_\ve \le \crit $ for all $0 < \ve \le 1$ and such that $\lim_{\ve \to 0} p_\ve = \crit $. Assume that $\Delta_g + h$ is coercive. Let $(u_\ve)_{0 < \ve \le 1}$ be a family of solutions of \eqref{Intro:eq1} satisfying \eqref{one:bubble} for some $c >0$. 

Then $c=1$ in \eqref{one:bubble} and there exists a family $(\sigma_\ve)_{0 < \ve \le 1}$ of positive real numbers with $\lim_{\ve \to 0} \sigma_\ve = 0$ such that, 
\ben \label{estPropC0}
 \left| \left| \frac{u_\ve - B^{\mu_\ve,x_\ve} }{B_\ve^+} \right| \right|_{L^\infty(M)} \le \sigma_\ve  
 \een
 for any $0< \ve \le 1$, where $B^{\mu_\ve,x_\ve}$ is as in \eqref{Ve2} and $B_\ve^+$ is as in \eqref{std:bubble}.
 \end{prop}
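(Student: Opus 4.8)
The plan is to upgrade the $H^1$-closeness \eqref{Sec2:2} to the pointwise bound \eqref{estPropC0} by a contradiction argument driven by a suitable rescaling of the error. First I would normalize: set $\varphi_\ve := u_\ve - B^{\mu_\ve,x_\ve}$, which by \eqref{Sec2:2} satisfies $\varphi_\ve \to 0$ in $H^1(M)$, and introduce the weighted norm $\Vert \psi \Vert_* := \Vert \psi / B_\ve^+ \Vert_{L^\infty(M)}$. The statement is that $\Vert \varphi_\ve \Vert_* \to 0$ and, as a byproduct, that $c=1$. I would argue by contradiction, assuming along a subsequence that $\Vert \varphi_\ve \Vert_* \geq \delta_0 > 0$ (or $\Vert \varphi_\ve \Vert_* = +\infty$, which is handled first by a truncation/bootstrap showing $\varphi_\ve$ is pointwise finite and controlled). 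The key is to write the equation satisfied by $\varphi_\ve$: since $\Delta_g u_\ve + h_\ve u_\ve = |u_\ve|^{p_\ve - 2} u_\ve$ and, by construction of $B^{\mu_\ve,x_\ve}$ via the Green's function (which accounts exactly for the geometry and for $h_\ve$), one has $\Delta_g B^{\mu_\ve,x_\ve} + h_\ve B^{\mu_\ve,x_\ve} = (\text{leading nonlinear term}) + E_\ve$ for an explicit error $E_\ve$ coming from the curvature corrections to $\exp_{x_\ve}$, the cutoff, the difference $p_\ve$ vs $\crit$, and the $F_\ve - 1$ discrepancy. Subtracting yields
\begin{equation*}
\Delta_g \varphi_\ve + h_\ve \varphi_\ve = |u_\ve|^{p_\ve-2}u_\ve - |B^{\mu_\ve,x_\ve}|^{p_\ve-2}B^{\mu_\ve,x_\ve} + R_\ve,
\end{equation*}
where $R_\ve$ collects $E_\ve$ and lower-order terms. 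The nonlinear difference is then linearized: $|u_\ve|^{p_\ve-2}u_\ve - |B^{\mu_\ve,x_\ve}|^{p_\ve-2}B^{\mu_\ve,x_\ve} = (p_\ve-1)|B^{\mu_\ve,x_\ve}|^{p_\ve-2}\varphi_\ve + Q_\ve(\varphi_\ve)$ with $Q_\ve$ superlinear in $\varphi_\ve$.

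Next I would use the Green's representation formula for $\Delta_g + h_\ve$:
\begin{equation*}
\varphi_\ve(x) = \int_M G_{h_\ve}(x,y)\Big[ (p_\ve-1)|B^{\mu_\ve,x_\ve}|^{p_\ve-2}\varphi_\ve + Q_\ve(\varphi_\ve) + R_\ve \Big](y)\, dv_g(y).
\end{equation*}
Divide by $B_\ve^+(x)$, take the supremum, and estimate each piece using the standard Green's-function-versus-bubble pointwise integral estimates (the interpolation inequalities of the type $\int_M G_{h_\ve}(x,y) B_\ve^+(y)^{q}\, dv_g \lesssim B_\ve^+(x)^{\min(q-1,\ldots)}\cdot(\ldots)$ depending on the dimension, the familiar lemmas from Druet–Hebey–Robert and Premoselli–Vétois). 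The linear term contributes $(p_\ve-1)\int_M G_{h_\ve}(x,y)|B^{\mu_\ve,x_\ve}|^{p_\ve-2}(y)\varphi_\ve(y)\,dv_g$, which is bounded by $\Vert\varphi_\ve\Vert_*$ times an integral that is \emph{not} small near the concentration point — this is the crux. The remainder $R_\ve$ and the superlinear term $Q_\ve(\varphi_\ve)$ should be $o(1)$ and $o(\Vert\varphi_\ve\Vert_*) + o(\Vert\varphi_\ve\Vert_*^2)$ respectively in the $*$-norm, using $\Vert\varphi_\ve\Vert_{H^1}\to 0$ together with elliptic regularity / De Giorgi–Nash–Moser to promote $H^1$-smallness to $L^\infty$-smallness on compact sets away from $x_\ve$, and rescaled estimates near $x_\ve$.

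To handle the troublesome linear term, I would extract a blow-up limit: let $x_\ve$ realize (nearly) the supremum in $\Vert \varphi_\ve\Vert_*$, or more precisely work with the rescaled functions $\tilde\varphi_\ve(y) := \mu_\ve^{\frac{n-2}{2}}\varphi_\ve(\exp_{x_\ve}(\mu_\ve y)) / \Vert\varphi_\ve\Vert_*$, which are bounded in $L^\infty_{loc}(\R^n)$ by construction and satisfy, after rescaling, a linearized equation converging to
\begin{equation*}
\Delta_\xi \tilde\varphi = (\crit-1)|V|^{\crit-2}\tilde\varphi \quad \text{in } \R^n.
\end{equation*}
By elliptic estimates $\tilde\varphi_\ve \to \tilde\varphi$ in $C^1_{loc}$, where $\tilde\varphi$ solves this limiting equation with $|\tilde\varphi(y)| \lesssim (1+|y|)^{2-n}$, hence $\tilde\varphi \in D^{1,2}(\R^n)$ and lies in the kernel of the linearized operator at $V$. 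The critical point where the sign-changing case genuinely differs from the positive case is that this kernel is \emph{not} assumed to be spanned only by the translations and dilation (no non-degeneracy à la Duyckaerts–Kenig–Merle is assumed), so I cannot simply orthogonalize against a finite-dimensional space. Instead I would use the fact that $\varphi_\ve$ already satisfies orthogonality conditions inherited from the minimality/best-choice of the parameters $(x_\ve,\mu_\ve)$ — these kill the translation and dilation directions — combined with a Pohozaev-type or a direct argument showing that the limiting $\tilde\varphi$ must vanish: indeed, from $u_\ve = cB_\ve + o(1)$ and the definition of $B^{\mu_\ve,x_\ve}$ one reads off that the ``$\lambda(V)$ mode'' forces $c=1$, and the remaining freedom is exactly the null space of the linearization restricted to the complement of the already-quotiented directions; here the weighted-norm normalization forces $\Vert\tilde\varphi\Vert_{L^\infty} = 1$ somewhere (or the sup is attained in a compact region), contradicting $\tilde\varphi \equiv 0$. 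The alternative — that the supremum in the $*$-norm escapes to the ``neck'' region $\mu_\ve \ll d_g(x_\ve,\cdot) \ll 1$ or to the region far from $x_\ve$ — is ruled out separately using \eqref{Ve3} and the Green's function decay, since there $B^{\mu_\ve,x_\ve}$ already matches $u_\ve$'s profile to higher order.

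\textbf{Main obstacle.} The hardest step is the last one: closing the contradiction \emph{without} a non-degeneracy assumption on $V$. For positive bubbles the kernel of the linearized operator is explicitly known (Bianchi–Egnell / Rey), so one orthogonalizes $\varphi_\ve$ against it and the limiting $\tilde\varphi$ is forced to be zero by the orthogonality. Here $V$ is an arbitrary sign-changing finite-energy solution, its linearized kernel is uncharacterized, and so one must instead rely on the \emph{a priori} orthogonality conditions that the bubble parameters $(c,\mu_\ve,x_\ve)$ are chosen to satisfy (projecting $u_\ve - cB^{\mu_\ve,x_\ve}$ off the canonical modes $V$, $\partial_\lambda$, $\partial_{x_i}$), show these are preserved in the limit, and then argue that a nonzero $\tilde\varphi$ in the kernel orthogonal to these modes would still produce, via a Pohozaev identity against the equation for $u_\ve$, an inconsistency with the leading-order balance that determines $\Lambda$ and $c=1$. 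Equivalently, one shows the sup in the $*$-norm cannot be attained in the bubble region because any such limiting profile decays like $|y|^{2-n}$, hence is governed by the same Green's representation, which is a strict contraction in the weighted norm away from the finitely many ``resonant'' directions that have been projected out. Getting this contraction estimate sharp — in particular tracking that the constant in front of the linear term is strictly less than $1$ after the projection — is where all the dimension-dependent casework ($n=3$, $n=4$, $n\geq 5$, and the behavior of $\int_M G_{h_\ve}|B^+_\ve|^{p_\ve-2}B^+_\ve\,dv_g$) enters, and is the technical heart of Section \ref{Sec3}.
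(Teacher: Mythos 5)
Your proposal identifies the correct difficulty — the absence of any non-degeneracy hypothesis on $V$ — but the route you propose does not close, and the paper in fact takes a fundamentally different path that sidesteps linearization entirely.

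Your scheme is the standard ``linearize around the bubble, represent the error $\varphi_\ve = u_\ve - B^{\mu_\ve,x_\ve}$ via the Green's function, and close a contraction in a weighted $L^\infty$ norm modulo the kernel of the linearized operator.'' That contraction cannot be established here because: (i) the kernel of $\Delta_\xi - (\crit-1)|V|^{\crit-2}$ on $D^{1,2}(\rn)$ is uncharacterized for a general sign-changing $V$, and the paper explicitly does not assume non-degeneracy in the sense of Duyckaerts–Kenig–Merle; (ii) the parameters $(x_\ve,\mu_\ve)$ are part of the data (they come from the assumed Struwe-type decomposition \eqref{one:bubble}), not degrees of freedom you can tune to enforce orthogonality against translation and dilation modes — there is no ``best choice'' minimization in the hypotheses; and (iii) even if you could quotient out $V$, $\partial_\lambda$, $\partial_{x_i}$, you would still have no control on the rest of the kernel, so the constant in front of the linear term of the contraction is not strictly less than $1$. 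Your suggestion of a Pohozaev identity to rule out a nonzero $\tilde\varphi$ is also the wrong tool at this stage: Pohozaev is used in the paper to control $\crit - p_\ve$, not to kill kernel elements.

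The paper avoids all of this by never linearizing. Its proof is a direct, nonlinear argument in the spirit of Druet–Hebey–Robert and Ghoussoub–Mazumdar–Robert. First, scaling and the one-bubble hypothesis \eqref{one:bubble} give $C^2_{loc}$ convergence of the rescaled $u_\ve$ to $V$ (Step~1) and rule out concentration at intermediate scales (Step~2). Then a maximum-principle comparison against explicit supersolutions built from powers of the Green's function gives a weak weighted bound (Step~3), which is bootstrapped via the Green's representation of the \emph{nonlinear} right-hand side $|u_\ve|^{p_\ve-2}u_\ve$ to the global bound $|u_\ve|\lesssim B_\ve^+$ (Step~4). The sharp estimate \eqref{estPropC0} is finally obtained (Step~6) by applying the representation formula directly to $u_\ve$ (not to $\varphi_\ve$), splitting the integral into $B_g(x_\ve,R\mu_\ve)$ and its complement: the inner contribution equals $B^{\mu_\ve,x_\ve}(y_k)\big(1+o(1)\big)$ by the definition \eqref{Ve2} and \eqref{eq:decayV}, while the outer contribution is $O(R^{-2}B_\ve^+(y_k))$ by the bound from Step~4; sending $R\to\infty$ gives the contradiction. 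Finally $c=1$ follows from the Pohozaev identity (Step~5) giving $\crit-p_\ve = O(\mu_\ve^2)$ combined with the identity $c=\lim_{\ve\to 0}\mu_\ve^{-\frac{(n-2)^2}{8}(\crit-p_\ve)}$. Nowhere does the kernel of the linearized operator appear, which is precisely why the result holds for arbitrary $V\in\Sigma$.
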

 
The function $V \in \Sigma$ that appears in the definition of $B^{\mu_\ve, x_\ve}$ is the one that defines the family $(B_{\ve})_{0 < \ve \le 1}$ and is given by \eqref{exp:bubble}. In particular, for any $x \in M$ and any $0 < \ve \le 1$ Proposition \ref{prop:C0} shows that
  $$ \big| u_\ve(x) - B^{\mu_\ve,x_\ve}(x) \big| \le \sigma_\ve B_\ve^+(x). $$
 As a consequence, if $(y_\ve)_{0<\ve\le 1}$ is any family of points in $M$ we have 
 $$ u_\ve(y_\ve) = B^{\mu_\ve,x_\ve}(y_\ve) + o\left(  B_\ve^+(y_\ve) \right)   $$
 as $\ve \to 0$. 
 
 \medskip
 
 Proposition \ref{prop:C0} is sharp if $\lambda(V) \neq 0$ which, by \eqref{eq:expansion}, corresponds to a bubble that decays to infinity in $\R^n$ as $|x|^{2-n}$. This is the case for the positive bubble $B_0^+$ given by \eqref{B0} and for the sign-changing solutions of \eqref{yamabe:intro} constructed in DelPino-Musso-Pacard-Pistoia \cite{DelPinoMussoPacardPistoia1}. Proposition \ref{prop:C0} for positive solutions has been known since the work of Li-Zhu \cite{LiZhu} and Druet \cite{DruetJDG} and Druet-Hebey-Robert \cite{DruetHebeyRobert}] (see also \cite{HebeyZLAM}).  In this section we prove Proposition \ref{prop:C0} by following the approach in Ghoussoub-Mazumdar-Robert \cite{GhoussoubMazumdarRobert}, that draws inspiration from the techniques in \cites{DruetHebeyRobert, HebeyZLAM}. Since we consider sign-changing solutions and bubbles we have to adapt the existing techniques that have been developed for positive solutions. In the exactly critical case $\pe \equiv \crit $, Proposition \ref{prop:C0} has already been proven in Premoselli \cite{Premoselli13} in the more general case where multiple sign-changing bubbles may appear.

 \subsection{Proof of Proposition \ref{prop:C0}}
 
As a preliminary observation we claim that  
\begin{equation}\label{defc}
 c = \lim_{\ve \to 0} \mu_\ve^{-\frac{(n-2)^2}{8}(\crit -\pe)} \in [1, + \infty).
 \end{equation}
We set $\tilde{U}_\ve(x):=\me^{\frac{n-2}{2}}\ue(\hbox{exp}_{\xe}(\me x))$ for $x\in B(0, \frac{i_g(M)}{2 \mu_\ve})$. It follows from \eqref{one:bubble} and \eqref{exp:bubble} that $\tilde{U}_\ve\to cV$ in $H^1_{loc}(\rn)$. With a change of variable, equation \eqref{Intro:eq1} writes
$$\Delta_{g_\ve} \tilde{U}_\ve+\me^2\he(\hbox{exp}_{\xe}(\me x))\tilde{U}_\ve=\me^{\frac{n-2}{2}(\crit-p_\ve)}|\tilde{U}_\ve|^{\crit-2}\tilde{U}_\ve\hbox{ in }B \Big(0, \frac{i_g(M)}{2 \mu_\ve}\Big), $$
where we have let $g_\ve(x) = \exp_{x_\ve}^*g (\mu_\ve x)$. 
Passing to the limit $\ve\to 0$ yields 
$$\Delta_{\xi}(cV)=\big( \lim_{\ve\to 0}\me^{\frac{n-2}{2}(\crit-p_\ve)} \big)c^{\crit-1}|V|^{\crit-2}V\hbox{ weakly in }\rn.$$
Since $\mu_\ve \to 0$ as $\ve \to 0$ and $p_\ve \le 2^*$ we have $\mu_\ve^{2^*-p_\ve} \le 1$  for $\ve$ small enough. Since $V \in D^{1,2}(\R^n) \backslash \{0\}$ it cannot be harmonic, which shows that $\lim_{\ve\to 0}\me^{\frac{n-2}{2}(\crit-p_\ve)} \in (0,1]$. Finally, $V$ solves \eqref{yamabe:intro}, which proves the claim.

\medskip\noindent As already remarked, we can assume that \eqref{Sec2:2} holds. This implies that the family $(u_\ve)_{0 < \ve \le 1}$ blows-up, that is 
\begin{equation*}
\lim_{\ve \to 0} \Vert u_\ve \Vert_{L^\infty(M)} = + \infty 
\end{equation*}
By  \eqref{defc} it is also easily seen that 
\ben \label{Sec2:3}
 \mu_\ve^{\pui - \frac{2}{p_\ve-2}} \to c \quad \text{ as } \ve \to 0.
 \een
Decomposition \eqref{Sec2:2} with \eqref{Ve2} shows in particular that $u_\ve \to 0$ in $H^1(M \backslash B_g(x_\ve, \delta))$  as $\ve \to 0$, for any $\delta >0$ fixed. The regularity theory of Trudinger \cite{Trudinger}, together with standard elliptic theory, then shows that
\ben \label{Sec2:31}
u_\ve \to 0 \quad \text{ in } \quad C^2_{loc}(M \backslash \{x_0\})
\een
as $\ve \to 0$, where we have let $x_0 = \lim_{\ve \to 0} x_\ve$. We prove Proposition \ref{prop:C0} in several steps. 

\medskip

\textbf{Step $1$:} Let, for any $x\in B(0, \frac{i_g(M)}{2 \mu_\ve})$,
$$ U_\ve(x) = \mu_\ve^{\frac{2}{p_\ve-2}} u_\ve \left(  \exp_{x_\ve}(\mu_\ve x) \right)  . $$
We claim that 
\ben \label{Sec2:4}
U_\ve \to V \quad \text{ in } C^{2}_{loc}(\R^n) 
\een
as $\ve \to 0$.
\begin{proof}[Proof of \eqref{Sec2:4}]
Let, for $x \in B(0, \frac{i_g(M)}{2 \mu_\ve})$, $g_\ve(x) = \exp_{x_\ve}^*g (\mu_\ve x)$. By \eqref{Intro:eq1} $U_\ve$ satisfies
\ben \label{Sec2:5}
 \Delta_{g_\ve} U_\ve + \mu_\ve^2 \tilde{h}_\ve U_\ve = |U_\ve|^{p_\ve-2} U_\ve \quad \text{ in } B\left( 0, \frac{i_g(M)}{2 \mu_\ve} \right)  , 
 \een
where we have let $ \tilde{h}_\ve(x) = h_\ve \left(  \exp_{x_\ve}(\mu_\ve x) \right)  $. Equation \eqref{Sec2:2} together with \eqref{Sec2:3} shows that, for any compact set $K \subset \subset \R^n$, we have 
$$ \bal \int_K |U_\ve - V|^{p_\ve} dv_{g_\ve} & = \mu_\ve^{\frac{2p_\ve}{p_\ve-2}-n} \int_{\exp_{x_\ve}(\mu_\ve K)} \Big| u_\ve - \mu_\ve^{\frac{n-2}{2} - \frac{2}{p_\ve-2}}B^{\mu_\ve,x_\ve} \Big|^{p_\ve} dv_g \\
& =  \mu_\ve^{\frac{2p_\ve}{p_\ve-2}-n} \int_{\exp_{x_\ve}(\mu_\ve K)} \Big| u_\ve - (c + o(1)) B^{\mu_\ve,x_\ve} \Big|^{p_\ve} dv_g \\
& \lesssim  \int_{\exp_{x_\ve}(\mu_\ve K)} \Big| u_\ve - (c+o(1)) B^{\mu_\ve,x_\ve} \Big|^{p_\ve} dv_g  \\
& = o(1)
\eal $$
 as $\ve \to 0$, where the second line follows from \eqref{Sec2:3} and in the third line we used that $\mu_\ve^{\frac{2p_\ve}{p_\ve-2}-n}  \lesssim 1$ since $\frac{2p_\ve}{p_\ve-2}-n \ge0$. The local regularity theory of Trudinger \cite{Trudinger} and standard elliptic theory using \eqref{Sec2:5} then show that the convergence of $U_\ve$ towards $V$ takes place in $C^2_{loc}(\R^n)$, which concludes the proof of \eqref{Sec2:4}. 
 \end{proof}
By \eqref{Sec2:2}, \eqref{Sec2:4} and Sobolev's inequality, we get 
 \ben \label{Sec2:32}
\lim_{R \to +\infty} \limsup_{\ve \to 0} \Vert u_\ve \Vert_{L^{\crit }(M \backslash B_g(x_\ve, R\mu_\ve))} = 0 . 
\een

\medskip

\textbf{Step $2$:} 
Let $0 < \delta < i_g(M)$ be fixed. We claim that
\ben \label{Sec2:6}
\lim_{R \to + \infty} \limsup_{\ve \to 0} \max_{B_g(x_\ve, \delta)\backslash B_g(x_\ve, R\mu_\ve)} d_g(x_\ve, x)^{\frac{2}{p_\ve-2}} |u_\ve(x)| = 0 .
\een
\begin{proof}
To prove \eqref{Sec2:6} we will prove  more generally that 
\ben \label{Sec2:7}
\limsup_{\ve \to 0} \max_{ x \in M} \left(  \mu_\ve + d_g(x_\ve, x)\right)  ^{\frac{2}{p_\ve-2}}\big| u_\ve - c B^{\mu_\ve,x_\ve}\big|(x) = 0. 
\een
It is easily seen using \eqref{Ve2} that \eqref{Sec2:6} follows from \eqref{Sec2:7}. To prove \eqref{Sec2:7} we proceed by contradiction and assume that, up to passing to a subsequence $(\ve_k)_{k \ge0}$ with $\lim_{k \to + \infty} \ve_k = 0$, there exists $\eta_0 >0$ and a sequence $(y_k)_{k \ge0}$ of points of $M$ such that
\ben \label{Sec2:8}
 \bal &\left(  \mu_{\ve_k} + d_g(x_{\ve_k}, y_{k})\right)  ^{\frac{2}{p_{\ve_k}-2}}\big| u_{\ve_k} - c B^{\mu_{\ve_k}, x_{\ve_k}}\big|(y_k) \\
&=  \max_{ x \in M} \left(  \mu_{\ve_k} + d_g(x_{\ve_k}, x)\right)  ^{\frac{2}{p_{\ve_k}-2}}\big| u_{\ve_k} - c B^{\mu_{\ve_k}, x_{\ve_k}}\big|(x)  \ge  \eta_0. \eal 
\een
By \eqref{Sec2:3} and \eqref{Sec2:4} we get that 
\ben \label{Sec2:81}
\frac{d_g(x_{\ve_k}, y_k)}{\mu_{\ve_k}} \to + \infty
\een 
as $k \to + \infty$. By \eqref{Ve2} and since $\frac{2}{p_{\ve_k}-2} - \pui \ge 0$ we thus have
\ben \label{Sec2:82}
\bal
& \left(  \mu_{\ve_k} + d_g(x_{\ve_k}, y_k)\right)  ^{\frac{2}{p_{\ve_k}-2}}\big| B^{\mu_{\ve_k}, x_{\ve_k}}\big|(y_k)  \\
&\quad  \lesssim \left( \frac{\mu_{\ve_k}}{\mu_\ve + d_g(x_{\ve_k}, y_k)}\right)^{\pui} \left( \mu_\ve + d_g(x_{\ve_k}, y_k)\right)  ^{\frac{2}{p_{\ve_k}-2} - \pui} \\
&\quad = o(1) 
\eal
\een
as $k \to + \infty$.  Using \eqref{Ve2} and \eqref{Sec2:31}, \eqref{Sec2:8} also shows that $d_g(x_{\ve_k}, y_k) \to 0$, and hence, with \eqref{Sec2:82}, that $|u_{\ve_k}(y_k)| \to + \infty$ as $k \to + \infty$. We let $\nu_k = |u_{\ve_k}(y_k)|^{- \frac{p_{\ve_k}-2}{2}}$, so that $\nu_k \to 0$, and for $x \in B(0, \frac{i_g(M)}{2 \nu_k})$ we let 
$$ w_k(x) = \nu_k^{\frac{2}{p_{\ve_k}-2}} u_{\ve_k} \left(  \exp_{y_k}(\nu_k x) \right)  . $$
By \eqref{Intro:eq1} $w_k$ satisfies
\ben \label{Sec2:83}
 \Delta_{g_k} w_k + \nu_k^2 \tilde{h}_k w_k = |w_k|^{p_{\ve_k}-2} w_k \quad \text{ in } B\left( 0, \frac{i_g(M)}{2 \nu_k} \right)  , 
 \een
where we have let $ \tilde{h}_k(x) = h_{\ve_k} \left(  \exp_{y_k}(\nu_k x) \right)  $ and $g_k = \exp_{y_k}^*g(\nu_k x)$. Independently, by  \eqref{Sec2:8}, \eqref{Sec2:81} and \eqref{Sec2:82} we see that 
\ben \label{Sec2:84} 
d_g(x_{\ve_k}, y_k) \gtrsim \nu_k.
\een With the latter, \eqref{Sec2:8} shows that there exists $r_0 >0$ small enough such that 
$$ \Vert w_k \Vert_{L^\infty(B(0,2r_0))} \le 2 $$
for any $k$ large enough. With \eqref{Sec2:83} and standard elliptic theory, and since $|w_k(0)|=1$, $w_k$ converges in  $C^2(\overline{B(0,r_0)})$ as $k \to + \infty$, up to a subsequence, to a non-zero solution $w_0$ of 
$$ \Delta_{\xi} w_0 = |w_0|^{\crit -2} w_0 \quad \text{ in } B(0,r_0) ,$$ 
that thus satisfies
\ben \label{Sec2:85}
\int_{B(0,r_0)} |w_0|^{\crit } dx >0. 
\een 
Independently, \eqref{Sec2:81} and \eqref{Sec2:84} show that, up to assuming that $r_0$ is small enough, we have
\ben \label{Sec2:86}
\frac{1}{\mu_{\ve_k}} d_g \left(  x_{\ve_k}, B_g(y_k, r_0 \nu_k) \right)   \to + \infty 
\een
as $k \to + \infty$. We let $q_k = \frac{n}{2} p_{\ve_k} -n$. As is easily seen, it satisfies $q_k \le \crit $ (since $p_{\ve_k} \le \crit $), $q_k \to \crit $ as $k \to + \infty$ and
$$ \frac{2q_k}{p_{\ve_k}-2} = n. $$
 We then have, by the $C^2$ convergence of $w_k$ towards $w_0$ and by H\"older's inequality,
$$\bal \int_{B(0,r_0)} |w_0|^{\crit } dx & =  \limsup_{k \to +\infty}  \int_{B(0, r_0)} |w_k|^{q_k} dv_{g_k} \\
 & = \limsup_{k \to +\infty}\int_{B_g(y_k, r_0 \nu_k)} |u_{\ve_k}|^{q_k} dv_{g}  \\
& \lesssim \limsup_{k \to +\infty}  \left(\int_{B_g(y_k, r_0 \nu_k)} |u_{\ve_k}|^{\crit } dv_g\right)^{\frac{q_k}{\crit }}\\
& = 0,
\eal$$
where the last equality follows from \eqref{Sec2:32} and \eqref{Sec2:86}. This is a contradiction with \eqref{Sec2:85}. Hence \eqref{Sec2:8} cannot occur, which proves \eqref{Sec2:7}.
\end{proof}

\medskip

For $0 <\delta < i_g(M)$ we let 
\ben \label{Sec2:9}
\eta_\ve(\delta) = \Vert u_\ve \Vert_{L^\infty(M \backslash B_g(x_\ve, \delta))}. 
\een 
By \eqref{Sec2:31}, $\eta_\ve(\delta) \to 0$ as $\ve \to 0$ for any $0 <\delta < i_g(M)$  fixed. 

\medskip

\textbf{Step $3$:} We claim that for any $0 < \alpha < \frac12$ there exists $0 < \delta_\alpha < i_g(M)$ and $C_{\alpha} >0$ such that, for any $x \in M \backslash B_g(x_\ve, \mu_\ve)$ and any $0 < \ve \le 1$
\begin{equation}\label{Sec2:10}
|u_\ve(x)| \le C_\alpha \left(  \mu_{\ve}^{\frac{2}{p_\ve-2}(1-2\alpha)} d_g(x_\ve, x)^{-\frac{4(1-\alpha)}{p_\ve-2}} + \eta_\ve(\delta_\alpha)  d_g(x_\ve, x)^{- \frac{4\alpha}{p_\ve-2}} \right)   
\end{equation}
holds. 

\begin{proof}[Proof of \eqref{Sec2:10}] Let $0 < \alpha < \frac12$. For $x \in M \backslash B_g(x_\ve, \mu_\ve)$ we let 
\[ \Phi_{\ve}^{\alpha}(x) = \mu_\ve^{\frac{2}{p_\ve-2}(1-2\alpha)} G_{h_\ve}(x_\ve, x)^{\frac{4(1-\alpha)}{(n-2)(p_\ve-2)}} + \eta_\ve(\delta)G_{h_\ve}(x_\ve,x)^{\frac{4\alpha}{(n-2)(p_\ve-2)}},\]
where $G_{h_\ve}$ is the Green's function of $\Delta_g+h_\ve$. It follows from properties of the Green's function that there exists $\delta_\alpha'>0$  such that
$$d_g(x, x_{\ve})^2\left(\frac{|\nabla_x G_{h_\ve}(x_{\ve},x)|^2}{G_{h_\ve}(x_{\ve},x)^2}-h_\ve(x)\right)\geq \frac{(n-2)^2}{4}\hbox{ for }d_g(x, x_{\ve})<\delta_\alpha'.$$
Let $\delta_\alpha\in (0, \delta_\alpha')$ be such that 
$$ 2\Vert h \Vert_{L^\infty(M)} \delta_\alpha^2 < (n-2)^2\alpha(1-\alpha) $$
and let $0 < \delta \le \delta_\alpha$ be fixed.  We claim that there exists $C >0$ such that, for any $0 < \ve \le 1$,
\ben \label{Sec2:101}
 \left| \left| \frac{u_\ve}{\Phi_{\ve}^\alpha} \right| \right|_{L^\infty(M \backslash B_g(x_{\ve}, \mu_\ve))}  \le C
 \een
holds. By \eqref{greenfonc} it is easily seen that \eqref{Sec2:10} follows from \eqref{Sec2:101}. We thus prove \eqref{Sec2:101}. We let, for any $0 < \ve \le 1$, $y_\ve \in M \backslash B_g(x_\ve, \mu_\ve)$ be such that
\[ \left| \frac{u_\ve(y_\ve)}{\Phi_{\ve}^{\alpha}(y_\ve)}\right| = \max_{x \in M \backslash B_g(x_\ve, \mu_\ve)} \left| \frac{u_\ve(x)}{\Phi_\ve^{\alpha}(x)} \right|.\]
We proceed by contradiction and assume that, up to passing to a subsequence $(\ve_k)_{k \ge0}$ with $\lim_{k \to + \infty} \ve_k = 0$, and up to replacing $u_{\ve_k}$ by $-u_{\ve_k}$ (which still solves \eqref{Intro:eq1}), we have 
\ben \label{Sec2:102}
 \frac{u_{\ve_k}(y_{\ve_k})}{\Phi_{\ve_k}^{\alpha}(y_{\ve_k})} \to + \infty 
 \een
 as $k \to + \infty$. We first assume that, up to a subsequence, 
\begin{equation} \label{Sec2:11}
\mu_{\ve_k} =o( d_g(y_{\ve_k}, x_{\ve_k})) \quad \text{ and } \quad d_g(x_{\ve_k}, y_{\ve_k}) \le \delta_\alpha
\end{equation}
 as $k \to +\infty$. By definition of $y_{\ve_k}$ we have
\begin{equation} \label{Sec2:12}
 \frac{\Delta_g u_{\ve_k}(y_{\ve_k})}{u_{\ve_k}(y_{\ve_k})} \ge \frac{\Delta_g \Phi_{\ve_k}^{\alpha}(y_{\ve_k})}{ \Phi_{\ve_k}^{\alpha}(y_{\ve_k})}.
 \end{equation}
On the one side, straightforward computations show that 
\[  d_g(x_{\ve_k}, y_{\ve_k})^2 \frac{\Delta_g \Phi_{\ve_k}^{\alpha}(y_{\ve_k})}{ \Phi_{\ve_k}^{\alpha}(y_{\ve_k})} \geq  \frac{(n-2)^2}{2}\alpha(1-\alpha) >0\hbox{ for }d_g(x, x_{\ve})<\delta_\alpha \]
as $k \to +\infty$. On the other side,   \eqref{Intro:eq1} and \eqref{Sec2:11} show that
\[ d_g(x_{\ve_k}, y_{\ve_k})^2  \frac{\Delta_g u_{\ve_k}(y_{\ve_k})}{u_{\ve_k}(y_{\ve_k})} \le  d_g(x_{\ve_k}, y_{\ve_k})^2 |u_{\ve_k}(y_{\ve_k})|^{p_{\ve_k}-2}  + 2\Vert h \Vert_{L^\infty(M)} \delta_\alpha^2 \]
as $k \to +\infty$. With \eqref{Sec2:6}, \eqref{Sec2:12} and the choice of $\delta_\alpha$ this yields a contradiction for $k$ large enough, and shows that \eqref{Sec2:11} cannot happen. Thus we either have, up to a subsequence, $\frac{d_g(x_{\ve_k}, y_{\ve_k})}{\mu_{\ve_k}} \to D \in [1, + \infty)$ or $d_g(x_{\ve_k}, y_{\ve_k}) \ge \delta_\alpha$ as $k \to + \infty$. Using \eqref{Sec2:4} in the first case and \eqref{Sec2:9} in the second case we obtain a contradiction with \eqref{Sec2:102}. This proves \eqref{Sec2:101} and thus \eqref{Sec2:10}.
\end{proof}

\medskip

\textbf{Step $4$:} We claim that  
\begin{equation} \label{Sec2:13}
\big| u_\ve(x) \big|\lesssim B_\ve^+(x) \quad \text{ for any } x \in M.
\end{equation}
where $B_\ve^+$ is defined in \eqref{std:bubble}.

\begin{proof}[Proof of \eqref{Sec2:13}]
We first prove a weaker statement. Let $0 < \delta < i_g(M)$ be fixed. We claim that there exists $C_\delta >0$ such that, for any $0 < \ve \le 1 $ and $x \in M$,
\begin{equation} \label{Sec2:131}
\big| u_\ve(x) \big|\le C_\delta \left( B_\ve^+(x) + \eta_\ve(\delta) \right)  
\end{equation}
holds, where $B_{\ve}^+$ is as in  \eqref{std:bubble}. Let $(y_\ve)_{0 < \ve \le 1}$ be any family of points in $M$. First, if $d_g(y_\ve, x_\ve) \ge \delta$, \eqref{Sec2:131} follows from \eqref{Sec2:9}. We can thus assume that $y_\ve \in B_g(x_\ve, \delta)$ for all $0 < \ve \le 1$. A representation formula for $u_\ve$ at $y_\ve$ gives, with \eqref{Intro:eq1}, 
\begin{equation} \label{Sec2:14}
\begin{aligned}
 u_\ve(y_\ve) & = \int_{B_g(x_\ve, \mu_\ve) } G_{h_\ve}(y_\ve, y) |u_\ve(y)|^{p_\ve-2}u_\ve(y) dy  \\
&+ \int_{M \backslash B_g(x_\ve, \mu_\ve)} G_{h_\ve}(y_\ve, y) |u_\ve(y)|^{p_\ve-2}u_\ve(y) dy.
\end{aligned} \end{equation}
Straightforward computations with \eqref{Sec2:3}, \eqref{Sec2:4} and \eqref{greenfonc} show that 
\begin{equation} \label{Sec2:15}
\left| \int_{B_g(x_\ve, \mu_\ve)} G_{h_\ve}(y_\ve, y) |u_\ve(y)|^{p_\ve-2}u_\ve(y) dy \right| \lesssim B_\ve^+(y_\ve)
\end{equation}
(see e.g. \cite[Proposition $6.1$]{HebeyZLAM}). Straightforward computations using \eqref{Sec2:3}, \eqref{Sec2:10} and \eqref{greenfonc} show that 
 \begin{equation} \label{Sec2:16}
\left| \int_{M \backslash B_g(x_\ve, \mu_\ve)} G_{h_\ve}(y_\ve, y) |u_\ve(y)|^{p_\ve-2}u_\ve(y) dy \right| \lesssim B_\ve^+(y_\ve) + \eta_\ve(\delta)^{p_\ve-1}.
\end{equation}
Plugging \eqref{Sec2:15} and \eqref{Sec2:16} in \eqref{Sec2:14} and since $\eta_\ve(\delta) \to 0$ as $\ve \to 0$ proves \eqref{Sec2:131}. We now claim that, for any $0 < \delta < i_g(M)$ fixed,
\ben \label{Sec2:132}
\eta_\ve(\delta) \lesssim \mu_\ve^{\pui}
\een  
holds for $\ve$ small enough. With \eqref{std:bubble}, \eqref{Sec2:131} and \eqref{Sec2:132} this will conclude the proof of \eqref{Sec2:13}. We prove \eqref{Sec2:132} by contradiction: we fix $0 < \delta < i_g(M)$ and, up to passing to a subsequence $(\ve_k)_{k \ge0}$ with $\lim_{k \to + \infty} \ve_k = 0$ we assume that 
\ben \label{Sec2:17}
\frac{\eta_{\ve_k}(\delta)}{\mu_{\ve_k}^{\pui}} \to + \infty
\een
as $k \to + \infty$. We let, for $k \ge 0$ and for any $x \in M$, 
$$\hat{u}_k (x) = \frac{u_{\ve_k}(x)}{\eta_{\ve_k}(\delta)},$$
and let $y_k \in M \backslash B_g(x_{\ve_k}, \delta)$ be such that $|\hat{u}_k(y_k)| = 1$. By \eqref{std:bubble}, \eqref{Sec2:131} and \eqref{Sec2:17} the family $(\hat{u}_k)_{k \ge0}$ is uniformly bounded in compact subsets of $M \backslash \{x_0\}$, where $x_0 = \lim_{\ve \to 0} x_\ve$. By \eqref{Intro:eq1}, standard elliptic theory and since $\eta_{\ve_k}(\delta)\to 0$ as $k \to + \infty$, $\hat{u}_k$ thus converges in $C^2_{loc}(M \backslash \{x_0\})$, up to a subsequence, to some function $\hat{u}_0 \in C^2(M \backslash \{x_0\})$ that satisfies $\Delta_g \hat{u}_0 + h \hat{u}_0 = 0$ in $M \backslash \{x_0\}$ and $|\hat{u}_0(y_0)| = 1$, where $y_0 = \lim_{k \to + \infty} y_k \in M \backslash B_g(x_0, \delta)$. Passing \eqref{Sec2:131} to the limit pointwise shows in addition that we have 
$$ |\hat{u}_0(x)| \le C_\delta \quad \text{ for all } x \in M \backslash \{x_0\} .$$
Classical arguments thus show that the singularity of $\hat{u}_0$ at $x_0$ is removable and that $\hat{u}_0$ satisfies $\Delta_g \hat{u}_0 + h \hat{u}_0 = 0$ in $M$ in a strong sense. This implies $\hat{u}_0 \equiv 0$  since $\Delta_g + h$ is coercive, which is a contradiction with $|\hat{u}_0(y_0)| = 1$. Thus \eqref{Sec2:132} holds true and \eqref{Sec2:13} is proven. 
\end{proof}

A consequence of \eqref{Sec2:13}, \eqref{Intro:eq1} and standard elliptic theory is the following estimate on the first and second derivatives of $u_\ve$: for $0 \le k \le 2$,
\ben \label{Sec2:18}
|\nabla^k u_\ve(x)| \lesssim \frac{\mu_\ve^{\pui}}{\left(  \mu_\ve + d_g(x_\ve, x) \right)  ^{n-2+k}} \quad \text{ for all } x \in M \text{ and } 0 < \ve \le 1.
\een
Estimate \eqref{Sec2:18} follows from a scaling argument and from standard elliptic theory.

\medskip

\textbf{Step $5$:} we now claim that 
\ben \label{Sec2:19}
\crit -p_\ve = \left \{ \bal & O(\mu_\ve)  & \text{ if } n = 3 \\ & O\left(\mu_\ve^2 \ln \frac{1}{\mu_\ve}\right) & \text{ if } n=4\\ & O(\mu_\ve^2) & \text{ if } n \ge 5 \eal \right. .
\een
as $\ve \to 0$. In particular $\lim_{\ve\to 0}\me^{\crit-p_\ve}=1$.

\begin{proof}[Proof of \eqref{Sec2:19}]
Let $0 < \delta < i_g(M)$ be fixed and let 
$$\Omega_\ve = B_g(x_\ve, \delta).$$ We write a Pohozaev identity for $u_\ve$ on $\Omega_\ve$: let $X$ be any smooth vector field in $M$. Integrating \eqref{Intro:eq1} by parts against $\langle X, \nabla u_\ve \rangle $ classically shows that
\ben \label{Sec2:20}
\bal
&\int_{\Omega_\ve} h_\ve u_\ve \langle X, \nabla u_\ve \rangle dv_g + \int_{\Omega_\ve} \left(  \nabla X - \frac{1}{2}  \textrm{div}_g X g \right)  (\nabla u_\ve, \nabla u_\ve) dv_g \\
&+ \int_{\partial \Omega_\ve} \left(  \frac12 \langle X, \nu \rangle |\nabla u_\ve|_g^2 - \langle X, \nabla u_\ve \rangle \partial_\nu u_\ve - \frac{1}{p_\ve} \langle X, \nu \rangle |u_\ve|^{p_\ve} \right)   d\sigma_g \\
& = - \frac{1}{p_\ve} \int_{\Omega_\ve} \textrm{div}_g X |u_\ve|^{p_\ve} dv_g
\eal
\een
(see e.g. \cite[Proposition 6.2]{HebeyZLAM}). Let now  $X_\ve$ be the smooth vector field whose coordinates in the exponential chart at $x_\ve$ are $(X_\ve(x))^i = x^i$. For $x \in \Omega_\ve$ and any $1 \le i,j \le n$ we have $\nabla_i X_\ve^j(x) = \delta_i^j + O(d_g(x_\ve, x)^2)$. As a consequence, 
\be 
 \int_{\Omega_\ve} \left(  \nabla X - \frac{1}{2}  \textrm{div}_g X g \right)  (\nabla u_\ve, \nabla u_\ve) dv_g = \int_{\Omega_\ve}\left(  1 - \frac{n}{2}  + O(d_g(x_\ve, \cdot)^2) \right)   |\nabla u_\ve|_g^2 dv_g
\ee
and
\be 
 - \frac{1}{p_\ve} \int_{\Omega_\ve} \textrm{div}_g X |u_\ve|^{p_\ve} dv_g = \int_{\Omega_\ve} \left( - \frac{n}{p_\ve} + O(d_g(x_\ve, \cdot)^2 \right)     |u_\ve|^{p_\ve} dv_g. 
\ee
Integrating \eqref{Intro:eq1} by parts shows independently that 
\be
\int_{\Omega_\ve} |\nabla u_\ve|_g^2 dv_g = \int_{\Omega_\ve} |u_\ve|^{p_\ve}dv_g - \int_{\Omega_\ve} h_\ve u_\ve^2 dv_g + \int_{\partial\Omega_\ve} u_\ve \partial_{\nu} u_{\ve} d\sigma_g,
\ee
so that \eqref{Sec2:20} becomes  
\begin{equation*} 
\begin{aligned}
 &n \left(  \frac{1}{\crit } - \frac{1}{p_\ve} \right)    \int_{\Omega_\ve} |u_\ve|^{p_\ve}dv_g \\
& =   \int_{\Omega_\ve} \left(  \frac{n-2}{2} u_\ve^2  + h_\ve u_\ve \langle X, \nabla u_\ve \rangle \right)   dv_g \nonumber\\
& + O \left(   \int_{\Omega_\ve} d_g(x_\ve, \cdot)^2 |\nabla u_\ve|_g^2 dv_g +  \int_{\Omega_\ve} d_g(x_\ve, \cdot)^2  |u_\ve|^{p_\ve} dv_g  \right)   \nonumber\\
&+ \int_{\partial \Omega_\ve} \left(  \frac12 \langle X, \nu \rangle |\nabla u_\ve|_g^2 - \langle X, \nabla u_\ve \rangle \partial_\nu u_\ve - \frac{n-2}{2} u_\ve \partial_{\nu} u_\ve \right.\nonumber\\
&\quad \quad \left.- \frac{1}{p_\ve} \langle X, \nu \rangle |u_\ve|^{p_\ve} \right)   d\sigma_g .\nonumber
\end{aligned}
\end{equation*}
Straightforward computations using \eqref{Sec2:13} and \eqref{Sec2:18} show that
\be 
\bal
& \int_{\Omega_\ve} d_g(x_\ve, \cdot)^2 |\nabla u_\ve|_g^2 dv_g +  \int_{\Omega_\ve} d_g(x_\ve, \cdot)^2  |u_\ve|^{p_\ve} dv_g \\
& +  \int_{\Omega_\ve} \big| h_\ve u_\ve \langle X, \nabla u_\ve \rangle \big|dv_g +  \int_{\Omega_\ve} |h_\ve| u_\ve^2 dv_g  = \left \{ \bal & O(\mu_\ve)  & \text{ if } n = 3 \\ & O(\mu_\ve^2 \ln \frac{1}{\mu_\ve}) & \text{ if } n=4\\ & O(\mu_\ve^2) & \text{ if } n \ge 5 \eal \right. .
\eal 
\ee
By \eqref{Sec2:13} and \eqref{Sec2:18} we have 
\be 
 \int_{\partial \Omega_\ve} \left(  \frac12 \langle X, \nu \rangle |\nabla u_\ve|_g^2 - \langle X, \nabla u_\ve \rangle \partial_\nu u_\ve  - \frac{n-2}{2} u_\ve \partial_{\nu} u_\ve- \frac{1}{p_\ve} \langle X, \nu \rangle |u_\ve|^{p_\ve} \right)   d\sigma_g = O(\mu_\ve^{n-2}). 
\ee
Plugging the latter computations in \eqref{Sec2:20} thus shows that
\ben \label{Sec2:21}
n \left(  \frac{1}{\crit } - \frac{1}{p_\ve} \right)    \int_{\Omega_\ve} |u_\ve|^{p_\ve}dv_g = \left \{ \bal & O(\mu_\ve)  & \text{ if } n = 3 \\ & O(\mu_\ve^2 \ln \frac{1}{\mu_\ve}) & \text{ if } n=4\\ & O(\mu_\ve^2) & \text{ if } n \ge 5 \eal \right. .
\een
Finally, using \eqref{Sec2:3} and \eqref{Sec2:4}, Fatou's lemma shows that 
$$ \int_{\Omega_\ve} |u_\ve|^{p_\ve}dv_g \ge c_0 $$
for some positive $c_0 >0$ independent of $\ve$. Going back to \eqref{Sec2:21} thus proves \eqref{Sec2:19}.
\end{proof}

\medskip

\textbf{Step $6$: End of the proof of Proposition \ref{prop:C0}.} We are now in position to conclude the proof of Proposition \ref{prop:C0}. The equality $c=1$ follows from \eqref{defc} and \eqref{Sec2:19} and we thus only need to prove  \eqref{estPropC0}. We proceed by contradiction and assume that \eqref{estPropC0} is false: up to passing to a subsequence $(\ve_k)_{k \ge0}$ with $\lim_{k \to + \infty} \ve_k = 0$, there exists a sequence $(y_k)_{k \ge 0}$ of points of $M$ such that 
\ben \label{Sec6:23}
|u_{\ve_k}(y_k) - B^{\mu_{\ve_k}, x_{\ve_k}}(y_k)| \ge \eta_0 B_{\ve_k}^+(y_k) 
\een
for some $\eta_0 >0$, where $B^{\mu_\ve, x_\ve}$ and $B_\ve^+$ are as in \eqref{Ve2} and \eqref{std:bubble}. Since by \eqref{Sec2:19} we have $c=1$, \eqref{Sec2:4} shows with \eqref{Sec6:23} that 
\ben \label{Sec2:24}
\frac{d_g(x_{\ve_k}, y_{\ve_k}) }{\mu_{\ve_k}} \to + \infty
\een
as $k \to + \infty$. We write a representation formula for $u_{\ve_k}$ in $M$: if $G_{h_{\ve_k}}$ again denotes the Green's function for $\Delta_g + h_{\ve_k}$, we have with \eqref{Intro:eq1}:
\ben \label{Sec2:25}
u_{\ve_k}(y_k) = \int_M G_{h_{\ve_k}}(y_k, y) |u_{\ve_k}(y)|^{p_{\ve_k}-2} u_{\ve_k}(y) dv_g(y). 
\een
Let $R>1$ be fixed and let $x \in B(0,R)$. By \eqref{greenfonc} and \eqref{Sec2:24} we have
\[ (n-2) \omega_{n-1} d_g(x_{\ve_k}, y_k )^{n-2} G_{h_{\ve_k}} \left( \exp_{x_{\ve_k}}(\mu_{\ve_k} x), y_{k}) \right)   \to 1 \quad \textrm{ if } d_g(x_{\ve_k}, y_{k}) \to 0 \]
uniformly in $B(0,R)$ as $k \to + \infty$. Since $h_{\ve_k} \to h$ in $C^1(M)$ we also have 
\[ G_{h_{\ve_k}} \left( y_k, \exp_{x_{\ve_k}}(\mu_{\ve_k} x) \right)   = G_h(x_{\ve_k}, y_k) + o(1) \quad \textrm{ if } d_g(x_{\ve_k}, y_k) \not \to 0\]
as $k \to +\infty$. 
Since $y_k$ satisfies \eqref{Sec2:24}, and by \eqref{greenfonc}, the dominated convergence theorem gives with the definition \eqref{Ve3}
\ben \label{theorie3} \begin{aligned}
&\int_{B_g(x_{\ve_k}, R\mu_{\ve_k})}G_{h_{\ve_k}}(y_k, \cdot) |u_{\ve_k}|^{p_{\ve_k}-2} u_{\ve_k} dv_g \\
& = (1+o(1)) G_{h_{\ve_k}}(x_{\ve_k}, y_k) \mu_{\ve_k}^{\pui} \int_{B(0,R)} |V|^{\crit -2} V dx \\
& = (1+o(1)) G_{h_{\ve_k}}(x_{\ve_k}, y_k)(n-2)\omega_{n-1} \mu_{\ve_k}^{\pui} \left(  \lambda(V)+ O(R^{-2}) \right)   \\
& =  B^{\mu_{\ve_k}, x_{\ve_k}}(y_k) + o(B_{\ve_k}(y_k))  + O(R^{-2} B_{\ve_k}(y_k))
\end{aligned} \een
where $\lambda(V)$ is as in \eqref{eq:expansion}, where the third line follows from \eqref{eq:decayV} and \eqref{deflambda} and the last one from \eqref{Ve2} and \eqref{Sec2:24}. Independently, straightforward computations show with \eqref{Sec2:13} that
\[ \Big| \int_{M \backslash B_g(x_{\ve_k}, R\mu_{\ve_k})}G_{h_{\ve_k}}(y_k, \cdot) |u_{\ve_k}|^{p_{\ve_k}-2} u_{\ve_k} dv_g  \Big| \lesssim R^{-2} B_{\ve_k}^+(y_k)+o\left(B_{\ve_k}^+(y_k)\right) \]
holds. Plugging the latter and \eqref{theorie3} in \eqref{Sec2:25} shows that, for any fixed $R >0$ and for any $k$ large enough we have 
\[ u_{\ve_k}(y_k) = B^{\mu_{\ve_k}, x_{\ve_k}}(y_k) + o\left( B_{\ve_k}^+(y_k) \right)   + O\left( R^{-2} B_{\ve_k}^+(y_k) \right)  . \]
By choosing $R$ large enough and passing to a subsequence we get a contradiction with \eqref{Sec6:23}. This concludes the proof of Proposition \ref{prop:C0}. $\square$

\medskip

We conclude this section by showing an improvement of \eqref{Sec2:18} when $k=1$ that will be needed in the next section. Let $(h_\ve)_{0 < \ve \le 1}$, $(p_\ve)_{0 < \ve \le 1}$ be as in the statement of Proposition \ref{prop:C0} and let $(u_\ve)_{0 < \ve \le 1}$ be a family of solutions of \eqref{Intro:eq1} satisfying \eqref{one:bubble} for some $c >0$. In particular Proposition \ref{prop:C0} applies to $(u_\ve)_{0<\ve \le 1}$ and \eqref{estPropC0} holds. We claim that there exists a family of positive real numbers $(\sigma_\ve)_{0< \ve \le 1}$, with $\sigma_\ve \to 0$  as $\ve \to 0$, and a positive constant $C$ such that for any $x \in M$ and any $0< \ve \le 1$ we have
\begin{equation} \label{est:der:better}
\Big|  \nabla u_\ve(x) - \nabla B^{\mu_\ve, x_\ve}(x) \Big|  \le \sigma_\ve \frac{\mu_{\ve}^{\frac{n-2}{2}}}{(\mu_\ve + d_g(x_\ve,x) )^{n-1}} + C \mu_\ve^{\frac{n-2}{2}}    
 \end{equation}
 where $B^{\mu_\ve, x_\ve}$ is as in \eqref{Ve2}. 

\begin{proof}[Proof of \eqref{est:der:better}]
Let $0<2 \delta < i_g(M)$ be fixed. We define $w_\ve(x) = u_\ve \big( \exp_{x_\ve}(x) \big)$ for $x \in B(0,2\delta)$. We first prove that there exists a family  $(\sigma_\ve)_{0< \ve \le 1}$, with $\sigma_\ve \to 0$  as $\ve \to 0$, and a constant $C>0$ such that for any $x \in B(0,2\delta)$ and any $0< \ve \le 1$ we have
\begin{equation} \label{est:der:better2}
\Big|  \partial_i w_\ve(x) - \mu_\ve^{- \frac{n}{2}} (\partial_i V)\Big( \frac{x}{\mu_\ve} \Big) \Big|  \le \sigma_\ve \frac{\mu_{\ve}^{\frac{n-2}{2}}}{(\mu_\ve + |x| )^{n-1}} + C \mu_\ve^{\frac{n-2}{2}}. 
 \end{equation}
Let $x \in B(0, 2 \delta)$. The representation formula \eqref{Sec2:25} can be differentiated with respect to $x$: this gives, for $1 \le i \le n$,
 $$\partial_i w_{\ve}(x)  = \int_M \partial_i G_{h_{\ve}} \big(  \exp_{x_\ve}(x), y\big) |u_{\ve}(y)|^{p_{\ve}-2} u_{\ve}(y) dv_g(y),$$
 where we have let $ \partial_i G_{h_{\ve}}(  \exp_{x_\ve}(x), y) = \frac{\partial}{\partial x_i} \big( G_{h_\ve} \big( \exp_{x_\ve}(\cdot), y \big) \big)(x)$. Standard results on the Green's functions (see e.g. \cite{RobDirichlet}) show that there exists $C >0$ such that for all $\ve$ small enough and all $x,y \in B(0, 2 \delta)$,
 \begin{equation} \label{greenfonc2}
  \Bigg| \partial_i G_{h_{\ve}} \big( \exp_{x_\ve}(x),  \exp_{x_\ve}(y) \big) + \frac{1}{\omega_{n-1}}\frac{(x-y)_i}{|x-y|^n}\Bigg| \le C|x-y|^{2-n} +C|x|\cdot |x-y|^{1-n}
  \end{equation}
holds. Let now, for $x \in \R^n$, $\tilde{V}_\ve(x) =  \mu_\ve^{-\pui}V \big(  \frac{x}{\mu_\ve}  \big)$. Let $(z_\ve)_{0 < \ve \le 1}$ be any family of points in $B(0,2\delta)$. Straightforward computations using \eqref{Sec2:19} and \eqref{estPropC0} show that 
 $$\begin{aligned} 
 \partial_i w_{\ve}(z_\ve)  & = -\int_{B(0,2 \delta)} \frac{1}{\omega_{n-1}}\frac{(z_\ve-y)_i}{|z_\ve-y|^n} |\tilde{V}_\ve|^{p_\ve-2} \tilde{V}_\ve dy + o\Bigg(\frac{\mu_{\ve}^{\frac{n-2}{2}}}{(\mu_\ve + |z_\ve| )^{n-1}}\Bigg) + O(\mu_\ve^{\frac{n-2}{2}}) \\
 & = -\int_{B(0,2 \delta)} \frac{1}{\omega_{n-1}}\frac{(z_\ve-y)_i}{|z_\ve-y|^n} |\tilde{V}_\ve|^{2^*-2} \tilde{V}_\ve dy + o\Bigg(\frac{\mu_{\ve}^{\frac{n-2}{2}}}{(\mu_\ve + |z_\ve| )^{n-1}}\Bigg) + O(\mu_\ve^{\frac{n-2}{2}}) \\
&+ \int_{B(0,2 \delta)} \frac{1}{\omega_{n-1}}\frac{(z_\ve-y)_i}{|z_\ve-y|^n} \left(|\tilde{V}_\ve|^{2^*-2}-|\tilde{V}_\ve|^{p_\ve-2}\right) \tilde{V}_\ve dy.
\end{aligned} $$
Let us estimate the last term. Using the control \eqref{defc} , we get that
$$\begin{aligned} 
 &\left| \int_{B(0,2 \delta)} \frac{1}{\omega_{n-1}}\frac{(z_\ve-y)_i}{|z_\ve-y|^n} \left(|\tilde{V}_\ve|^{2^*-2}-|\tilde{V}_\ve|^{p_\ve-2}\right) \tilde{V}_\ve dy\right| \\
 &\leq   \int_{B(0,2 \delta)}  |z_\ve-y|^{1-n} \left||\tilde{V}_\ve|^{2^*-p_\ve}-1\right| \cdot |\tilde{V}_\ve|^{p_\ve-1} dy\\
 &\leq   \int_{B(0,2 \delta)\cap \{|\tilde{V}_\ve|<\mu_\ve^{\frac{n-2}{2}}\}}  |z_\ve-y|^{1-n} \left||\tilde{V}_\ve|^{2^*-p_\ve}-1\right| \cdot |\tilde{V}_\ve|^{p_\ve-1} dy\\
 &+   \int_{B(0,2 \delta)\cap \{|\tilde{V}_\ve|\geq \mu_\ve^{\frac{n-2}{2}}\}}  |z_\ve-y|^{1-n} \left||\tilde{V}_\ve|^{2^*-p_\ve}-1\right| \cdot |\tilde{V}_\ve|^{p_\ve-1} dy\\
 &\leq  C\mu_\ve^{\frac{n-2}{2}(p_\ve-1)}+   \int_{B(0,2 \delta)\cap \{|\tilde{V}_\ve|\geq \mu_\ve^{\frac{n-2}{2}}\}}  |z_\ve-y|^{1-n} \left||\tilde{V}_\ve|^{2^*-p_\ve}-1\right| \cdot |\tilde{V}_\ve|^{p_\ve-1} dy\\
 \end{aligned} $$
The control \eqref{eq:decayV} and \eqref{defc} yield
$$\{|\tilde{V}_\ve|\geq \mu_\ve^{\frac{n-2}{2}}\}\Rightarrow  \left||\tilde{V}_\ve|^{2^*-p_\ve}-1\right|=o(1)$$
as $\ve\to 0$, so that we get 
$$\begin{aligned} 
 \partial_i w_{\ve}(z_\ve)
 & = -\int_{\R^n} \frac{1}{\omega_{n-1}}\frac{(z_\ve-y)_i}{|z_\ve-y|^n} |\tilde{V}_\ve|^{2^*-2} \tilde{V}_\ve dy + o\Bigg(\frac{\mu_{\ve}^{\frac{n-2}{2}}}{(\mu_\ve + |z_\ve| )^{n-1}}\Bigg) + O(\mu_\ve^{\frac{n-2}{2}}) \\
 & = \partial_i \tilde{V}_\ve(z_\ve) +  o\Bigg(\frac{\mu_{\ve}^{\frac{n-2}{2}}}{(\mu_\ve + |z_\ve| )^{n-1}}\Bigg) + O(\mu_\ve^{\frac{n-2}{2}}),
\end{aligned} $$
where the last equality follows from a representation formula for \eqref{yamabe:intro} in $\R^n$, since $V \in \Sigma$. This proves \eqref{est:der:better2}. Estimate \eqref{est:der:better} now follows from  \eqref{greenfonc}, \eqref{est:der:better2}, \eqref{greenfonc2} and the explicit expression of $B^{\mu_\ve, x_\ve}$ in \eqref{Ve2}.
\end{proof}

\begin{remark}
We assumed for simplicity here that $\Delta_g +h$ is coercive. If $\Delta_g +h$ had a kernel and, more generally, if $(u_\ve)_{0 < \ve \le 1}$ was a finite-energy family of solutions of \eqref{Intro:eq1} (without the one-bubble assumption \eqref{one:bubble}), an analogue of Proposition \ref{prop:C0} would still hold. We refer for instance to Premoselli \cite{Premoselli13}  where a generalisation of Proposition \ref{prop:C0} was proven when $p_\ve = \crit $ for all $0 < \ve \le 1$, in the general multi-bubble case and when $\Delta_g + h$ is allowed to have a kernel. The proof of \cite{Premoselli13} allows to deal with any multi-bubble configuration and would still adapt to the asymptotically critical setting $p_\ve \le \crit $, $p_\ve \to \crit $ as a careful inspection of the proof shows. In the one-bubble case \eqref{one:bubble} that we consider here, the proof of Proposition \ref{prop:C0} that we gave in this section is shorter and more direct. 
\end{remark}

 \section{Proof of Theorems \ref{maintheo} and \ref{maintheo2}} \label{Sec5}

Throughout this section we let $(M^n,g), n \ge 3$ be a smooth, connected and closed manifold, $(h_\ve)_{0 < \ve \le 1}$ be a family of $C^1$ functions in $M$ converging in $C^1(M)$ towards $h$ and let $(p_\ve)_{0 < \ve \le 1}$ be a family of real numbers satisfying $2 < p_\ve \le \crit $ for all $0 < \ve \le 1$ and such that $\lim_{\ve \to 0} p_\ve = \crit $. We assume that $\Delta_g + h$ is coercive. We assume again, as in Section \ref{Sec3}, that $(\ue)_{0<\ve\le1}$ is a family of possibly sign-changing solutions of \eqref{Intro:eq1} satisfying \eqref{one:bubble}: 
 \be u_\ve = c B_\ve + o(1) \quad \text{ in } H^1(M), 
\ee 
for some $c >0$, where $B_\ve$ is a family of bubbles as in Definition \ref{def:bubble} centered at $x_\ve$ with radius $\mu_\ve$ and modeled on $V \in \Sigma$. In particular the analysis of Section \ref{Sec3} applies: Proposition \ref{prop:C0}, \eqref{Sec2:18} and \eqref{est:der:better} show that there exists a family $(\sigma_\ve)_{0 < \ve \le 1}$ with $\sigma_\ve \to 0$ as $\ve\to 0$ and a positive constant $C$ such that, for any $x \in M$ and any $\ve >0$,
 \ben \label{estimees} 
  \bal 
   \big| u_\ve(x) - B^{\mu_\ve,x_\ve}(x) \big| & \le \sigma_\ve B_\ve^+(x) , \\
   \Big|  \nabla u_\ve(x) - \nabla B^{\mu_\ve, x_\ve}(x) \Big|  &\le \sigma_\ve \frac{\mu_{\ve}^{\frac{n-2}{2}}}{(\mu_\ve + d_g(x_\ve,x) )^{n-1}} + C \mu_\ve^{\frac{n-2}{2}}   \quad \text{ and } \\
  \left( \mu_\ve + d_g(x_\ve, x) \right)  ^k|\nabla^k u_\ve(x)| &\le C B_\ve^+(x) \quad \text{ for } k = 1,2
\eal  
 \een
hold, where  $B_\ve^+$ and $B^{\mu_\ve,x_\ve}$ are as in \eqref{std:bubble} and \eqref{Ve2}. Equation \eqref{Sec2:19} also applies and shows that $\crit -p_\ve = O(\mu_\ve)$. In this section we will prove Theorem \ref{maintheo} using \eqref{estimees}. 
\medskip We first recall the classical conformal normal coordinates result of Lee-Parker \cite{LeeParker}. It states the existence of a positive function $\Lambda \in C^\infty(M \times M)$ such that, letting $\Lambda_y(x) = \Lambda(y,x)$ for any $y,x \in M$ and $g_{y}: = \Lambda_y^{\frac{4}{n-2}} g$ we have
\ben \label{propLambda}
 \bal 
 \Lambda_y(x) & = 1 + O \left(  d_{g_y}(y, x)^2 \right)  , \\
 \text{Ric}_{g_y}(x) & = O \left(  d_{g_y}(y, x) \right)   \\
 S_{g_y} & = O \left(  d_{g_y}(y, x)^2 \right)   \quad \text{ and } \\
 \sqrt{|g_y|}(x) & = 1 + O\left(d_{g_y}(y, x)\right)  ^N  
\eal 
\een
as $d_{g_y}(y,x) \to 0$, for a fixed $N$ large enough, where $\text{Ric}_{g_y}$ denotes the Ricci tensor of $g_y$ and where it is understood that $\sqrt{|g_y|}$ is computed with respect to the exponential map of $g_{y}$ at $y$. For $\ve >0$ we let $g_{x_\ve} =  \Lambda_{x_\ve}^{\frac{4}{n-2}} g$ and we let $\exp_{x_\ve}^{g_{x_\ve}}$ be the exponential chart of $g_{x_\ve}$ at $x_\ve$. Let $\delta >0$ be such that $2 \delta <   \inf_{\xi\in M} i_{g_\xi}(M)$. We define $g_\ve$ and $v_\ve$ in $B(0,2\delta) \subset \R^n$ as follows:
\begin{equation}\label{def:ve}
 \bal g_\ve & =  \left( \exp_{x_\ve}^{g_{x_\ve}}\right)  ^* g_{x_\ve} \quad \text{ and } \\
v_\ve(x) & = \frac{u_\ve}{\Lambda_{x_\ve}} \circ  \exp_{x_\ve}^{g_{x_\ve}} .
\eal 
\end{equation}
First, \eqref{propLambda} shows that, for any fixed $x \in M$, we have 
\begin{equation} \label{DLmetriqueconf} d_{g_{x}}(x, y) = d_g(x, y) + O \left(  d_g(x, y)^3 \right)  \text{ as } d_g(x,y) \to 0. 
\end{equation}
The latter with \eqref{eq:decayV}, \eqref{Ve2} and \eqref{estimees} implies that 
 \ben \label{estimees2} 
  \bal 
   \Big| v_\ve(x) - \tilde{V}_\ve (x) \big| & \le \sigma_\ve \tilde{B}_\ve^+(x) +  C\mu_\ve^{\pui}, \\
   \Big|  \nabla v_\ve(x) - \nabla \tilde{V}_\ve(x) \Big| & \le \sigma_\ve \frac{\mu_{\ve}^{\frac{n-2}{2}}}{(\mu_\ve + |x| )^{n-1}} +C \mu_\ve^{\frac{n-2}{2}}  \quad \text{ and } \\
  \left( \mu_\ve + d_{g_\ve}(x_\ve, x) \right)  ^k|\nabla^k v_\ve(x)| &\le C \tilde{B}_\ve^+(x) \quad \text{ for } k = 1,2
\eal  
 \een
 for any $x \in B(0,2\delta)$, where $C>0$ is independent of $\ve$ and where we have let
 $$ \tilde{B}_\ve^+(x) = \frac{\mu_\ve^{\frac{n-2}{2}}}{\left(  \mu_\ve^2 + \frac{|x|^2}{n(n-2)}\right)  ^{\frac{n-2}{2}}} \quad \text{ and } \quad \tilde{V}_\ve(x) =  \mu_\ve^{-\pui}V \left(  \frac{x}{\mu_\ve}  \right)  . $$
Moreover, it follows from  \eqref{Sec2:4} that
\begin{equation}\label{cv:hve}
\lim_{\ve\to 0}\hat{v}_\ve=V\hbox{ in }C^2_{loc}(\rn), \hbox{ where }\hat{v}_\ve(x):=\me^{\frac{n-2}{2}}v_\ve(\me x).
\end{equation}
Using \eqref{Intro:eq1} and the conformal invariance property of the conformal Laplacian it is easily seen that $v_\ve$ satisfies 
\ben \label{eq:neweq}
\bal \Delta_\xi v_\ve + A_\ve(v_\ve) + \hat{h}_\ve v_\ve= \Lambda_{\ve}^{p_\ve - \crit } |v_{\ve}|^{p_\ve-2} v_\ve \quad \text{ in } B(0, 2 \delta),
\eal 
\een
where 
$\Lambda_\ve:=\Lambda_{x_\ve}\circ \exp_{x_\ve}^{g_{x_\ve}}$,  $\xi$ denotes the Euclidean metric and where we have let $\Delta_\xi = - \sum_{i=1}^n \partial_i^2$,
\begin{equation}\label{def:hat:h}
 \hat{h}_\ve(x ) = \Bigg[\Lambda_{x_\ve}^{2 - \crit }\left(  h_\ve - \frac{n-2}{4(n-1)} S_g \right)  \Bigg] \left(  \exp_{x_\ve}^{g_{x_\ve}}(x)\right)   
 \end{equation}
and 
\begin{equation}\label{def:Ae}
 A_\ve(v_\ve) =  \left(  \Delta_{g_\ve} - \Delta_\xi\right)   v_\ve + \frac{n-2}{4(n-1)} S_{g_\ve} v_\ve  .
 \end{equation}
Integrating \eqref{eq:neweq} against $\langle x, \nabla v_\ve\rangle + \frac{n-2}{2} v_\ve$ in $B(0, \delta)$ with respect to the Euclidean metric yields the following Pohozaev identity (see Marques \cite[Lemma 2.1]{Marques}):
\ben \label{pohofinal1}
\bal \int_{\partial B(0,\delta)} &\left(  \frac{n-2}{2} v_\ve \partial_{\nu} v_\ve - \frac{\delta}{2} |\nabla v_\ve|_\xi^2 + \delta (\partial_\nu v\ve)^2 + \frac{1}{p_\ve} \Lambda_{\ve}^{p_\ve-\crit } \delta |v_\ve|^{p_\ve} \right)   d\sigma \\
&= \frac{1}{p_\ve} \int_{B(0,\delta)} (p_\ve-\crit ) x^k \partial_k \Lambda_{\ve} \Lambda_{x_\ve}^{p_\ve-1-\crit } |v_\ve|^{p_\ve}dx \\
&+ n \left(  \frac{1}{p_\ve} - \frac{1}{\crit } \right)  \int_{B(0,\delta)} \Lambda_{\ve}^{p_\ve-\crit } |v_\ve|^{p_\ve}dx \\
&+ \int_{B(0,\delta)} \left(  x^k \partial_k v_\ve + \frac{n-2}{2} v_\ve\right)   A_\ve(v_\ve) dx \\
& +  \int_{B(0,\delta)} \left(  x^k \partial_k v_\ve + \frac{n-2}{2} v_\ve \right)   \hat{h}_\ve v_\ve dx.
\eal
\een
In all the computations of this section it is intended that all the terms involving $p_\ve-\crit $ disappear in the case where $p_\ve = \crit $ for any $\ve >0$. We now estimate all the integrals appearing in \eqref{pohofinal1}.  First, straightforward computations using \eqref{propLambda} and \eqref{estimees2} show that 
\ben \label{pohofinal2}
 \bal 
\frac{1}{p_\ve} &\int_{B(0,\delta)} (p_\ve-\crit ) x^k \partial_k \Lambda_{\ve} \Lambda_{\ve}^{p_\ve-1-\crit } |v_\ve|^{p_\ve}dx \\
&+ n \left(  \frac{1}{p_\ve} - \frac{1}{\crit } \right)  \int_{B(0,\delta)} \Lambda_{\ve}^{p_\ve-\crit } |v_\ve|^{p_\ve}dx \\
& = \frac{(n-2)^2}{4n}   (\crit - p_\ve)\int_{\R^n} |V|^{\crit }dx + o\left(  |p_\ve-\crit |\right)  
\eal \een
and that 
\ben \label{pohofinal21}
\bal
\int_{\partial B(0,\delta)} \left(  \frac{n-2}{2} v_\ve \partial_{\nu} v_\ve - \frac{\delta}{2} |\nabla v_\ve|_\xi^2 + \delta (\partial_\nu v_\ve)^2 + \frac{1}{p_\ve} \Lambda_{\ve}^{p_\ve-\crit } \delta |v_\ve|^{p_\ve} \right)   d\sigma \\
= O(\mu_\ve^{n-2}) = \left\{ \begin{array}{cc}
o(\mu_\ve^2) &\hbox{ if }n\geq 5\\
O(\mu_\ve^2) &\hbox{ if }n=4\\
\end{array}\right.
\eal 
\een
as $\ve \to 0$. A simple integration by parts shows that
\begin{equation}\label{poho:35} \bal
&  \int_{B(0,\delta)} \left(  x^k \partial_k v_\ve + \frac{n-2}{2} v_\ve \right)   \hat{h}_\ve v_\ve dx \\
& = \frac12 \int_{\partial B(0,\delta)} \delta \hat{h}_\ve v_\ve^2d\sigma  - \int_{B(0,\delta)} \left(  \hat{h}_\ve + \frac12 x^k \partial_k \hat{h}_\ve \right)   v_\ve^2 dx,
\eal 
\end{equation}
so that with \eqref{estimees2}, we get that

\begin{equation}\label{poho:78} 
\begin{aligned}
 \int_{B(0,\delta)} &\left( x^k \partial_k v_\ve + \frac{n-2}{2} v_\ve \right) \hat{h}_\ve v_\ve dx  \\
 & =   - \int_{B(0,\delta)} \left( \hat{h}_\ve + \frac12 x^k \partial_k \hat{h}_\ve \right) v_\ve^2 dx +\left\{ \begin{array}{cc}
o(\mu_\ve^2) &\hbox{ if }n\geq 5\\
O(\mu_\ve^2) &\hbox{ if }n=4\\
\end{array}\right.
 \\
 & =  - \int_{B(0,\delta)} \hat{h}_\ve v_\ve^2 dx+\left\{ \begin{array}{cc}
o(\mu_\ve^2) &\hbox{ if }n\geq 5\\
O(\mu_\ve^2) &\hbox{ if }n=4\\
\end{array}\right.
\end{aligned}
\end{equation}
Let now $w \in C^\infty(B(0,2\delta))$ be a smooth function. We have, for any $\ve >0$ and by \eqref{propLambda},
$$ \Delta_{g_\ve} w - \Delta_{\xi} w = - \left( g_\ve^{ij} -\delta^{ij}\right)   \partial^2_{ij}w - \partial_i (g_\ve)^{ij} \partial_j w + O\left( |x|^{N-1} |\nabla w|_{\xi} \right)  $$
where repeated indices are summed over. By \eqref{propLambda} we have $\text{Ric}_{g_\ve}(x_\ve) = 0$. Cartan's expansion of $g_\ve$ and the symmetries of the Riemann tensor show that for any $1 \le j \le n$ 
$$  \partial_i (g_\ve)^{ij}(x ) =  - \frac13 \text{Ric}_{g_\ve}(x_\ve)_{pj} x^p + O(|x|^2)= O(|x|^2). $$
By \eqref{propLambda} again we also have $S_g(x_\ve) = 0$, so that $\text{Rm}_{g_\ve}(x_\ve) = \hbox{Weyl}_{g_\ve}(x_\ve)$, where $\hbox{Weyl}_{g_\ve}$ is the $(4,0)$ Weyl tensor of $g_\ve$. Using again Cartan's expansion of $g_\ve$ we find that for any smooth function $w$ in $B(0, 2 \delta)$, we have
\begin{equation}\label{est:diff}
 \Delta_{g_\ve} w - \Delta_{\xi} w =  \frac13 \hbox{Weyl}_{g_\ve}(x_\ve)_{ipqj}x^p x^q \partial^2_{ij} w + O\left( |x|^3 |\nabla^2 w| \right)   + O\left( |x|^2 |\nabla w| \right)  .
 \end{equation}
Applying the latter to $v_\ve$ and using \eqref{estimees2} shows that
$$A_\ve(v_\ve) =  \frac13 \hbox{Weyl}_{g_\ve}(x_\ve)_{ipqj}x^p x^q \partial^2_{ij} v_\ve + O \Bigg( \frac{\mu_\ve^{\frac{n-2}{2}}}{(\mu_\ve + |x|)^{n-3}} \Bigg). $$
Integrating the latter against $x^k \partial_k v_\ve + \frac{n-2}{2} v_\ve$ thus gives, using again \eqref{estimees2}, that 
\begin{equation} \label{term:weyl}
\begin{aligned}
&\int_{B(0,\delta)} \left(  x^k \partial_k v_\ve + \frac{n-2}{2} v_\ve\right)   A_\ve(v_\ve) dx \\
&=  \frac13 \hbox{Weyl}_{g_\ve}(x_\ve)_{ipqj} \int_{B(0,\delta)} x^p x^q \partial^2_{ij} v_\ve \left(  x^k \partial_k v_\ve + \frac{n-2}{2} v_\ve\right)  dx\\
& +\left\{ \begin{array}{cc}
o(\mu_\ve^2) &\hbox{ if }n\geq 5\\
O(\mu_\ve^2) &\hbox{ if }n=4\\
\end{array}\right.
\end{aligned}
\end{equation}
where again repeated indices are summed over. We now separate between the cases $n=3$, $n=4$ and $n\geq 5$ and prove Theorems \ref{maintheo} and \ref{maintheo2}.

\subsection{The case $n\geq 5$.} With $\hat{v}_\ve$ and $\hat{h}_\ve$ as in \eqref{cv:hve} and \eqref{def:hat:h} a change of variables gives
$$\int_{B(0,\delta)} \hat{h}_\ve dx =\me^2\int_{B(0,\me^{-1}\delta)} \hat{h}_\ve(\me x) \hat{v}_\ve^2 dx.$$
It follows from \eqref{estimees2} that $|\hat{v}_\ve(x)|\leq C(1+|x|)^{2-n}\in L^2(\rn)$ for $n\geq 5$. Therefore, using \eqref{cv:hve} and Lebesgue's convergence theorem, we get that \eqref{poho:78} yields
\ben \label{pohofinal3}
\bal 
&\int_{B(0,\delta)} \left(  x^k \partial_k v_\ve + \frac{n-2}{2} v_\ve \right)   \hat{h}_\ve v_\ve dx \\
 & \quad = - \left(  h(x_0) - \frac{n-2}{4(n-1)} S_g(x_0) \right)   \int_{\R^n} V^2dx \cdot \mu_\ve^2 +o(\mu_\ve^2) 
\eal \een
as $\ve \to 0$, where $x_0 = \lim_{\ve \to 0} x_\ve$. This limit exists up to passing to a subsequence as $\ve \to 0$, which will be implicit from now on. Note that the integral $\int_{\R^n} V^2dx$ is finite by \eqref{eq:decayV} since $n \ge 5$.

\smallskip\noindent We now deal with \eqref{term:weyl}, still for $n\geq 5$. By construction $g_\ve$ converges towards $\left( \exp_{x_0}^{g_{x_0}} \right)  ^* g_{x_0}$ in $C^1(\overline{B(0,\delta)})$ as $\ve \to 0$, so that $ \hbox{Weyl}_{g_\ve}(x_\ve)_{ipqj} \to \hbox{Weyl}_{g_{x_0}}(x_0)_{ipqj}$ as $\ve \to 0$, where the components of $\hbox{Weyl}_{g_{x_0}}$ are measured in the exponential chart of $g_{x_0}$ at $x_0$. Since $g_{x_0}$ is conformal to $g$ and since $\Lambda_{x_0}(x_0) = 1$ we have $g_{x_0}(x_0) = g(x_0)$, and by the conformal invariance of the Weyl tensor we also have 
$$ \hbox{Weyl}_{g_{x_0}}(x_0)_{ipqj} =  \hbox{Weyl}_{g}(x_0)_{ipqj},$$
where the components of  $\hbox{Weyl}_{g}(x_0)$ in the right-hand side are measured in the exponential chart of $g$ at $x_0$. By \eqref{estimees2} and the dominated convergence theorem we therefore obtain, with \eqref{term:weyl}, that 
 \ben  \label{pohofinal4}
\bal
& \int_{B(0,\delta)} \left(  x^k \partial_k v_\ve + \frac{n-2}{2} v_\ve\right)   A_\ve(v_\ve) dx \\
 & = \frac13 \hbox{Weyl}_{g}(x_0)_{ipqj} \int_{\R^n}  x^p x^q \partial^2_{ij} V\left(  x^k \partial_k V+ \frac{n-2}{2} V \right)  dx \cdot \mu_\ve^2 + o( \mu_\ve^2) \\
 & = - \frac{(n-2)^2}{4n}  \left( \int_{\R^n} |V|^{\crit }dx \right) \hbox{Weyl}_g\otimes B \cdot \mu_\ve^2 + o( \mu_\ve^2)
  \eal
\een
as $\ve \to 0$, where we used the Definition \ref{def:tensor:B} of $\hbox{Weyl}_g\otimes B$. Using \eqref{Sec2:19} and plugging \eqref{pohofinal2}, \eqref{pohofinal21}, \eqref{poho:78}, \eqref{pohofinal3}, \eqref{pohofinal4}  in \eqref{pohofinal1} proves \eqref{condnecessaire:5} and  concludes the proof of Theorem \ref{maintheo}.

\subsection{The case $n=4$.} It follows from \eqref{eq:expansion},  \eqref{Ve3}, \eqref{propLambda} and \eqref{estimees2} that
\begin{equation}\label{asymp:ue}
\begin{aligned}
& \lim_{R \to + \infty} \limsup_{\ve\to 0}\sup_{x\in  B(0,  R^{-1}) \backslash B(0, R \mu_\ve)}\left|\frac{|x|^{2}v_\ve(x)}{\me}-\lambda(V)\right| \text{ and }  \\
& \lim_{R \to + \infty} \limsup_{\ve\to 0}\sup_{x\in  B(0,  R^{-1}) \backslash B(0, R \mu_\ve)} \left|\frac{|x|^3 \nabla v_\ve(x)}{\mu_\ve} + 2 \lambda(V)\frac{x}{|x|} \right|=0.
\end{aligned}
\end{equation}
For any fixed $R>0$ we have, by \eqref{estimees2} and \eqref{asymp:ue}, 
$$ \int_{B(0,  \delta)}  \hat{h}_\ve v_\ve^2 dx = O(R^4 \mu_\ve^2) + \big( \hat{h}_\ve(0) + \ve_R \big)\mu_\ve^2 \int_{ B(0,  R^{-1}) \backslash B(0, R \mu_\ve)} \frac{\lambda(V)^2}{|x|^4}dx $$
where $\ve_R \to 0$ as $R \to + \infty$. As a consequence, 
$$ \int_{B(0,\delta)} \hat{h}_\ve v_\ve^2 dx= \Big(\lambda(V)^2 \omega_3 \hat{h}_\ve(0)+o(1) \Big) \me^{2}\ln\frac{1}{\me} $$
as $\ve \to 0$. By \eqref{def:hat:h} and going back to \eqref{poho:78} we therefore have, when $n=4$,
\ben \label{pohofinal3:4}
\bal 
&\int_{B(0,\delta)} \left(  x^k \partial_k v_\ve +  v_\ve \right)   \hat{h}_\ve v_\ve dx \\
 & \quad = -\lambda(V)^2\omega_{3}\left(h(x_0)-\frac{1}{6}S_g(x_0)\right)\me^{2}\ln\frac{1}{\me} +o \big( \me^{2}\ln\frac{1}{\me} \big)
\eal \een
as $\ve \to 0$, where as before we have let $x_0 = \lim_{\ve \to 0} x_\ve$. We are left with the term \eqref{term:weyl}. Integrating by parts and using \eqref{estimees2} shows that
\begin{equation} \label{weyl:dim4:1} 
\begin{aligned}
& \int_{B(0,\delta)} x^p x^q \partial^2_{ij} v_\ve \left(  x^k \partial_k v_\ve + \frac{n-2}{2} v_\ve\right) \, dx \\
& =   \int_{B(0, \delta)} x^p x^q \partial_i v_\ve \partial_j v_\ve \, dx + O(\mu_\ve^2). 
\end{aligned} 
\end{equation}
Arguing as above we get by \eqref{estimees2} and \eqref{asymp:ue} that 
$$ \begin{aligned}
   \hbox{Weyl}_{g}(x_0)_{ipqj}& \int_{B(0, \delta)}  x^p x^q \partial_i v_\ve \partial_j v_\ve \, dx \\
 & = O(R^4 \mu_\ve^2) + \hbox{Weyl}_{g}(x_0)_{ipqj}  \int_{B(0,R^{-1})\backslash B(0, R \me)} x^p x^q \partial_i v_\ve \partial_j v_\ve dx \, \\
 & = 4 \lambda(V)^2 \mu_\ve^2  \hbox{Weyl}_{g}(x_0)_{ipqj} \int_{B(0,R^{-1})\backslash B(0, R \me)} x^p x^q \frac{x_i}{|x|^4} \frac{x_j}{|x|^4} \, dx \\
 & \quad + O(R^4 \mu_\ve^2) + O\Big(\ve_R \mu_\ve^2 \ln \frac{1}{\mu_\ve}\Big) \\
 & = O(R^4 \mu_\ve^2) + O\Big(\ve_R \mu_\ve^2 \ln \frac{1}{\mu_\ve}\Big)\\
\end{aligned} $$
where as before $\lim_{R \to + \infty} \ve_R = 0$ and where the last term involving $\hbox{Weyl}_g(x_0)$ vanishes by antisymmetry of $\hbox{Weyl}_g(x_0)$. Going back to \eqref{term:weyl} with \eqref{weyl:dim4:1} we have proven that 
\begin{equation}  \label{weyl:dim4:2} 
 \int_{B(0,\delta)} \left(  x^k \partial_k v_\ve + \frac{n-2}{2} v_\ve\right)   A_\ve(v_\ve) dx \\ = o \Big( \mu_\ve^2 \ln \frac{1}{\mu_\ve} \Big) 
 \end{equation}
as $\ve \to 0$. Remark that by \eqref{deflambda} we have $2 \omega_3 \lambda(V) = \int_{\R^4} |V|^2 V dx$. With this observation, using \eqref{Sec2:19} and plugging  \eqref{pohofinal2}, \eqref{pohofinal21}, \eqref{poho:78}, \eqref{pohofinal3:4} and  \eqref{weyl:dim4:2}  in \eqref{pohofinal1}, we get \eqref{condnecessaire:4}, which proves Theorem \ref{maintheo2} for $n=4$.

\subsection{The case $n=3$.}\label{sec:3D}
We are left with the case $n=3$. Using \eqref{pohofinal2} and \eqref{poho:35} the Pohozaev identity \eqref{pohofinal1} writes as

\begin{equation} \label{poho:n3}
\begin{aligned}
&\int_{\partial B(0,\delta)} \left(  \frac{1}{2} v_\ve \partial_{\nu} v_\ve - \frac{\delta}{2} |\nabla v_\ve|_\xi^2 + \delta (\partial_\nu v_\ve)^2 + \frac{1}{p_\ve} \Lambda_{x_\ve}^{p_\ve-6} \delta |v_\ve|^{p_\ve} \right)   d\sigma \\
&= \frac{1}{12}   (6 - p_\ve)\int_{\R^n} |V|^{6}dx + o\left(  |p_\ve-6 |\right)  \\
&+ \int_{B(0,\delta)} \big(  x^k \partial_k v_\ve + \frac{1}{2} v_\ve\big)   \tilde{A}_\ve(x)\, dx
\end{aligned}
\end{equation}
with 
$$\tilde{A}_\ve(x):= \left(  \Delta_{g_\ve} - \Delta_\xi\right)   v_\ve + \Big(\frac{1}{8} S_{g_\ve}+ \hat{h}_\ve \Big) v_\ve, 
$$
where $\hat{h}_\ve$ is given by \eqref{def:hat:h} and where $v_\ve$ is defined in \eqref{def:ve}. First, by using \eqref{estimees2} together with \eqref{est:diff}, we see that 
$$ |\tilde{A}_\ve(x)| \lesssim \frac{\mu_\ve^{\frac12}}{\mu_\ve + |x|} \quad \text{ for } x \in B(0, \delta). $$
As a consequence, straightforward computations show that 
\begin{equation} \label{poho:n31}
\left|  \int_{B(0,\delta)} \big(  x^k \partial_k v_\ve + \frac{1}{2} v_\ve\big)   \tilde{A}_\ve(x)\, dx
 \right| = O \big( \delta \mu_\ve \big)
\end{equation}
as $\ve \to 0$. Independently, \eqref{estimees2} shows that 
\begin{equation} \label{poho:n32}
\left| \int_{\partial B(0,\delta)} \frac{1}{p_\ve} \Lambda_{x_\ve}^{p_\ve-6} \delta |v_\ve|^{p_\ve}   d\sigma \right| = o(  \mu_\ve)
\end{equation}
as $\ve \to 0$. We now compute the remaining boundary term. It follows from \eqref{Ve3}, \eqref{propLambda}, \eqref{estimees2}, \eqref{eq:neweq}  and standard elliptic theory that
\begin{equation}\label{lim:hua:G}
\lim_{\ve\to 0}\frac{v_\ve(x)}{\me ^{\frac{1}{2}}} =\lambda  \hat{G}(x) \quad \text{ in } C^{2 }_{loc}(B_{2\delta}(0) \backslash \{0\}),
\end{equation} 
where we have let 
\begin{equation} \label{def:lambdan3}
\lambda :=4 \pi \lambda(V)
\end{equation}
and
\begin{equation*}
\hat{G}(x):= \frac{G_h(x_0,\hbox{exp}_{x_0}^{g_{x_0}}(x))}{\Lambda_{x_0}(\hbox{exp}_{x_0}^{g_{x_0}}(x))} \quad \hbox{ for all }x\in B_{2\delta}(0) \backslash \{0\}.
\end{equation*}
Here we recall that $\lambda(V)$ is the first term in the expansion at infinity of $V$ (see \eqref{eq:expansion}), $G_h$ is the Green's function for the operator $\Delta_g+h$ in $M$, $\Lambda_{x_0}$ is as in \eqref{propLambda} and $\hbox{exp}_{x_0}^{g_{x_0}}$ is the exponential map at $x_0$ with respect to $g_{x_0}$. 
It first follows from \eqref{lim:hua:G} that
\begin{equation} \label{poho:n33}
\begin{aligned} \lim_{\ve\to 0}& \me^{-1}\int_{\partial B(0,\delta)} \left(  \frac{1}{2} v_\ve \partial_{\nu} v_\ve - \frac{\delta}{2} |\nabla v_\ve|_\xi^2 + \delta (\partial_\nu v_\ve)^2  \right)   d\sigma\\
=&\lambda^2\int_{\partial B(0,\delta)} \left(  \frac{1}{2} \hat{G} \partial_{\nu} \hat{G} - \frac{\delta}{2} |\nabla \hat{G}|_\xi^2 + \delta (\partial_\nu \hat{G})^2 \right)   d\sigma.
\end{aligned}
\end{equation}
By \eqref{propLambda} and \eqref{DLmetriqueconf}, and since $G_h(x_0, \cdot)$ satisfies the expansion \eqref{DL:foncgreen}, there exists a continuous function $\hat{\beta}: B(0,2 \delta) \to \R$ such that  
\begin{equation*}
\hat{G}(x)= \frac{1}{4\pi |x|}+\hat{\beta}(x) \quad \hbox{ for all }x\in B_{2\delta}(0) \backslash \{0\}. 
\end{equation*}
It follows from Definition \ref{def:mass} that $\hat{\beta}(0) = m_{h}(x_0)$ and \eqref{est:C1:masse} and \eqref{DLmetriqueconf} show that we have 
\begin{equation*}
|\hat{\beta}(x)|\leq C\, ;\, |\nabla \hat{\beta}(x)|\leq C(1+|\ln |x||) 
\quad \hbox{ for all }x\in B_{2\delta}(0) \backslash \{0\}.
\end{equation*}
Straightforward computations then show that 
\begin{equation} \label{poho:n34}
\int_{\partial B(0,\delta)} \left(  \frac{1}{2} \hat{G} \partial_{\nu} \hat{G} - \frac{\delta}{2} |\nabla \hat{G}|_\xi^2 + \delta (\partial_\nu \hat{G})^2  \right)   d\sigma 
 = -\frac{1}{2}m_h(x_0)+\ve_\delta, 
\end{equation}
where $\lim_{\delta \to 0} \ve_\delta = 0$. Using \eqref{Sec2:19}, plugging \eqref{poho:n31}, \eqref{poho:n32}, \eqref{poho:n33} and \eqref{poho:n34} in \eqref{poho:n3} and using \eqref{def:lambdan3} finally shows that 
\begin{equation} \label{poho:n35}
\lim_{\ve\to 0}\frac{6 - p_\ve}{\me} =\frac{-96\pi^2\lambda(V)^2 m_{h}(x_0)}{   \int_{\R^3} |V|^{6}dx}. 
\end{equation}
This proves \eqref{condnecessaire:3} and concludes the proof of Theorem \ref{maintheo2}. $\square$

\section{A non-blowup situation}\label{sec:final}
In this last Section we show how Theorem \ref{maintheo} can be used to rule out the existence of one-bubble blowing-up solutions of equations like:
\begin{equation} \label{eq:dmw}
 \Delta_g u_\ve + h u_\ve = |u_\ve|^{\crit-2-\ve} u_\ve
 \end{equation}
at a given point, in some cases. In \cite{DengMussoWei}, Deng-Musso-Wei have proposed a creative approach to construct families of sign-changing solutions $(u_\ve)_{\ve >0}$ to the equation \eqref{eq:dmw} that blow-up like a single bubble as in \eqref{one:bubble}. The bubble that the authors choose in \cite{DengMussoWei} is one of the symmetric bubbles constructed in \cite{DelPinoMussoPacardPistoia1}. In their arguments, Deng-Musso-Wei have introduced the function $$\vp_\ell: = h - c_n \Big( 1 + \frac{n-4}{3n}\ell  \Big) S_g,$$
where $\ell$ is a parameter associated to the familie of bubbles in \cite{DelPinoMussoPacardPistoia1}. The function $\vp_\ell$ plays a crucial role in the results of \cite{DengMussoWei}. We show below how, under some conditions, it is possible to use $\vp_\ell$ to rule out one-bubble blow-up.

\medskip\noindent We let $(M,g)$ be a smooth, closed, connected Riemannian manifold of dimension $n \ge 5$ that is locally conformally flat and has constant scalar curvature equal to $-1$. This is for instance satisfied if $(M,g)$ is a closed hyperbolic manifold. We define  
$$c_n = \frac{n-2}{4(n-1)}\hbox{ and  we let }1<t<1+\frac{n-4}{3n}.$$
Let $\xi_0 \in M$ be any point.

\smallskip\noindent{\bf Step \ref{sec:final}.1:} We claim that there exists $h \in C^\infty(M)$ such that 
\begin{equation} \label{hyph}
\left \{ \begin{aligned}
& h(\xi_0) = - tc_n \\
& \nabla h(\xi_0) = 0 \\
& \nabla^2 h(x_0) \text{ is positive-definite } \\
& \Delta_g + h \text{ is coercive in } H^1(M).
\end{aligned} \right.
\end{equation}
Under \eqref{hyph}, $\xi_0$ is in particular a local strict minimum point of $h$. 

\smallskip\noindent We prove the Claim. Let for instance  $\chi \in C^\infty_c(\R^n)$ be a cutoff function with $0 \le \chi \le 1$,  $\chi(x)=1$ in $B(0, 1)$ and $\chi(x)=0$ in $\R^n \backslash B(0, 2)$. Let, for $0<\delta < \frac{i_g(M)}{2}$ and $x \in M$, 
$$ h_{\delta}(x) =  1 - \chi\Big( \frac{d_g(\xi_0, x)}{\delta} \Big) +  \chi\Big( \frac{d_g(\xi_0, x)}{\delta} \Big)  \Big( - tc_n + d_g(\xi_0, x)^2 \Big). $$
We claim that $h_\delta$ satisfies \eqref{hyph} for $\delta$ small enough (but fixed). Indeed, in $B_g(\xi_0, \delta)$, we have $h_\delta(x) =  - tc_n + d_g(\xi_0, x)^2$, and straightforward computations show that 
$$ \Vert h_{\delta} - 1 \Vert_{L^{\frac{n}{2}}(M)} \le C \delta^2, $$   
so that $\Delta_g + h_\delta$ is coercive for $\delta$ small enough. This proves the claim.

\smallskip\noindent{\bf Step \ref{sec:final}.2:} We now let $h \in C^\infty(M)$ satisfy \eqref{hyph}. For $\ell \ge 1$ and $\xi \in M$ we define
$$\begin{aligned}
 \vp_\ell(\xi) & = h(\xi) - c_n \Big( 1 + \frac{n-4}{3n}\ell  \Big) S_g(\xi)  = h(\xi) + c_n \Big( 1 + \frac{n-4}{3n}\ell \Big), 
  \end{aligned} $$
We claim that
 \begin{equation}\label{ppty:phi:l}
 \left\{\begin{array}{l}
 \Delta_g+h\hbox{ is coercive};\\
\vp_\ell(\xi_0) >0\hbox{ for all }l\geq 1;\\
\xi_0\hbox{ is a $C^1-$stable critical point of }\vp_\ell\hbox{ \emph{uniformly in }}\ell \ge 1.
\end{array}\right\}\end{equation}
We prove the claim. We have that
$$ \vp_\ell(\xi_0) =c_n \Big( 1 + \frac{n-4}{3n}\ell -t \Big)\geq c_n \Big( 1 + \frac{n-4}{3n} -t \Big) >0 $$
for any $\ell \geq 1$. It is easily seen with \eqref{hyph} that, for any $\ell \ge 1$, $\xi_0$ is a critical point of $\vp_\ell$ satisfying $\vp_\ell(\xi_0) >0$
 and $\vp_\ell(\xi_0) \to + \infty$ as $\ell \to + \infty$. Since $\xi_0$ has non-zero degree, it is a $C^1$-stable critical point of $h$ (following the terminology of \cite{LiStable}). That is, there is $r_0 >0$ that satisfies the following: for any $0< r \le r_0$ there exists $\alpha >0$ such that any $\tilde h \in C^1(\overline{B_g(\xi_0, r)})$ satisfying  $\Vert \tilde h - h \Vert_{C^1(\overline{B_g(\xi_0, r)})} \le \alpha$ has at least one critical point in $B_g(\xi_0, r)$. By definition of $\vp_\ell$ we observe that $\xi_0$ then remains a $C^1$-stable critical point of $\vp_\ell$   \emph{uniformly in $\ell \ge 1$:} that is, for any $0 < r \le r_0$ and $\alpha$ as above, and for any  $\ell \ge 1$, any function $\tilde \vp \in C^1(\overline{B_g(\xi_0, r)})$ satisfying  $\Vert \tilde \vp - \vp_\ell \Vert_{C^1(\overline{B_g(\xi_0, r)})} \le \delta$ has at least one critical point in $B_g(\xi_0, r)$. This proves \eqref{ppty:phi:l}.

\medskip\noindent{\bf Step \ref{sec:final}.3:} We claim that there is no family  $(u_\ve)_{0< \ve \le 1}\in C^2(M)$ of solutions of \eqref{eq:dmw} such that 
\begin{equation}\label{one:bubble:bis}
u_\ve =B_\ve+o(1)\hbox{ in }H^1(M),
\end{equation}
where $B=(B_\ve)_{\ve>0}$ is a bubble centered at $(\xi_{\ve})_\ve\in M$, $\lim_{\ve\to 0}\xi_\ve=\xi_0$ as in Definition \ref{def:bubble}.

\smallskip\noindent We prove the claim by contradiction. Since $(M,g)$ is locally conformally flat, then the Weyl tensor vanishes. It then follows from Theorem  \ref{maintheo} of the present paper that if there is blow-up like in \eqref{one:bubble:bis}, then equation \eqref{condnecessaire:5} in particular tells us that 
\begin{equation} \label{contra}
  h(\xi_0) + c_n = h(\xi_0) - c_n S_g(\xi_0) \ge 0. 
  \end{equation}
But this is an obvious contradiction with \eqref{hyph} since $h(\xi_0) = - t c_n$ and $t >1$. This proves the claim.
 
\bibliographystyle{alpha}
\bibliography{biblio}

\end{document}